\newtheorem{theorem}{Theorem}
\newtheorem{corollary}[theorem]{Corollary}
\newtheorem{lemma}[theorem]{Lemma}
\newtheorem{remark}[theorem]{Remark}
\begin{document}
\title{Stabilized mixed $hp$-BEM for frictional contact problems in linear elasticity}
\author{Lothar Banz\thanks{Institute of Applied Mathematics, Leibniz University Hannover, Welfengarten 1, 30167 Hannover, Germany, email: banz@ifam.uni-hannover.de} \and Heiko Gimperlein\thanks{Maxwell Institute for Mathematical Sciences and Department of Mathematics, Heriot-Watt University, EH14 4AS Edinburgh, United Kingdom} \and Abderrahman Issaoui\thanks{Institute of Applied Mathematics, Leibniz University Hannover, Welfengarten 1, 30167 Hannover, Germany, email: issaoui@ifam.uni-hannover.de} \and Ernst P.~Stephan\thanks{Institute of Applied Mathematics, Leibniz University Hannover, Welfengarten 1, 30167 Hannover, Germany, email: stephan@ifam.uni-hannover.de }}
\date{}

\maketitle \vskip 0.5cm
\begin{abstract}
We investigate $hp$-stabilization for variational inequalities and boundary element methods based on the approach introduced by Barbosa and Hughes for finite elements. Convergence of a stabilized mixed boundary element method is shown for unilateral frictional contact problems for the Lam\'{e} equation. { Without stabilization, the inf-sup constant need not be bounded away from zero for natural discretizations, even for fixed $h$ and $p$}. Both a priori and a posteriori error estimates are presented in the case of Tresca friction, for discretizations based on Bernstein or Gauss-Lobatto-Lagrange polynomials as test and trial functions. We also consider an extension of the a posteriori estimate to Coulomb friction. Numerical experiments underline our theoretical results.
\end{abstract}

\section{Introduction}
\label{sec:Introduction}
Many mechanical applications can be modeled by frictional contact problems. These consist of a differential equation balancing the forces within the object at hand and special contact and friction constraints on one part of the object's boundary. The latter significantly complicates the numerical analysis and computations.\\
Following the seminal FEM-paper \cite{barbosahughes1991}, we investigate the concept of $hp$-stabilization for variational inequalities and boundary elements. With this approach we show convergence of the mixed method which in the non-stabilized approach is not assured as { the inf-sup constant need not be bounded away from zero for natural discretizations, even for fixed $h$ and $p$}. We present a priori and a posteriori error estimates for the $hp$-stabilized mixed boundary element method for Tresca problems. The use of Bernstein polynomials or Gauss-Lobatto-Lagrange polynomials as test and trial functions proves convenient. Also an extension to Coulomb friction is discussed.\\
The paper is structured as follows. In Section~\ref{sec:Amixedboundaryintegralformulation} we introduce a mixed boundary element method with the help of the Poincar\'{e}-Steklov operator which maps the displacement $u$ on the boundary into the boundary traction $-\lambda$. The unique existence of a solution $(u,\lambda)$ of the mixed formulation of the original Tresca friction contact problem is based on the coercivity of the underlying bilinear form  $\left\langle S\cdot,\cdot \right\rangle$
 on the trace space $\tilde{H}^{1/2}(\Gamma_\Sigma) $ and the inf-sup condition for $\lambda$ in the dual space   $H^{-1/2}(\Gamma_C)$ (see Theorem~\ref{thm:infsup}). In Section~\ref{sec:stabalizedDiscretization} we discretize the mixed formulation in suitable piecewise polynomial subspaces of $\tilde{H}^{1/2}(\Gamma_\Sigma) $ and $H^{-1/2}(\Gamma_C)$. On a locally quasi-uniform mesh we use linear combinations of affinely transformed Bernstein polynomials or Gauss-Lobatto-Lagrange polynomials, imposing in both cases additional side conditions which reflect the constraints of non-penetration and stick-slip of the original contact problem. Based on these hp-boundary element spaces we present a stabilized mixed method with stabilization parameter $\gamma|_{E} \sim h_{E}p_{E}^{-2}$ for elements E of the subdivision $\mathcal{T}_h$  of $\Gamma_\Sigma$ into straight line segments. As in \cite{hildrenard2010} for the h-version FEM, the stabilized discrete mixed scheme admits a unique solution
   $(u^{hp},\lambda^{kq})$. We derive a priori error estimates for the Galerkin error in the displacement $u$ and the Lagrange multiplier $\lambda$ showing improved convergence rates for higher polynomial degrees $p$, $q$. Our results (Theorem~\ref{thm:Lame:apriori2} and Theorem~\ref{thm:Lame:apriori22}) include the $h$-version which is considered for lowest order test and trial functions in the FEM by \cite{hild2009residual,hildrenard2007}. In Section~\ref{sec:Aposteriorierrorestimations} we derive an a posteriori error estimate of residual type. After discussing implementational challenges
     in Section~\ref{sec:Implementation} we give an extension of our approach to Coulomb friction in Section~\ref{sec:CoulombFriction} by suitably modifying the test and ansatz spaces.
     Finally, our numerical experiments in Section~\ref{sec:NumericalExperiments} underline our theoretical results. They clearly show that the classical hp-stabilization technique extends to variational inequalities, here for contact problems, handled with boundary integral equations and hp-BEM.

\section{A mixed boundary integral formulation}
\label{sec:Amixedboundaryintegralformulation}

Let $\Omega \subset \mathbb{R}^2$ be a bounded polygonal domain with boundary $\Gamma$ and outward unit normal $n$. Furthermore, let $\bar{\Gamma}=\bar{\Gamma}_D \cup \bar{\Gamma}_N \cup \bar{\Gamma}_C$ be decomposed into
non-overlapping homogeneous Dirichlet, Neumann and contact boundary parts with $\bar{\Gamma}_D \cap \bar{\Gamma}_C = \emptyset$ for simplicity. For a given gap function $g \in H^{1/2}(\Gamma_C)$, friction threshold $0< \mathcal{F} \in L^2(\Gamma_C)$, Neumann data $t\in \tilde{H}^{-1/2}(\Gamma_N)$ and elasticity tensor $\mathcal{C}$ the considered Tresca frictional contact problem is to find a function $u \in H^1_{\Gamma_D}(\Omega):=\left\{v \in H^1(\Omega):\ v|_{\Gamma_D}=0\right\}$ such that
\begin{subequations}  \label{prob:strong_formulation}
\begin{alignat}{2}
-\operatorname{div} \sigma(u)&=0 & \quad & \text{in } \Omega\\
  \sigma(u)&=\mathcal{C}:\epsilon(u) & \quad & \text{in } \Omega \label{eq:prog:materialLaw} \\
	u&=0 & \quad & \text{on } \Gamma_D \\
  \sigma(u)n&=t & \quad & \text{on } \Gamma_N \\
  \sigma_n\leq 0,\ u_n\leq g,\ \sigma_n(u_n-g)&=0 & \quad & \text{on } \Gamma_C\\
  \left\| \sigma_t \right\|  \leq \mathcal{F},\ \sigma_tu_t+\mathcal{F}\left\| u_t \right\| &=0 & \quad & \text{on } \Gamma_C. \label{eq:prog:FrictionLaw}
\end{alignat}
\end{subequations}
Here, $\sigma_n$, $\sigma_t$ are the normal and tangential components of $\sigma(u)n$, respectively and \eqref{eq:prog:materialLaw} describes Hooke's law with the linearized strain tensor $\epsilon(u)=\frac{1}{2} \left( \nabla u + \nabla u^\top \right)$. Often \eqref{eq:prog:FrictionLaw} is written in the form
\begin{align}
 \left\| \sigma_t \right\|  \leq \mathcal{F}, \qquad
 \left\| \sigma_t \right\|  < \mathcal{F} \Rightarrow u_t=0, \qquad
 \left\| \sigma_t \right\|  = \mathcal{F} \Rightarrow \exists \alpha \geq 0 : u_t=-\alpha \sigma_t.
\end{align}
Testing \eqref{prob:strong_formulation} with $v_\Omega \in K_\Omega:=\left\{v_\Omega \in H^1_{\Gamma_D}(\Omega): (v_\Omega)_n \leq g \text{ a.e.~on } \Gamma_C \right\}$ and introducing the friction functional $j(v):=\int_{\Gamma_C} \mathcal{F} \left\|v_t\right\| ds$ yields the (domain) variational inequality formulation:
\begin{align}
 u_\Omega \in K_\Omega:\ \left(\sigma(u_\Omega),\epsilon(v_\Omega-u_\Omega)\right)_{0,\Omega} +j(v_\Omega)-j(u_\Omega) \geq \left\langle t,v_\Omega-u_\Omega\right\rangle_{\Gamma_N} \quad \forall v_\Omega \in K_\Omega
\end{align}
where $\left(u,v \right)_{0,\Omega}=\int_{\Omega}u v\;dx$ and $\left\langle t,v\right\rangle_{\Gamma_N}=\int_{\Gamma_N} t v\; ds$ are defined by duality.

{Boundary integral formulations can be advantageous for problems with non-linear boundary conditions and with no source terms in $\Omega$.} For that let
\begin{alignat}{2}
V\mu (x) &= \int_\Gamma G(x,y) \mu(y) ds_y, &\qquad Kv(x)&=\int_\Gamma  \left(\mathcal{T}_y G(x,y)\right)^\top v(y) ds_y \label{eq:lame:integoperat1}\\
K^\top\mu (x) &= \mathcal{T}_x\int_\Gamma G(x,y) \mu(y) ds_y, &\qquad Wv(x)&=-\mathcal{T}_x \int_\Gamma  \left(\mathcal{T}_y G(x,y)\right)^\top v(y) ds_y \label{eq:lame:integoperat2}
\end{alignat}
be the single layer potential $V$, double layer potential $K$, adjoined double layer potential $K^\top$ and hypersingular integral operator $W$ with the fundamental solution for the Lam\'{e} equation in $\mathbb{R}^2$
\begin{align*}
G(x,y)=\frac{\lambda +3\mu}{4\pi \mu(\lambda+2\mu)}\left\{\log |x-y|I + \frac{\lambda + \mu}{\lambda +3\mu}\frac{(x-y)(x-y)^\top}{|x-y|^2}\right\}
\end{align*}
and traction operator $(\mathcal{T}u)_i=\lambda n_i \operatorname{div} u + \mu \partial_n u_i + \mu\left\langle \frac{\partial u}{\partial x_i},n\right\rangle$, (see \cite{costabelstephan1990}). The Poincar\'e-Steklov operator $S:=W+(K+\frac{1}{2})^\top V^{-1}(K+\frac{1}{2})$, which is $H^{\frac{1}{2}}(\Gamma)$-continuous and $\tilde{H}^{\frac{1}{2}}(\Gamma_\Sigma)$-coercive is a Dirichlet-to-Neumann mapping \cite{carstensen1995adaptive}.
\begin{align*}
\left\langle Su,v\right\rangle = \left\langle \partial_n u,v\right\rangle = \left(\sigma(u_\Omega),\epsilon(v_\Omega)\right)_{0,\Omega}
\end{align*}
 Hence the (domain) variational inequality immediately yields the (boundary) variational inequality formulation: Find $u \in K$ with $K:=\left\lbrace v\in \tilde{H}^{1/2}(\Gamma_\Sigma): u_n \leq g \text{~a.e.~on~} \Gamma_C \right\rbrace$ such that
\begin{align} \label{eq:VarIneq}
\left\langle Su,v-u \right\rangle_{\Gamma_\Sigma} + j(v)-j(u) \geq \left\langle t,v-u \right\rangle_{\Gamma_N} \quad \forall v\in K
\end{align}
where $\bar{\Gamma}_\Sigma:=\bar{\Gamma}_N \cup \bar{\Gamma}_C$. It is well known, e.g.~\cite[Theorems 3.13 and 3.14]{Chernov2006}, \cite{ChernovmaischakStephan2008} that there exists a unique solution to \eqref{eq:VarIneq}. Since neither $K$ is trivial to discretize nor is the non-differentiable friction function $j(v)$ easy to handle  it may be favorable to use an equivalent mixed formulation. To do so, let
\begin{align}
M^+(\mathcal{F}):=\left\lbrace \mu \in \tilde{H}^{-1/2}(\Gamma_C): \left\langle \mu,v\right\rangle_{\Gamma_C} \leq \left\langle \mathcal{F},\left\|v_t\right\| \right\rangle_{\Gamma_C} \forall v \in \tilde{H}^{1/2}(\Gamma_\Sigma), v_n \leq 0 \right\rbrace
\end{align}
be the set of admissible Lagrange multipliers, in which the representative $\lambda=-\sigma(u)n$ is sought. Then, the mixed method is to find the pair $(u,\lambda) \in \tilde{H}^{1/2}(\Gamma_\Sigma) \times M^+(\mathcal{F})$ such that (see \cite{Banz2014BEM})
\begin{subequations} \label{eq:MixedProblem}
\begin{alignat}{2}
 \left\langle Su,v \right\rangle_{\Gamma_\Sigma} + \left\langle \lambda,v \right\rangle_{\Gamma_C} &= \left\langle t,v \right\rangle_{\Gamma_N} &\quad &\forall v\in \tilde{H}^{1/2}(\Gamma_\Sigma) \label{eq:WeakMixedVarEq} \\
\left\langle \mu -\lambda,u \right\rangle_{\Gamma_C} & \leq \left\langle g,\mu_n-\lambda_n \right\rangle_{\Gamma_C} &\quad &\forall \mu \in M^+(\mathcal{F}) . \label{eq:ContContactConstraints}
\end{alignat}
\end{subequations}

\begin{theorem}\label{thm:infsup}
There hold the following results:

\begin{enumerate}
  \item The inf-sup condition is satisfied with a constant $\tilde{\beta} >0$, i.e.
	\begin{align}\label{eq:lame:infsup}
	  \tilde{\beta} \left\| \mu \right\|_{\tilde{H}^{-1/2}(\Gamma_C)} \leq \sup_{v \in \tilde{H}^{1/2}(\Gamma_\Sigma) \setminus \left\{0\right\}} \frac{\left\langle \mu, v \right\rangle_{\Gamma_C} }{ \left\|v\right\|_{\tilde{H}^{1/2}(\Gamma_\Sigma)} } \qquad \forall \mu \in \tilde{H}^{-1/2}(\Gamma_C)
	\end{align}
	\item Any solution of \eqref{eq:MixedProblem} is also a solution of \eqref{eq:VarIneq}.
	\item For the solution $u \in K$ of \eqref{eq:VarIneq} there exists a $\lambda \in M^+(\mathcal{F})$ such that $(u,\lambda)$ is a solution of \eqref{eq:MixedProblem}
	\item There exists a unique solution to \eqref{eq:MixedProblem}
\end{enumerate}
\begin{proof}
1. The inf-sup condition has been proven in \cite[Theorem 3.2.1]{Chernov2006}.\\
2. and 3. follow as in \cite[Section 3]{schroder2012posteriori} with $\left\langle Su,v\right\rangle_{\Gamma_\Sigma} = \left(\sigma(u_\Omega),\epsilon(v_\Omega)\right)_{0,\Omega}$ for volume force $f_\Omega \equiv 0$.\\
4. follows from the equivalence results 2. and 3., the inf-sup condition 1. and from the unique existence of the solution of \eqref{eq:VarIneq} proven in \cite[Theorems 3.13 and 3.14]{Chernov2006}.
\end{proof}
\end{theorem}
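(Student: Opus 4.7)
Statement~1 is the inf-sup condition which is established in Chernov's thesis and which I would simply invoke. The main work lies in the equivalence statements~2 and~3, after which part~4 follows by a short argument.

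For part~2, let $(u,\lambda)$ solve \eqref{eq:MixedProblem}. I would first recover $u \in K$ by testing \eqref{eq:ContContactConstraints} with multipliers $\mu$ of the form $\lambda + \nu$ that remain in $M^+(\mathcal{F})$ and act in the normal direction only, which forces $u_n \leq g$ a.e.~on $\Gamma_C$. To recover the variational inequality, I would substitute $v - u$ with $v \in K$ into \eqref{eq:WeakMixedVarEq} and bound $\langle \lambda, u - v\rangle_{\Gamma_C}$ from below by $j(u) - j(v)$. The latter estimate follows by applying $\lambda \in M^+(\mathcal{F})$ to the admissible direction $v - g\cdot n$ (whose normal component is non-positive by $v \in K$), combined with a complementarity-type identity of the form $\langle \lambda, u\rangle_{\Gamma_C} = \langle g, \lambda_n\rangle_{\Gamma_C} + \langle \mathcal{F}, \|u_t\|\rangle_{\Gamma_C}$ extracted from \eqref{eq:ContContactConstraints} by inserting $\mu = 0$ and a suitably rescaled perturbation of $\lambda$.

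For part~3, given the VI solution $u \in K$, I would define $\ell(v) := \langle t,v\rangle_{\Gamma_N} - \langle Su, v\rangle_{\Gamma_\Sigma}$ on $\tilde{H}^{1/2}(\Gamma_\Sigma)$. For directions $v$ with $\supp v \subset \Gamma_N$, both $u \pm v$ lie in $K$ and $j(u \pm v) = j(u)$, so the VI collapses to $\ell(v) = 0$; hence $\ell$ descends to a bounded functional $\lambda \in \tilde{H}^{-1/2}(\Gamma_C)$, whose norm is controlled through the inf-sup condition of part~1. Membership $\lambda \in M^+(\mathcal{F})$ is then obtained by inserting test functions of the form $v = u + w$ into the VI with $w_n \leq 0$, exploiting positive homogeneity and sub-additivity of $j$. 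This last step, in which the non-penetration cone and the non-smooth friction functional must be reconciled simultaneously, is where I anticipate the main technical difficulty.

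Part~4 follows at once: existence from VI existence combined with part~3, uniqueness of $u$ from VI uniqueness, and uniqueness of $\lambda$ by subtracting two copies of \eqref{eq:WeakMixedVarEq} for two candidate multipliers and invoking the inf-sup condition of part~1 on the difference.
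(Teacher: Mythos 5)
The paper disposes of all four statements by citation: the inf-sup condition and uniqueness of the variational inequality are imported from Chernov's thesis, and the equivalence of \eqref{eq:VarIneq} and \eqref{eq:MixedProblem} is imported from Schr\"oder et al.\ with the observation that $\langle Su,v\rangle_{\Gamma_\Sigma}=(\sigma(u_\Omega),\epsilon(v_\Omega))_{0,\Omega}$ replaces the FEM bilinear form. You instead sketch a self-contained argument, which is a genuinely different route; parts~1 and~4 of your plan match what the paper relies on and are fine.

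However, there are two concrete gaps in the equivalence argument. First, in part~3 you only establish that the functional $\ell$ descends to some $\lambda\in\tilde H^{-1/2}(\Gamma_C)$ and that $\lambda\in M^+(\mathcal{F})$; this gives \eqref{eq:WeakMixedVarEq} essentially by construction, but you never verify the complementarity inequality \eqref{eq:ContContactConstraints}. That inequality is \emph{not} a consequence of $\lambda\in M^+(\mathcal{F})$ and $u\in K$ alone: it encodes the two optimality identities $\langle\lambda_n,u_n-g\rangle_{\Gamma_C}=0$ and $\langle\lambda_t,u_t\rangle_{\Gamma_C}=\langle\mathcal{F},\|u_t\|\rangle_{\Gamma_C}$, and these must be extracted from the VI by additional test directions (e.g.\ $v=u\pm\epsilon w$ with $w$ supported in the contact, slip, and non-contact regions). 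Without this step the constructed pair $(u,\lambda)$ is not shown to solve \eqref{eq:MixedProblem}.

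Second, in part~2 the claim that the identity $\langle\lambda,u\rangle_{\Gamma_C}=\langle g,\lambda_n\rangle_{\Gamma_C}+\langle\mathcal{F},\|u_t\|\rangle_{\Gamma_C}$ follows from ``inserting $\mu=0$ and a suitably rescaled perturbation of $\lambda$'' is not quite right. Rescaling $\lambda$ as a whole generally leaves $M^+(\mathcal{F})$, because the tangential constraint $\|\mu_t\|\le\mathcal{F}$ is not invariant under scaling. The clean argument exploits the decoupled structure of $M^+(\mathcal{F})$: one tests with $\mu=(0,\lambda_t)$ and $\mu=(2\lambda_n,\lambda_t)$ to obtain $\langle\lambda_n,u_n\rangle_{\Gamma_C}=\langle g,\lambda_n\rangle_{\Gamma_C}$, and separately takes a supremum over $\mu_t$ with $\|\mu_t\|\le\mathcal{F}$, $\mu_n=\lambda_n$, to obtain $\langle\lambda_t,u_t\rangle_{\Gamma_C}=\langle\mathcal{F},\|u_t\|\rangle_{\Gamma_C}$. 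The normal part alone cannot give the tangential identity, so as written the bound $\langle\lambda,u-v\rangle_{\Gamma_C}\ge j(u)-j(v)$ is not justified. With these two corrections your plan would go through, but as stated it is incomplete precisely at the point you yourself flag as the technical heart.
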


\section{Stabilized mixed hp-boundary element discretization including Lagrange multiplier}
\label{sec:stabalizedDiscretization}
Let $\mathcal{T}_h$ be a subdivison of $\Gamma_\Sigma$ into straight line segments. Furthermore, let $p$ be a distribution of polynomial degrees over $\mathcal{T}_h$ which on each element specifies the polynomial degree on the reference interval. We consider the ansatz spaces
\begin{align}
\mathcal{V}_{hp}&:=\left\{v^{hp} \in \tilde{H}^{1/2}(\Gamma_\Sigma): v^{hp}|_E \in \left[\mathbb{P}_{p_E}(E)\right]^2 \forall E \in \mathcal{T}_h \right\}\ ,\\
\mathcal{V}^D_{hp}&:=\left\{\phi^{hp} \in H^{-1/2}(\Gamma_\Sigma): \phi^{hp}|_E \in \left[\mathbb{P}_{p_E-1}(E)\right]^2 \forall E \in \mathcal{T}_h \right\}\ .
\end{align}
In particular, the displacement field $u^{hp} $ is sought in $\mathcal{V}_{hp}$,  $\mathcal{V}^D_{hp}$ is
used to construct the standard approximation \cite{carstensen1995adaptive} $S_{hp}:=W+\left(K^\top+\frac{1}{2}\right)V_{hp}^{-1}\left(K+\frac{1}{2}\right)$ of $S$,
where $V_{hp}$ is the Galerkin realization of the single layer potential over $\mathcal{V}^D_{hp}$. For the discrete Lagrange multiplier let $\hat{\mathcal{T}}_k$ be an additional
subdivision of $\Gamma_C$. The discrete Lagrange multiplier  is sought in
\begin{align}
M_{kq}^+(\mathcal{F})&:= \Big\{ \mu^{kq} \in L^2(\Gamma_C): \mu^{kq}|_E = \sum_{i=0}^{q_E} \mu_i^E B_{i,q_E}^{E} \in \left[\mathbb{P}_{q_E}(E)\right]^2 \forall E\in \hat{\mathcal{T}}_k,\\ &\qquad \ (\mu^E_i)_n \geq 0, \ -\mathcal{F}(\Psi_E(iq_E^{-1})) \leq (\mu_i^E)_t \leq \mathcal{F}(\Psi_E(iq_E^{-1})) \Big\}
\end{align}
where $B_{i,q_E}^{E}$ is the $i$-th Bernstein polynomial of degree $q_E$ affinely transformed onto the interval $E$.  $\Psi_E$ is the
affine mapping from $[0,1]$ onto $E\in \hat{\mathcal{T}}_k$. Since the Bernstein polynomials are non-negative and form a partition of unity, it is
straight forward to show that $M_{kq}^+(\mathcal{F})$ is conforming, i.e.~$M_{kq}^+(\mathcal{F}) \subset M^+(\mathcal{F})$, if $\mathcal{F}$ is linear. Since $M_{kq}^+(\mathcal{F})$ is chosen
independently of $\mathcal{V}_{hp}$ it cannot be expected that the discrete inf-sup condition holds uniformly, i.e.~independently of $h$, $k$, $p$ and $q$, or at all if $\hat{\mathcal{T}}_k= \mathcal{T}_h|_{\Gamma_C}$. To circumvent the need
to restrict the set $M_{kq}^+(\mathcal{F})$, the discrete mixed formulation is stabilized analogously to \cite{barbosahughes1991} for FEM. That is, find the pair $(u^{hp},\lambda^{kq}) \in \mathcal{V}_{hp} \times M^+_{kq}(\mathcal{F})$ such that
\begin{subequations} \label{eq:DiscreteMixedProblem}
\begin{alignat}{2}
 \left\langle S_{hp}u^{hp},v^{hp} \right\rangle_{\Gamma_\Sigma} + \left\langle \lambda^{kq},v^{hp} \right\rangle_{\Gamma_C} - \left\langle  \gamma \left(\lambda^{kq} + S_{hp}u^{hp}\right), S_{hp}v^{hp} \right\rangle_{\Gamma_C}&= \left\langle t,v^{hp} \right\rangle_{\Gamma_N} &\quad &\forall v^{hp}\in \mathcal{V}_{hp} \label{eq:DiscreteVariationalEqualityPart} \\
\left\langle \mu^{kq} -\lambda^{kq},u^{hp} \right\rangle_{\Gamma_C} -  \left\langle  \gamma \left(\mu^{kq}-\lambda^{kq}\right),  \lambda^{kq} + S_{hp}u^{hp}  \right\rangle_{\Gamma_C} & \leq \left\langle g,\mu^{kq}_n-\lambda^{kq}_n \right\rangle_{\Gamma_C} &\quad &\forall \mu^{kq} \in M^+_{hp}(\mathcal{F}) \label{eq:DiscreteContactConstraints}
\end{alignat}
\end{subequations}
Here, $\gamma$ is a piecewise constant function on $\Gamma_C$ such that $\gamma|_E = \gamma_0 h_E p_E^{-2}$ with constant $\gamma_0>0$  for all elements $E \in \mathcal{T}_h$.

\begin{remark}
Often $M^+(\mathcal{F})$ is discretized  such that the constraints are only satisfied in a discrete set of points, namely
\begin{align}
\tilde{M}_{kq}^+(\mathcal{F}):= \left\{ \mu^{kq} \in L^2(\Gamma_C): \mu^{kq}|_E \in \left[\mathbb{P}_{q_E}(E)\right]^2,\ \mu^{kq}_n(x) \geq 0, \ -\mathcal{F} \leq \mu^{kq}_t(x) \leq \mathcal{F}\; \text{for }x\in G_{kq} \right\}\ ,
\end{align}
where $G_{kq}$ is a set of discrete points on $\Gamma_C$, e.g.~affinely transformed Gauss-Lobatto points.
\end{remark}
Unless specifically stated otherwise, the proven results are true for both discretizations $M_{kq}^+(\mathcal{F})$ and $\tilde{M}_{kq}^+(\mathcal{F})$.

In the following we collect some results on $S_{hp}$ which allow to prove existence and uniqueness of the solution of the mixed formulation \eqref{eq:DiscreteMixedProblem}.
\begin{lemma}[Lemma~15 in \cite{maischak2005adaptive}] \label{lem:SteklovApprox}
There holds:

\begin{enumerate}
	\item $S_{hp}$ is  continuous from $\tilde{H}^{1/2}(\Gamma_\Sigma)$ into $H^{-1/2}(\Gamma_\Sigma)$ and coercive on $\tilde{H}^{1/2}(\Gamma_\Sigma)\times \tilde{H}^{1/2}(\Gamma_\Sigma)$ with constants $C_S$ and $\alpha_S$.
	\item $E_{hp}:=S-S_{hp}$ is bounded from $\tilde{H}^{1/2}(\Gamma_\Sigma)$ into ${H}^{1/2}(\Gamma_\Sigma)$, and there exists constants $C_E$, $C>0$ such that
	\begin{align*}
	   \left\|E_{hp} v \right\|_{H^{-1/2}(\Gamma_\Sigma)} \leq C_E \left\| v \right\|_{\tilde{H}^{1/2}(\Gamma_\Sigma)} \quad \text{and} \quad
	   \left\|E_{hp} v \right\|_{H^{-1/2}(\Gamma_\Sigma)} \leq C \inf_{\phi \in \mathcal{V}_{hp}^D } \left\| V^{-1}(K+\frac{1}{2})v -\phi\right\|_{\tilde{H}^{-1/2}(\Gamma_\Sigma)}\ .
	\end{align*}
\end{enumerate}
\end{lemma}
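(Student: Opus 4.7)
My plan is to reduce both claims to standard mapping properties of $W$, $K$, $K^\top$ together with the Galerkin analysis of the single-layer equation $V\phi=(K+\tfrac12)u$ posed on the conforming subspace $\mathcal{V}_{hp}^D\subset\tilde{H}^{-1/2}(\Gamma)$. For part~1, continuity of $S_{hp}=W+(K^\top+\tfrac12)V_{hp}^{-1}(K+\tfrac12)$ follows by composition once I have a bound $\|V_{hp}^{-1}f\|_{\tilde{H}^{-1/2}}\le C\|f\|_{H^{1/2}}$ uniform in $h,p$; this in turn is just Lax--Milgram on the conforming discrete space, using the $\tilde{H}^{-1/2}$-coercivity of $V$. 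For the coercivity of $S_{hp}$, I would set $\phi_{hp}:=V_{hp}^{-1}(K+\tfrac12)u$ and exploit the Galerkin equation $\langle V\phi_{hp},\phi_{hp}\rangle=\langle(K+\tfrac12)u,\phi_{hp}\rangle$ to rewrite
\[
\langle S_{hp}u,u\rangle_{\Gamma_\Sigma}=\langle Wu,u\rangle_{\Gamma_\Sigma}+\langle V\phi_{hp},\phi_{hp}\rangle.
\]
The second term is non-negative by the $V$-ellipticity, so it suffices to invoke that $W$ itself is $\tilde{H}^{1/2}(\Gamma_\Sigma)$-elliptic---a classical fact whenever $\Gamma_D$ has positive surface measure, since then no non-trivial rigid body motion belongs to $\tilde{H}^{1/2}(\Gamma_\Sigma)$.

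For part~2, I would write $E_{hp}v=(K^\top+\tfrac12)(\phi-\phi_{hp})$, where $\phi:=V^{-1}(K+\tfrac12)v$ and $\phi_{hp}:=V_{hp}^{-1}(K+\tfrac12)v$. Since $K^\top+\tfrac12$ is continuous from $\tilde{H}^{-1/2}(\Gamma)$ into $H^{-1/2}(\Gamma_\Sigma)$, bounding $\|E_{hp}v\|_{H^{-1/2}}$ reduces to bounding $\|\phi-\phi_{hp}\|_{\tilde{H}^{-1/2}}$. But $\phi_{hp}$ is exactly the $V$-Galerkin approximation of $\phi$, so Céa's lemma yields
\[
\|\phi-\phi_{hp}\|_{\tilde{H}^{-1/2}}\le C\inf_{\psi\in\mathcal{V}_{hp}^D}\|\phi-\psi\|_{\tilde{H}^{-1/2}},
\]
which is precisely the second estimate. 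The first (absolute) bound then follows by choosing $\psi=0$ and combining with the continuity chain $\|\phi\|_{\tilde{H}^{-1/2}}\le C\|(K+\tfrac12)v\|_{H^{1/2}}\le C\|v\|_{\tilde{H}^{1/2}}$.

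The only genuinely load-bearing ingredients are the uniform (in $h,p$) boundedness of $V_{hp}^{-1}$ and the $\tilde{H}^{1/2}(\Gamma_\Sigma)$-ellipticity of $W$; both are classical in the BEM literature for the Lamé system but require that $\mathcal{V}_{hp}^D$ be a conforming subspace of $\tilde{H}^{-1/2}(\Gamma)$ and that $\Gamma_D$ have positive measure. Once these are granted, every remaining step is either Galerkin orthogonality in the symmetric form $\langle V\cdot,\cdot\rangle$ or a routine application of the mapping properties of the Calderón entries, and I would expect no further obstacle.
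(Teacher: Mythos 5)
The paper itself gives no proof of this lemma; it is cited verbatim as Lemma~15 in \cite{maischak2005adaptive}, so there is no in-paper argument to compare against. That said, your reconstruction via Galerkin orthogonality in $\langle V\cdot,\cdot\rangle$ is the standard one and is correct for the estimates displayed in the lemma. The algebraic identity $\langle S_{hp}u,u\rangle = \langle Wu,u\rangle + \langle V\phi_{hp},\phi_{hp}\rangle$, obtained from the Galerkin equation with test function $\phi_{hp}$, together with the non-negativity of $\langle V\phi_{hp},\phi_{hp}\rangle$, reduces coercivity of $S_{hp}$ to ellipticity of $W$ on $\tilde{H}^{1/2}(\Gamma_\Sigma)$; uniform boundedness of $V_{hp}^{-1}$ by Lax--Milgram on the conforming space $\mathcal{V}_{hp}^D$ gives continuity with $h,p$-independent constants. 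For part~2, writing $E_{hp}v = (K^\top+\tfrac12)(\phi-\phi_{hp})$ and recognizing $\phi_{hp}$ as the $V$-Galerkin quasi-best approximation of $\phi=V^{-1}(K+\tfrac12)v$ yields both displayed bounds by C\'ea's lemma, with the absolute bound $C_E$ following by choosing $\psi=0$.

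Two points deserve more care. First, the $\tilde{H}^{1/2}(\Gamma_\Sigma)$-ellipticity of $W$ does \emph{not} follow merely from the fact that no non-trivial rigid body motion lies in $\tilde{H}^{1/2}(\Gamma_\Sigma)$; that gives injectivity only. One additionally needs the G\aa{}rding inequality for $W$ plus a compactness/Fredholm argument, or one cites directly the known ellipticity of the Lam\'e hypersingular operator on $\tilde{H}^{1/2}$ of a proper open subset of $\Gamma$. Since you ultimately appeal to a ``classical fact'' this is a matter of phrasing rather than a gap, but the causality is misleading as written. Second, you only establish the $H^{-1/2}(\Gamma_\Sigma)$ estimates and do not address the statement that $E_{hp}$ is bounded from $\tilde{H}^{1/2}(\Gamma_\Sigma)$ into $H^{1/2}(\Gamma_\Sigma)$; however, because $K^\top+\tfrac12$ contains the identity it cannot be smoothing, and the displayed estimates and every later use of the lemma in the paper are in $H^{-1/2}$, so this is almost certainly a typographical error for $H^{-1/2}$ and your omission is harmless.
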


\begin{lemma}[Lemma~3.2.7 in \cite{Chernov2006}] \label{lem:V_ortho}
Let $i_{hp}: \mathcal{V}_{hp}^D \mapsto H^{-1/2}(\Gamma_\Sigma)$ be the canonical embedding and $i_{hp}^*$ its dual. Furthermore, let
\begin{align}\label{eq:Lame:defpsi}
 \psi=V^{-1}(K+\frac{1}{2})u, \qquad \psi_{hp}^*=V^{-1}(K+\frac{1}{2})u^{hp}, \qquad \psi^{hp}=i_{hp}V_{hp}^{-1}i_{hp}^*(K+\frac{1}{2})u^{hp}.
\end{align}
Then there holds
\begin{align*}
 \left\langle V (\psi^*_{hp}-\psi^{hp}),\phi^{hp}\right\rangle_{\Gamma_\Sigma}=0 \quad \forall \phi^{hp} \in \mathcal{V}^D_{hp}.
\end{align*}
\end{lemma}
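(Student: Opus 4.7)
The plan is to unwind the definitions and reduce the claim to the standard Galerkin orthogonality for the single layer potential $V$ over the space $\mathcal{V}_{hp}^D$. The key observation is that $V_{hp}$ is \emph{defined} as the Galerkin realization of $V$ over $\mathcal{V}_{hp}^D$, which in the notation of the lemma means precisely $V_{hp} = i_{hp}^\ast V i_{hp}$.

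Set $f := (K+\tfrac{1}{2})u^{hp}$, so that by the definitions \eqref{eq:Lame:defpsi} we have $V\psi_{hp}^\ast = f$ in $H^{-1/2}(\Gamma_\Sigma)$, and $\psi^{hp}=i_{hp}\tilde{\psi}^{hp}$ where $\tilde{\psi}^{hp}:=V_{hp}^{-1}i_{hp}^\ast f \in \mathcal{V}_{hp}^D$ is characterized by $V_{hp}\tilde{\psi}^{hp}=i_{hp}^\ast f$. Using $V_{hp}=i_{hp}^\ast V i_{hp}$ and the definition of the dual embedding $i_{hp}^\ast$, this characterization is equivalent to the variational equation
\begin{align*}
\langle V\psi^{hp},\phi^{hp}\rangle_{\Gamma_\Sigma} \;=\; \langle f,\phi^{hp}\rangle_{\Gamma_\Sigma} \qquad \forall\, \phi^{hp}\in\mathcal{V}_{hp}^D .
\end{align*}
On the other hand, since $V\psi_{hp}^\ast=f$ in $H^{-1/2}(\Gamma_\Sigma)$, testing against any $\phi^{hp}\in\mathcal{V}_{hp}^D$ yields $\langle V\psi_{hp}^\ast,\phi^{hp}\rangle_{\Gamma_\Sigma}=\langle f,\phi^{hp}\rangle_{\Gamma_\Sigma}$.

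Subtracting the two identities gives $\langle V(\psi_{hp}^\ast-\psi^{hp}),\phi^{hp}\rangle_{\Gamma_\Sigma}=0$ for every $\phi^{hp}\in\mathcal{V}_{hp}^D$, which is the claim. There is no real obstacle here: the entire content is bookkeeping of the identifications $i_{hp}$, $i_{hp}^\ast$ and the definition of the Galerkin matrix $V_{hp}$. The only step requiring a moment of care is checking that the duality pairings involved in ``$V_{hp}=i_{hp}^\ast V i_{hp}$'' agree with the $L^2$-extended $H^{-1/2}\times \tilde H^{1/2}$ pairing used on $\Gamma_\Sigma$, but this is exactly how $V_{hp}$ is introduced right after the definition of $\mathcal{V}_{hp}^D$ in Section \ref{sec:stabalizedDiscretization}.
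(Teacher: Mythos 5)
Your argument is correct, and it is the standard, essentially canonical proof of this Galerkin orthogonality: identify $V_{hp}=i_{hp}^*Vi_{hp}$, observe that $\psi^{hp}$ is by definition the Galerkin solution of $\langle V\,\cdot\,,\phi^{hp}\rangle_{\Gamma_\Sigma}=\langle f,\phi^{hp}\rangle_{\Gamma_\Sigma}$ on $\mathcal{V}_{hp}^D$ with $f=(K+\tfrac12)u^{hp}$, while $\psi_{hp}^*$ solves the same equation exactly, and subtract. The paper itself does not reproduce a proof — it cites the result as Lemma~3.2.7 of Chernov's thesis — so there is no in-paper argument to compare against, but your reduction is precisely what that reference does, and the one place you flag (that the duality pairing behind $i_{hp}^*$ is the same $H^{-1/2}\times\tilde H^{1/2}$ pairing used throughout, so $\langle i_{hp}^*f,\chi\rangle=\langle f,i_{hp}\chi\rangle$ and $\langle V_{hp}\tilde\psi^{hp},\chi\rangle=\langle V\psi^{hp},i_{hp}\chi\rangle$) is indeed the only point that needs verifying, and you handle it correctly.
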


\begin{theorem} \label{thm:inverseEstimate}
Let $\mathcal{T}_h$ be a locally quasi-uniform mesh. Then there holds
\begin{align}
\sum_{E \in \mathcal{T}_h} \left\| \frac{h_E^{1/2}}{p_E} S_{hp} v^{hp} \right\|_{L^2(E)}^2 \leq C^2 \left\|  v^{hp}\right\|^2_{\tilde{H}^{1/2}(\Gamma_\Sigma)} \quad \forall v^{hp} \in \mathcal{V}_{hp}.
\end{align}
\begin{proof}
From the definition of $S_{hp}$ follows that $S_{hp} v^{hp} = Wv^{hp}  + (K^\top +\frac{1}{2})\eta^{hp}$ with  $\eta^{hp}= V_{hp}^{-1}(K+\frac{1}{2})v^{hp} \in \mathcal{V}_{hp}^D$. In \cite[Theorem 4.4]{Karkulik2012} it is shown that
\begin{align*}
\sum_{E \in \mathcal{T}_h} \left\| \frac{h_E^{1/2}}{p_E} W v^{hp} \right\|_{L^2(E)}^2 \leq C^2 \left\|  v^{hp}\right\|^2_{\tilde{H}^{1/2}(\Gamma_\Sigma)}, \qquad
\sum_{E \in \mathcal{T}_h} \left\| \frac{h_E^{1/2}}{p_E} K^\top \eta^{hp} \right\|_{L^2(E)}^2 \leq C^2 \left\| \eta^{hp}\right\|^2_{\tilde{H}^{-1/2}(\Gamma_\Sigma)}
\end{align*}
for the boundary integral operators associated to the Laplacian. For the integral operators \eqref{eq:lame:integoperat1}, \eqref{eq:lame:integoperat2} of the Lam\'{e} equation this can be done analogously.  The assertion follows with the mapping properties of $V_{hp}^{-1}(K+\frac{1}{2})$.
\end{proof}
\end{theorem}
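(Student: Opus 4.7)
The plan is to follow the standard decomposition of the approximate Steklov-Poincar\'{e} operator and to reduce the claim to known $hp$-inverse estimates for the single boundary integral operators $W$ and $K^\top$.

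First, I would write
\begin{align*}
S_{hp}v^{hp} = W v^{hp} + \left(K^\top + \tfrac{1}{2}\right)\eta^{hp}, \qquad \eta^{hp} := V_{hp}^{-1}\left(K+\tfrac{1}{2}\right)v^{hp} \in \mathcal{V}_{hp}^D,
\end{align*}
so by the triangle inequality it suffices to bound the two terms separately in the weighted norm $\sum_{E} \|h_E^{1/2} p_E^{-1} \,\cdot\,\|_{L^2(E)}^2$. The first term is handled directly by an $hp$-inverse estimate for the hypersingular operator $W$ on the locally quasi-uniform mesh $\mathcal{T}_h$; this is exactly the type of estimate proved by Karkulik~\cite{Karkulik2012} for the Laplace hypersingular operator, and I would argue that the derivation transfers verbatim to the Lam\'{e} hypersingular operator in \eqref{eq:lame:integoperat2}, because its kernel has the same logarithmic principal part plus smoother perturbations.

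For the second term I would first bound it via the analogous inverse estimate for $K^\top$ applied to piecewise polynomials,
\begin{align*}
\sum_{E \in \mathcal{T}_h} \left\| \tfrac{h_E^{1/2}}{p_E} K^\top \eta^{hp} \right\|_{L^2(E)}^2 \leq C^2 \left\|\eta^{hp}\right\|^2_{\tilde{H}^{-1/2}(\Gamma_\Sigma)},
\end{align*}
together with the trivial estimate $\sum_E \|h_E^{1/2} p_E^{-1} \eta^{hp}\|_{L^2(E)}^2 \lesssim \|\eta^{hp}\|_{\tilde H^{-1/2}(\Gamma_\Sigma)}^2$ (e.g.\ by a local inverse inequality on $\mathcal{V}_{hp}^D$). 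It then remains to control $\|\eta^{hp}\|_{\tilde{H}^{-1/2}(\Gamma_\Sigma)}$ in terms of $\|v^{hp}\|_{\tilde{H}^{1/2}(\Gamma_\Sigma)}$. This is the mapping property referenced at the end of the author's proof: $\eta^{hp}$ is the Galerkin projection (with respect to the $V$-induced inner product) of $V^{-1}(K+\tfrac{1}{2})v^{hp}$ onto $\mathcal{V}_{hp}^D$, so by coercivity of $V$ on $\tilde H^{-1/2}(\Gamma_\Sigma)$ it is quasi-optimal, and continuity of $V^{-1}(K+\tfrac{1}{2}):\tilde H^{1/2}(\Gamma_\Sigma)\to\tilde H^{-1/2}(\Gamma_\Sigma)$ yields the desired bound.

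The main obstacle is the transfer of the Karkulik-type $hp$-inverse estimates from the scalar Laplace operators to their vector-valued Lam\'{e} counterparts in \eqref{eq:lame:integoperat1}--\eqref{eq:lame:integoperat2}. The argument in \cite{Karkulik2012} relies on the explicit logarithmic/Cauchy singular structure of the kernels and on $hp$-polynomial inverse inequalities on a single reference element combined with a localization of the non-local operator using a suitable partition of unity. For Lam\'{e}, the fundamental solution has exactly the same principal singularity, augmented by a smoother tensor-valued contribution coming from the term $(x-y)(x-y)^\top/|x-y|^2$ in $G(x,y)$; this additional piece is bounded, so it only contributes to the lower-order (compact) part and does not affect the scaling in $h_E$ and $p_E$. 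Once this is checked componentwise, the proof of Theorem~\ref{thm:inverseEstimate} is obtained by combining the two operator estimates with the mapping property of $V_{hp}^{-1}(K+\tfrac{1}{2})$, as indicated.
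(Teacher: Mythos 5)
Your proposal follows essentially the same route as the paper's own proof: the same decomposition $S_{hp}v^{hp} = Wv^{hp} + (K^\top+\tfrac12)\eta^{hp}$, the same appeal to the Karkulik-type $hp$-weighted inverse estimates for $W$ and $K^\top$ transferred from Laplace to Lam\'e, and the same final step via the mapping properties of $V_{hp}^{-1}(K+\tfrac12)$. You add a useful detail the paper leaves implicit, namely the treatment of the $\tfrac12\eta^{hp}$ identity contribution via a local inverse inequality on $\mathcal{V}_{hp}^D$, but this is an elaboration rather than a different argument.
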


\begin{lemma}[Coercivity] \label{lem:coerciveDiscrete}
For $\gamma_0$ sufficiently small, there exists a constant $\alpha>0$ independent of $h$, $p$, $k$ and $q$, such that
\begin{align}
 \left\langle S_{hp}v^{hp},v^{hp} \right\rangle_{\Gamma_\Sigma}  - \left\langle  \gamma  S_{hp}v^{hp}, S_{hp}v^{hp} \right\rangle_{\Gamma_C} \geq \alpha \left\|v^{hp}\right\|^2_{\tilde{H}^{1/2}(\Gamma_\Sigma)} \qquad \forall v\in \mathcal{V}_{hp}.
\end{align}
\begin{proof}
From Theorem~\ref{thm:inverseEstimate} it follows that
\begin{align}
\left\langle  \gamma  S_{hp}v^{hp}, S_{hp}v^{hp} \right\rangle_{\Gamma_C} \leq \gamma_0 C \left\|v^{hp}\right\|^2_{\tilde{H}^{1/2}(\Gamma_\Sigma)}
\end{align}
with $C>0$ independent of of $h$, $p$, $k$ and $q$. Hence, from the coercivity of $S_{hp}$ there holds
\begin{align}
\left\langle S_{hp}v^{hp},v^{hp} \right\rangle_{\Gamma_\Sigma}  - \left\langle   \gamma  S_{hp}v^{hp}, S_{hp}v^{hp} \right\rangle_{\Gamma_C} \geq (\alpha_S - \gamma_0 C) \left\|v^{hp}\right\|^2_{\tilde{H}^{1/2}(\Gamma_\Sigma)}.
\end{align}
\end{proof}
\end{lemma}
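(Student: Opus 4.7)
The strategy is to treat the stabilization term as a perturbation of the coercivity of $S_{hp}$ guaranteed by Lemma~\ref{lem:SteklovApprox}, and to absorb it using the inverse-type estimate of Theorem~\ref{thm:inverseEstimate}.

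First I would rewrite the stabilization term in a form that matches Theorem~\ref{thm:inverseEstimate}. Since $\gamma|_E = \gamma_0 h_E p_E^{-2}$ is piecewise constant on $\mathcal{T}_h$ and $\Gamma_C \subset \Gamma_\Sigma$, one has
\begin{align*}
\left\langle \gamma\, S_{hp}v^{hp},\, S_{hp}v^{hp}\right\rangle_{\Gamma_C}
= \gamma_0 \sum_{E \in \mathcal{T}_h,\, E\subset \Gamma_C}
\left\| \frac{h_E^{1/2}}{p_E}\, S_{hp} v^{hp}\right\|_{L^2(E)}^2
\leq \gamma_0 \sum_{E\in \mathcal{T}_h} \left\| \frac{h_E^{1/2}}{p_E}\, S_{hp} v^{hp}\right\|_{L^2(E)}^2.
\end{align*}
Applying Theorem~\ref{thm:inverseEstimate} directly yields
\begin{align*}
\left\langle \gamma\, S_{hp}v^{hp},\, S_{hp}v^{hp}\right\rangle_{\Gamma_C}
\leq \gamma_0 C^2 \|v^{hp}\|^2_{\tilde H^{1/2}(\Gamma_\Sigma)},
\end{align*}
with a constant $C$ independent of $h,p,k,q$.

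Next I would combine this bound with the $\tilde H^{1/2}(\Gamma_\Sigma)$-coercivity of $S_{hp}$ from item~1 of Lemma~\ref{lem:SteklovApprox}, which gives $\langle S_{hp}v^{hp},v^{hp}\rangle_{\Gamma_\Sigma} \geq \alpha_S \|v^{hp}\|^2_{\tilde H^{1/2}(\Gamma_\Sigma)}$. Subtracting the two inequalities yields
\begin{align*}
\left\langle S_{hp}v^{hp},v^{hp}\right\rangle_{\Gamma_\Sigma}
- \left\langle \gamma\, S_{hp}v^{hp},\, S_{hp}v^{hp}\right\rangle_{\Gamma_C}
\geq (\alpha_S - \gamma_0 C^2)\, \|v^{hp}\|^2_{\tilde H^{1/2}(\Gamma_\Sigma)}.
\end{align*}
Choosing $\gamma_0 < \alpha_S / C^2$ and setting $\alpha := \alpha_S - \gamma_0 C^2 > 0$ finishes the proof; the independence of $\alpha$ from the discretization parameters is inherited from the constants in Lemma~\ref{lem:SteklovApprox} and Theorem~\ref{thm:inverseEstimate}.

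There is really no hard step here: the whole point is that Theorem~\ref{thm:inverseEstimate} was set up to convert the weighted $L^2$-norm of $S_{hp}v^{hp}$ into a control by the $\tilde H^{1/2}$-norm of $v^{hp}$, which makes the stabilization a small, relatively bounded perturbation of the coercive form. The main obstacle — if any — would have been proving Theorem~\ref{thm:inverseEstimate} itself (the $hp$-inverse estimate for $W$ and $K^\top$, together with the mapping properties of $V_{hp}^{-1}(K+\tfrac12)$), but that has already been established. Given those tools, the choice of the scaling $\gamma|_E \sim h_E p_E^{-2}$ is exactly what makes the perturbation term $O(\gamma_0)$ in the natural energy norm, and smallness of $\gamma_0$ does the rest.
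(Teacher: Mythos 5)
Your proof is correct and follows the same route as the paper: rewrite the stabilization term as the weighted elementwise $L^2$-sum, absorb it via Theorem~\ref{thm:inverseEstimate}, and subtract from the coercivity of $S_{hp}$ to obtain $(\alpha_S-\gamma_0 C)$. The only difference is that you spell out the intermediate identity $\langle \gamma S_{hp}v^{hp},S_{hp}v^{hp}\rangle_{\Gamma_C}=\gamma_0\sum_{E\subset\Gamma_C}\|h_E^{1/2}p_E^{-1}S_{hp}v^{hp}\|^2_{L^2(E)}$, which the paper leaves implicit.
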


In the following it is assumed, that $\gamma_0$ is sufficiently small and, therefore, Lemma~\ref{lem:coerciveDiscrete} is always applicable.

\begin{theorem}[Existence / Uniqueness]
For $\gamma_0$ sufficiently small, the discrete, stabilized problem \eqref{eq:DiscreteMixedProblem} has a unique solution.
\begin{proof}
In the standard manner it can be shown that \eqref{eq:DiscreteMixedProblem} is equivalent to the saddle-point problem: Find $(u^{hp},\lambda^{kq}) \in \mathcal{V}_{hp} \times M^+_{kq}(\mathcal{F})$ such that
\begin{align}\label{eq:stab:saddle}
 \mathcal{L}_{\gamma}(u^{hp}, \mu^{kq})\leq \mathcal{L}_{\gamma}(u^{hp}, \lambda^{kq})\leq\mathcal{L}_{\gamma}(v^{hp}, \lambda^{kq})
\qquad \forall v^{hp} \in \mathcal{V}_{hp},\  \forall  \mu^{kq}\in M^+_{kq}(\mathcal{F}),
\end{align}
with
\begin{align}\label{eq:stab:saddle1}
 \mathcal{L}_{\gamma}(v^{hp}, \mu^{kq})=\frac{1}{2}\langle S_{hp} v^{hp} ,v^{hp}\rangle_{\Gamma_\Sigma}-L(v^{hp})+\left\langle  \mu^{kq},v^{hp} \right\rangle_{\Gamma_C}-\frac{1}{2} \left\langle \gamma(\mu^{kq}+S_{hp}v^{hp}),\mu^{kq}+S_{hp}v^{hp}\right\rangle_{\Gamma_C}.
\end{align}
Due to
\begin{align*}
 \mathcal{L}_{\gamma}(v^{hp}, 0)=\frac{1}{2}\langle S_{hp} v^{hp} ,v^{hp}\rangle_{\Gamma_\Sigma}-L(v^{hp})-\frac{1}{2}\int_{\Gamma_{C}}\gamma(S_{hp}v^{hp})^{2}ds \geq \frac{\alpha}{2} \left\|v^{hp}\right\|^2_{\tilde{H}^{1/2}(\Gamma_\Sigma)} - \left\|t\right\|_{H^{-1/2}(\Gamma_N)} \left\|v^{hp}\right\|_{\tilde{H}^{1/2}(\Gamma_\Sigma)}
\end{align*}
and $\mathcal{L}_{\gamma}(0, \mu^{kq})=-\frac{1}{2}\int_{\Gamma_{C}}\gamma(\mu^{kq})^{2}ds$, $\mathcal{L}_{\gamma}$ is strictly convex in $v^{hp}$ and strictly concave in $\mu^{kq}$. Since it is also continuous on $\mathcal{V}_{hp} \times M^+_{kq}(\mathcal{F})$ and $\mathcal{V}_{hp}$, $M^+_{kq}(\mathcal{F})$ are non-empty convex sets (standard arguments) provide the existence of the solution.\\
Let $(u_1,\lambda_1)$ and $(u_2,\lambda_2)$ be two solutions of \eqref{eq:DiscreteMixedProblem}. Then, choosing $\mu_1=\lambda_2$ and $\mu_2=\lambda_1$ in \eqref{eq:DiscreteContactConstraints} yields after adding these two inequalities
\begin{align} \label{eq:unique_u_proof1}
\left\langle \lambda_1-\lambda_2,u_1-u_2\right\rangle_{\Gamma_C} - \left\langle \gamma (\lambda_1-\lambda_2),\lambda_1-\lambda_2+S_{hp}(u_1-u_2) \right\rangle_{\Gamma_C} \geq 0.
\end{align}
Furthermore, inserting $u_1$ and $u_2$ in \eqref{eq:DiscreteVariationalEqualityPart} respectively and  subtracting the  two resulting equations, setting  $v^{hp}=u_1-u_2$ implies
\begin{align*}
0&= \left\langle S_{hp}(u_1-u_2),u_1-u_2 \right\rangle_{\Gamma_\Sigma} + \left\langle \lambda_1-\lambda_2,u_1-u_2 \right\rangle_{\Gamma_C} -  \left\langle\gamma \left(\lambda_1-\lambda_2 + S_{hp}(u_1-u_2)\right) ,S_{hp}(u_1-u_2) \right\rangle_{\Gamma_C}  \\
&\geq \left\langle S_{hp}(u_1-u_2),u_1-u_2 \right\rangle_{\Gamma_\Sigma} - \left\langle  \gamma S_{hp}(u_1-u_2), S_{hp}(u_1-u_2) \right\rangle_{\Gamma_C} +  \left\langle  \gamma (\lambda_1-\lambda_2),\lambda_1-\lambda_2\right\rangle_{\Gamma_C}\\
&\geq \alpha \left\|u_1-u_2\right\|^2_{\tilde{H}^{1/2}(\Gamma_\Sigma)} + \left\|\gamma^{1/2} (\lambda_1-\lambda_2)\right\|^2_{L^2(\Gamma_C)}.
\end{align*}
This yields the asserted uniqueness of the solution.
\end{proof}
\end{theorem}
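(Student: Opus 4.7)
The plan is to recast the discrete stabilized inequality system \eqref{eq:DiscreteMixedProblem} as a saddle-point problem for a quadratic Lagrangian $\mathcal{L}_\gamma$ on the product set $\mathcal{V}_{hp} \times M^+_{kq}(\mathcal{F})$, read off existence from standard convex-analysis arguments in this finite-dimensional setting, and then deduce uniqueness by a direct test-and-subtract calculation. The Lagrangian I would use is the natural one for a stabilized mixed formulation: the primal energy $\tfrac12\langle S_{hp}v^{hp},v^{hp}\rangle_{\Gamma_\Sigma}-\langle t,v^{hp}\rangle_{\Gamma_N}$, plus the coupling $\langle\mu^{kq},v^{hp}\rangle_{\Gamma_C}-\langle g,\mu^{kq}_n\rangle_{\Gamma_C}$, minus the stabilization penalty $\tfrac12\langle\gamma(\mu^{kq}+S_{hp}v^{hp}),\mu^{kq}+S_{hp}v^{hp}\rangle_{\Gamma_C}$. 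A short first-order calculation should then show that a saddle point of $\mathcal{L}_\gamma$ reproduces exactly \eqref{eq:DiscreteVariationalEqualityPart} (an equality, because $\mathcal{V}_{hp}$ is a linear subspace) and \eqref{eq:DiscreteContactConstraints} (a variational inequality, because $M^+_{kq}(\mathcal{F})$ is merely a convex set).

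To get existence I would verify the hypotheses of a standard minimax theorem. Lemma~\ref{lem:coerciveDiscrete} is tailor-made for this: it yields strict convexity and $\tilde{H}^{1/2}(\Gamma_\Sigma)$-coercivity of $v^{hp}\mapsto\mathcal{L}_\gamma(v^{hp},\mu^{kq})$ for each fixed $\mu^{kq}$, as soon as $\gamma_0$ is small enough. The $\mu^{kq}$-dependence is quadratic with leading term $-\tfrac12\langle\gamma\mu^{kq},\mu^{kq}\rangle_{\Gamma_C}$, which is strictly concave and weighted-$L^2(\Gamma_C)$-coercive because $\gamma>0$ pointwise on $\Gamma_C$. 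Since $\mathcal{V}_{hp}$ and $M^+_{kq}(\mathcal{F})$ are nonempty closed convex sets (both contain $0$) and everything is finite-dimensional, existence of at least one saddle point is standard.

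For uniqueness I would use the classical test-and-subtract trick. Given two candidate solutions $(u_1,\lambda_1)$ and $(u_2,\lambda_2)$, testing \eqref{eq:DiscreteContactConstraints} for the first with $\mu^{kq}=\lambda_2$ and for the second with $\mu^{kq}=\lambda_1$ and adding eliminates the $g$-terms and yields
\[
\langle \lambda_1-\lambda_2,u_1-u_2\rangle_{\Gamma_C} \geq \langle\gamma(\lambda_1-\lambda_2),(\lambda_1-\lambda_2)+S_{hp}(u_1-u_2)\rangle_{\Gamma_C}.
\]
Subtracting \eqref{eq:DiscreteVariationalEqualityPart} for the two solutions with test $v^{hp}=u_1-u_2$ produces a complementary equality involving $\langle S_{hp}(u_1-u_2),u_1-u_2\rangle_{\Gamma_\Sigma}$ and the same stabilization cross terms. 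Substituting the inequality into that equality and invoking Lemma~\ref{lem:coerciveDiscrete} should collapse everything to
\[
0 \geq \alpha\,\|u_1-u_2\|_{\tilde{H}^{1/2}(\Gamma_\Sigma)}^2 + \|\gamma^{1/2}(\lambda_1-\lambda_2)\|_{L^2(\Gamma_C)}^2,
\]
from which $u_1=u_2$ and, because $\gamma>0$ almost everywhere on $\Gamma_C$, also $\lambda_1=\lambda_2$.

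The step I expect to be the most delicate is the algebraic cancellation of the mixed stabilization cross term $\langle\gamma(\lambda_1-\lambda_2),S_{hp}(u_1-u_2)\rangle_{\Gamma_C}$ between \eqref{eq:DiscreteVariationalEqualityPart} and \eqref{eq:DiscreteContactConstraints}: it is precisely this cancellation that converts the Barbosa--Hughes stabilization into a genuine $L^2$-control of $\lambda_1-\lambda_2$, bypassing the lack of a discrete inf-sup condition and giving uniqueness without any compatibility assumption between $\mathcal{V}_{hp}$ and $M^+_{kq}(\mathcal{F})$.
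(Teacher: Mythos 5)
Your proposal follows essentially the same route as the paper: recasting \eqref{eq:DiscreteMixedProblem} as a saddle-point problem for the quadratic Lagrangian $\mathcal{L}_\gamma$, invoking strict convexity/concavity and coercivity (via Lemma~\ref{lem:coerciveDiscrete}) for existence, and the test-and-subtract cancellation of the stabilization cross term for uniqueness. The only notable difference is a minor refinement: your Lagrangian retains the $-\langle g,\mu^{kq}_n\rangle_{\Gamma_C}$ coupling term, which the paper's displayed $\mathcal{L}_\gamma$ omits even though it is needed for the first-order optimality conditions in $\mu^{kq}$ to reproduce \eqref{eq:DiscreteContactConstraints} exactly when $g\not\equiv 0$.
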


Due to the conformity in the primal variable there trivially holds the following Galerkin orthogonality.
\begin{lemma} \label{lem:galerkinOrtho}
Let $(u,\lambda)$, $(u^{hp},\lambda^{kq})$ be the solution of \eqref{eq:MixedProblem}, \eqref{eq:DiscreteMixedProblem} respectively. Then there holds
\begin{align*}
 \langle Su-S_{hp} u^{hp} ,v^{hp}\rangle_{\Gamma_\Sigma} + \left\langle \lambda-\lambda^{kq},v^{hp}\right\rangle_{\Gamma_C}+\left\langle\gamma(\lambda^{kq}+S_{hp}u^{hp}),S_{hp}v^{hp} \right\rangle_{\Gamma_C}=0\quad &\forall v^{hp}\in \mathcal{V}_{hp}.
\end{align*}
\end{lemma}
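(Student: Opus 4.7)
The lemma is, as the authors anticipate, a direct consequence of the conformity $\mathcal{V}_{hp}\subset \tilde H^{1/2}(\Gamma_\Sigma)$, so my plan is simply to align the two variational equalities and subtract. First, I would observe that every discrete test function $v^{hp}\in\mathcal{V}_{hp}$ is an admissible test function in the continuous equality \eqref{eq:WeakMixedVarEq}, which therefore yields
\[
\langle Su,v^{hp}\rangle_{\Gamma_\Sigma}+\langle\lambda,v^{hp}\rangle_{\Gamma_C}=\langle t,v^{hp}\rangle_{\Gamma_N}.
\]

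Next, I would invoke the discrete equality \eqref{eq:DiscreteVariationalEqualityPart}, which for the same $v^{hp}$ reads
\[
\langle S_{hp}u^{hp},v^{hp}\rangle_{\Gamma_\Sigma}+\langle\lambda^{kq},v^{hp}\rangle_{\Gamma_C}-\langle\gamma(\lambda^{kq}+S_{hp}u^{hp}),S_{hp}v^{hp}\rangle_{\Gamma_C}=\langle t,v^{hp}\rangle_{\Gamma_N}.
\]
Subtracting this from the previous identity eliminates the common right-hand side $\langle t,v^{hp}\rangle_{\Gamma_N}$ and produces exactly the claimed Galerkin-type orthogonality.

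There is no genuine obstacle here: no inf--sup argument, no coercivity, no approximation property is needed, because the statement is an equality obtained purely by testing. The only mild subtlety worth flagging is that the stabilization term $\langle\gamma(\lambda^{kq}+S_{hp}u^{hp}),S_{hp}v^{hp}\rangle_{\Gamma_C}$ has no counterpart in the continuous problem and is carried along untouched from \eqref{eq:DiscreteVariationalEqualityPart}; the sign appearing in the lemma (a plus sign in front of the stabilization term) matches the sign produced by moving the stabilization contribution to the left-hand side in the subtraction. No use of the variational inequality parts \eqref{eq:ContContactConstraints} or \eqref{eq:DiscreteContactConstraints} is required, which is why the result deserves the descriptor \emph{trivially} used in the preamble of the lemma.
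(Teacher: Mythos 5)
Your proof is correct and is precisely the argument the paper leaves implicit when it states that the Galerkin orthogonality holds ``trivially'' due to the conformity in the primal variable: test \eqref{eq:WeakMixedVarEq} with $v^{hp}\in\mathcal{V}_{hp}\subset\tilde H^{1/2}(\Gamma_\Sigma)$, subtract \eqref{eq:DiscreteVariationalEqualityPart}, and the common right-hand side cancels. Your remark about the sign of the stabilization term is the only nontrivial bookkeeping and you have handled it correctly.
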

The next result will be used in our error analysis in Section \ref{sec:Apriorierrorestimations}.

\begin{lemma}[Stability]\label{lem:stability}
There exists a constant $C>0$, independent of $h$, $p$, $k$ and $q$, such that
\begin{align}
\alpha \left\|u^{hp}\right\|^2_{\tilde{H}^{1/2}(\Gamma_\Sigma)} + \left\|\gamma^{1/2} \lambda^{kq} \right\|^2_{L^2(\Gamma_C)}   &\leq \left(C \left\|u\right\|_{\tilde{H}^{1/2}(\Gamma_\Sigma)} + \left\|\lambda\right\|_{\tilde{H}^{-1/2}(\Gamma_C)} \right) \left\|u^{hp}\right\|_{\tilde{H}^{1/2}(\Gamma_\Sigma)}\\
& \qquad  + \left\|g\right\|_{{H}^{1/2}(\Gamma_C)}  \left\|\lambda_n^{kq}\right\|_{\tilde{H}^{-1/2}(\Gamma_C)}
\end{align}
\begin{proof}
Choosing $\mu_{n}^{kq}=0,2\lambda_n^{kq}$ and $\mu_{t}^{kq}=\lambda_{t}^{kq}$ in \eqref{eq:DiscreteContactConstraints} yields
\begin{align*}
  \left\langle \lambda_{n}^{kq}, u_{n}^{hp} \right\rangle_{\Gamma_C} - \left\langle \gamma \lambda_{n}^{kq},\lambda_{n}^{kq}+(S_{hp}u^{hp})n \right\rangle_{\Gamma_C}  = \left\langle g,\lambda_n^{kq}\right\rangle_{\Gamma_C},
\end{align*}
whereas $\mu_n^{kq}=\lambda_n^{kq}$ and $\mu_t^{kq}=0$ yields
\begin{align*}
  \left\langle -\lambda_{t}^{kq}, u_{t}^{hp} \right\rangle_{\Gamma_C} +  \left\langle \gamma \lambda_{t}^{kq},\lambda_{t}^{kq}+(S_{hp}u^{hp})t \right\rangle_{\Gamma_C}\leq 0.
\end{align*}
Hence, \eqref{eq:DiscreteMixedProblem} yields with $v^{hp}=u^{hp}$ and Lemma~\ref{lem:coerciveDiscrete}
\begin{align*}
\left\langle t,u^{hp}\right\rangle_{\Gamma_N} &= \left\langle S_{hp}u^{hp},u^{hp} \right\rangle_{\Gamma_\Sigma} -\left\langle\gamma S_{hp} u^{hp}, S_{hp}u^{hp} \right\rangle_{\Gamma_C} + \left\langle \lambda^{kq}, u^{hp} \right\rangle_{\Gamma_C} - \left\langle\gamma \lambda^{kq}, S_{hp}u^{hp} \right\rangle_{\Gamma_C} \\
&\geq \left\langle S_{hp}u^{hp},u^{hp} \right\rangle_{\Gamma_\Sigma} - \left\langle \gamma S_{hp} u^{hp}, S_{hp}u^{hp} \right\rangle_{\Gamma_C} + \left\langle \gamma \lambda^{kq}, \lambda^{kq} \right\rangle_{\Gamma_C} + \left\langle g, \lambda_n^{kq}\right\rangle_{\Gamma_C} \\
& \geq \alpha \left\|u^{hp}\right\|^2_{\tilde{H}^{1/2}(\Gamma_\Sigma)} + \left\|\gamma^{1/2} \lambda^{kq} \right\|^2_{L^2(\Gamma_C)} + \left\langle g, \lambda_n^{kq}\right\rangle_{\Gamma_C}
\end{align*}
On the other hand from the \eqref{eq:WeakMixedVarEq} with $v = u^{hp} \in \mathcal{V}_{hp} \subset \tilde{H}^{1/2}(\Gamma_\Sigma)$ follows
\begin{align*}
\left\langle t,u^{hp}\right\rangle_{\Gamma_N} \leq \left(C \left\|u\right\|_{\tilde{H}^{1/2}(\Gamma_\Sigma)} + \left\|\lambda\right\|_{\tilde{H}^{-1/2}(\Gamma_C)} \right) \left\|u^{hp}\right\|_{\tilde{H}^{1/2}(\Gamma_\Sigma)},
\end{align*}
which completes the proof.
\end{proof}
\end{lemma}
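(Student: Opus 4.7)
The plan is to extract energy-type bounds on $u^{hp}$ and $\lambda^{kq}$ by testing both parts of \eqref{eq:DiscreteMixedProblem} with carefully chosen functions, and then to bound the driving Neumann pairing $\langle t, u^{hp}\rangle_{\Gamma_N}$ from above via the continuous variational equation \eqref{eq:WeakMixedVarEq}. The key identity to aim for is
\begin{align*}
\langle t, u^{hp}\rangle_{\Gamma_N} \;\geq\; \alpha \|u^{hp}\|^2_{\tilde{H}^{1/2}(\Gamma_\Sigma)} + \|\gamma^{1/2}\lambda^{kq}\|^2_{L^2(\Gamma_C)} + \langle g,\lambda_n^{kq}\rangle_{\Gamma_C},
\end{align*}
from which the claim follows after estimating the left-hand side via the continuous problem and moving the gap term to the right.

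First I would harvest information on $\lambda^{kq}$ from the discrete contact constraint \eqref{eq:DiscreteContactConstraints}. Because the Bernstein coefficients in the normal direction can be any nonnegative numbers, the choices $\mu^{kq} = (0,\lambda_t^{kq})$ and $\mu^{kq} = (2\lambda_n^{kq},\lambda_t^{kq})$ are both admissible; inserting them and adding the resulting two inequalities turns them into an equality
\begin{align*}
\langle \lambda_n^{kq}, u_n^{hp}\rangle_{\Gamma_C} - \langle \gamma \lambda_n^{kq}, \lambda_n^{kq} + (S_{hp}u^{hp})_n\rangle_{\Gamma_C} = \langle g,\lambda_n^{kq}\rangle_{\Gamma_C}.
\end{align*}
For the tangential part, the choice $\mu^{kq} = (\lambda_n^{kq},0)$ lies in $M^+_{kq}(\mathcal{F})$ (since the admissible range $[-\mathcal{F},\mathcal{F}]$ contains $0$) and produces the one-sided bound
\begin{align*}
-\langle \lambda_t^{kq}, u_t^{hp}\rangle_{\Gamma_C} + \langle \gamma \lambda_t^{kq}, \lambda_t^{kq} + (S_{hp}u^{hp})_t\rangle_{\Gamma_C} \leq 0.
\end{align*}

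Next I would test the discrete variational equality \eqref{eq:DiscreteVariationalEqualityPart} with $v^{hp} = u^{hp}$, obtaining an exact expression for $\langle t, u^{hp}\rangle_{\Gamma_N}$. Substituting the normal identity and the tangential inequality (with the appropriate signs) eliminates all cross terms $\langle \lambda^{kq}, \cdot\rangle - \langle \gamma \lambda^{kq}, S_{hp}u^{hp}\rangle$ in favor of $\|\gamma^{1/2}\lambda^{kq}\|^2_{L^2(\Gamma_C)}$ plus the gap pairing $\langle g,\lambda_n^{kq}\rangle_{\Gamma_C}$. The remaining bilinear-form expression $\langle S_{hp}u^{hp},u^{hp}\rangle_{\Gamma_\Sigma} - \langle \gamma S_{hp}u^{hp}, S_{hp}u^{hp}\rangle_{\Gamma_C}$ is controlled from below by $\alpha\|u^{hp}\|^2_{\tilde{H}^{1/2}(\Gamma_\Sigma)}$ via Lemma~\ref{lem:coerciveDiscrete}, which is the crucial place where the smallness of $\gamma_0$ enters.

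Finally, since $u^{hp} \in \mathcal{V}_{hp} \subset \tilde{H}^{1/2}(\Gamma_\Sigma)$, the continuous mixed equation \eqref{eq:WeakMixedVarEq} applied to $v = u^{hp}$ gives
\begin{align*}
\langle t,u^{hp}\rangle_{\Gamma_N} = \langle Su, u^{hp}\rangle_{\Gamma_\Sigma} + \langle \lambda, u^{hp}\rangle_{\Gamma_C} \leq \bigl(C\|u\|_{\tilde{H}^{1/2}(\Gamma_\Sigma)} + \|\lambda\|_{\tilde{H}^{-1/2}(\Gamma_C)}\bigr)\|u^{hp}\|_{\tilde{H}^{1/2}(\Gamma_\Sigma)},
\end{align*}
using the continuity of $S$ and a duality estimate for the $\Gamma_C$-pairing. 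Moving $\langle g,\lambda_n^{kq}\rangle_{\Gamma_C}$ to the right-hand side and bounding it by $\|g\|_{H^{1/2}(\Gamma_C)}\|\lambda_n^{kq}\|_{\tilde{H}^{-1/2}(\Gamma_C)}$ yields the asserted stability estimate. The main obstacle is the bookkeeping in combining the two one-sided inequalities from \eqref{eq:DiscreteContactConstraints} with the variational equality so that the stabilization terms $\langle \gamma \cdot, \cdot\rangle$ collapse to a nonnegative $\|\gamma^{1/2}\lambda^{kq}\|^2_{L^2(\Gamma_C)}$ contribution rather than producing unwanted indefinite cross terms; verifying that $(2\lambda_n^{kq},\lambda_t^{kq})$ really lies in $M^+_{kq}(\mathcal{F})$ (and likewise in $\tilde{M}^+_{kq}(\mathcal{F})$, since doubling preserves both sign and the pointwise constraints on the tangential component) is the only structural fact beyond Lemma~\ref{lem:coerciveDiscrete} that the argument requires.
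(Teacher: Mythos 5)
Your proposal is correct and follows the paper's proof essentially verbatim: the same test-function choices $\mu^{kq}=(0,\lambda_t^{kq})$, $(2\lambda_n^{kq},\lambda_t^{kq})$, $(\lambda_n^{kq},0)$ in \eqref{eq:DiscreteContactConstraints}, the same use of \eqref{eq:DiscreteVariationalEqualityPart} at $v^{hp}=u^{hp}$ together with Lemma~\ref{lem:coerciveDiscrete}, and the same upper bound via \eqref{eq:WeakMixedVarEq}. Your additional remark verifying admissibility of $(2\lambda_n^{kq},\lambda_t^{kq})$ is a sound observation that the paper leaves implicit.
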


\begin{corollary}
If $ \lambda^{kq} \in M_{kq}^+(\mathcal{F})$ or $\lambda^{kq} \in \tilde{M}^+_{kq}(\mathcal{F})$ with $q=1$, i.e.~$\lambda_n^{kq}\geq 0$, and if $g \geq 0$, then there exists a constant $C>0$, independent of $h$, $p$, $k$ and $q$, such that
\begin{align}
\alpha \left\|u^{hp}\right\|^2_{\tilde{H}^{1/2}(\Gamma_\Sigma)} + \left\|\gamma^{1/2} \lambda^{kq} \right\|^2_{L^2(\Gamma_C)}   \leq \left(C \left\|u\right\|_{\tilde{H}^{1/2}(\Gamma_\Sigma)} + \left\|\lambda\right\|_{\tilde{H}^{-1/2}(\Gamma_C)} \right) \left\|u^{hp}\right\|_{\tilde{H}^{1/2}(\Gamma_\Sigma)}.
\end{align}
\begin{proof}
 The assertion follows directly from Lemma \ref{lem:stability}.
\end{proof}

\end{corollary}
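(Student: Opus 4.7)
The plan is to revisit the proof of Lemma~\ref{lem:stability} and observe that the term $\langle g,\lambda_n^{kq}\rangle_{\Gamma_C}$, which was the only obstruction to a clean bound, acquires a favorable sign under the additional hypotheses.

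First, I would recall the key chain of inequalities produced in the proof of Lemma~\ref{lem:stability}, terminating in
\begin{align*}
\langle t,u^{hp}\rangle_{\Gamma_N} \;\geq\; \alpha\|u^{hp}\|^2_{\tilde H^{1/2}(\Gamma_\Sigma)} + \|\gamma^{1/2}\lambda^{kq}\|^2_{L^2(\Gamma_C)} + \langle g,\lambda_n^{kq}\rangle_{\Gamma_C},
\end{align*}
together with the upper bound $\langle t,u^{hp}\rangle_{\Gamma_N} \leq (C\|u\|_{\tilde H^{1/2}(\Gamma_\Sigma)} + \|\lambda\|_{\tilde H^{-1/2}(\Gamma_C)})\|u^{hp}\|_{\tilde H^{1/2}(\Gamma_\Sigma)}$ coming from \eqref{eq:WeakMixedVarEq} with $v=u^{hp}$.

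Next I would verify that $\lambda_n^{kq}\geq 0$ pointwise a.e.\ on $\Gamma_C$ under each of the two hypotheses. For $\lambda^{kq}\in M_{kq}^+(\mathcal{F})$ this is immediate: the nodal coefficients satisfy $(\mu_i^E)_n\geq 0$ and the Bernstein polynomials $B_{i,q_E}^E$ are nonnegative and form a partition of unity on $E$, so the linear combination remains nonnegative. For $\lambda^{kq}\in\tilde M_{kq}^+(\mathcal{F})$ with $q=1$, the function is affine on each element and nonnegative at the (Gauss–Lobatto) endpoints, hence nonnegative throughout. Combined with $g\geq 0$, this yields
\begin{align*}
\langle g,\lambda_n^{kq}\rangle_{\Gamma_C} \;=\; \int_{\Gamma_C} g\,\lambda_n^{kq}\,ds \;\geq\; 0.
\end{align*}

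Dropping the now nonnegative $\langle g,\lambda_n^{kq}\rangle_{\Gamma_C}$ on the right-hand side of the lower bound and combining with the upper bound on $\langle t,u^{hp}\rangle_{\Gamma_N}$ gives the asserted estimate without the spurious $\|g\|_{H^{1/2}(\Gamma_C)}\|\lambda_n^{kq}\|_{\tilde H^{-1/2}(\Gamma_C)}$ term. There is no real obstacle here; the only subtle point is the pointwise nonnegativity of $\lambda_n^{kq}$ under the two discretizations, which relies on the partition-of-unity property of Bernstein polynomials in one case and on the affine nature of the functions in the other.
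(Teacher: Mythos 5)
Your proof is correct and matches the paper's intended one-line argument. You correctly identify that one must return to the intermediate inequality in the proof of Lemma~\ref{lem:stability} (rather than the stated bound of the lemma itself), observe that the term $\langle g,\lambda_n^{kq}\rangle_{\Gamma_C}$ is nonnegative under the hypotheses $g\geq 0$ and $\lambda_n^{kq}\geq 0$, and drop it; your verification of $\lambda_n^{kq}\geq 0$ for both discretizations via nonnegativity of Bernstein polynomials, respectively affine interpolation with nonnegative nodal values, is also sound.
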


\section{A priori error estimates}
\label{sec:Apriorierrorestimations}

\begin{lemma}\label{lem:Lame:lagrangestaestimate}
Let $(u,\lambda) \in H^1(\Gamma_\Sigma) \times L^2(\Gamma_C)$, $(u^{hp},\lambda^{kq})$ be the solutions of \eqref{eq:MixedProblem}, \eqref{eq:DiscreteMixedProblem} respectively. There holds
\begin{align}\label{eq:Lame:lagrangestabesti}
  \left\| \gamma^{\frac{1}{2}}(\lambda-\lambda^{kq})\right\|^{2}_{ L_{2}(\Gamma_{C})}\leq-\left\langle \lambda-\lambda^{kq},u^{hp}-u\right\rangle_{\Gamma_C}+R
\end{align}
where for any $\mu \in L^2(\Gamma_C) \cap M^+(\mathcal{F})$, $\mu^{kq} \in M^+_{kq}(\mathcal{F})$ we define
\begin{align}\label{eq:Lame:lagrangestabestiR}
R:=&\left\langle \lambda^{kq}-\mu ,u\right\rangle_{\Gamma_C}
 +\left\langle \lambda-\mu^{kq},u^{hp}+\gamma(-\lambda^{kq}-Su^{hp})\right\rangle_{\Gamma_C}
 -\left\langle \gamma(\lambda-\lambda^{kq}),S( u-u^{hp})\right\rangle_{\Gamma_C}\nonumber\\
&-\left\langle \gamma(\mu^{kq}-\lambda^{kq}) ,E_{hp} u^{hp}\right\rangle_{\Gamma_C}+\left\langle g,\mu_{n}^{kq}-\lambda_{n}^{kq}+\mu_n-\lambda_n\right\rangle_{\Gamma_C}\ .
\end{align}

\begin{proof}
First note that
\begin{align}
  \left\|\gamma^{\frac{1}{2}}(\lambda-\lambda^{kq}) \right\|^{2}_{ L_{2}(\Gamma_{C})}= \left\langle \gamma\lambda,\lambda \right\rangle_{\Gamma_C}-2\left\langle \gamma\lambda,\lambda^{kq} \right\rangle_{\Gamma_C}+
 \left\langle \gamma \lambda^{kq},\lambda^{kq}\right\rangle_{\Gamma_C}.
 \end{align}
Rearranging \eqref{eq:DiscreteContactConstraints} we get for all $\mu^{kq} \in  M^+_{kq}(\mathcal{F})$
\begin{align*}
 \left\langle \gamma\lambda^{kq},\lambda^{kq}\right\rangle_{\Gamma_C} \leq  \left\langle \gamma\lambda^{kq},\mu^{kq} \right\rangle_{\Gamma_C}- \left\langle \mu^{kq}-\lambda^{kq},u^{hp} \right\rangle_{\Gamma_C}
+ \left\langle \gamma(\mu^{kq}-\lambda^{kq}), S_{hp} u^{hp}\right\rangle_{\Gamma_C}+\left\langle g,\mu_{n}^{kq}-\lambda_{n}^{kq}\right\rangle_{\Gamma_C}
\end{align*}
and from \eqref{eq:ContContactConstraints} with $\lambda=-Su$ in $L^2(\Gamma_{C})$ if $\lambda \in L^2(\Gamma_C)$ we get
\begin{align*}
 \left\langle \gamma\lambda,\lambda \right\rangle_{\Gamma_C}  \leq  \left\langle \gamma\lambda,\mu  \right\rangle_{\Gamma_C} - \left\langle \mu-\lambda,u \right\rangle_{\Gamma_C} +
 \left\langle \gamma(\mu-\lambda) ,Su\right\rangle_{\Gamma_C} + \left\langle g,\mu_{n} -\lambda_{n}\right\rangle_{\Gamma_C} \quad \forall \mu \in L^2(\Gamma_C) \cap M^+(\mathcal{F}).
\end{align*}
This gives
\begin{align*}
 &\!\!\!\!  \left\|\gamma^{\frac{1}{2}}(\lambda-\lambda^{kq}) \right\|^{2}_{ L_{2}(\Gamma_{C})}\leq
 \left\langle \gamma(\mu-\lambda^{kq}),\lambda\right\rangle_{\Gamma_C} +\left\langle \gamma(\mu^{kq}-\lambda),\lambda^{kq}\right\rangle_{\Gamma_C}
+ \left\langle \lambda-\mu,u\right\rangle_{\Gamma_C} \\
&+ \left\langle \gamma(\mu-\lambda) ,Su\right\rangle_{\Gamma_C}
+ \left\langle \lambda^{kq}-\mu^{kq},u^{hp} \right\rangle_{\Gamma_C}+ \left\langle\gamma(\mu^{kq}-\lambda^{kq}), S_{hp}u^{hp}\right\rangle_{\Gamma_C}+\left\langle g,\mu_{n}^{kq}-\lambda_{n}^{kq}+\mu_n-\lambda_n \right\rangle_{\Gamma_C}\\
=&  \left\langle\gamma(\mu-\lambda^{kq}),\lambda \right\rangle_{\Gamma_C}+ \left\langle\gamma(\mu^{kq}-\lambda),\lambda^{kq}\right\rangle_{\Gamma_C}
+ \left\langle \lambda-\mu,u\right\rangle_{\Gamma_C}
+ \left\langle \gamma(\mu-\lambda), Su\right\rangle_{\Gamma_C}\\
&+\left\langle (\lambda^{kq}-\mu^{kq}),u^{hp} \right\rangle_{\Gamma_C}+\left\langle g,\mu_{n}^{kq}-\lambda_{n}^{kq}+\mu_n-\lambda_n \right\rangle_{\Gamma_C}
+\left\langle \gamma(\mu^{kq}-\lambda^{kq}) ,S u^{hp}\right\rangle_{\Gamma_C}\\
&-\left\langle \gamma(\mu^{kq}-\lambda^{kq}) ,E_{hp} u^{hp}\right\rangle_{\Gamma_C}.
\end{align*}
\end{proof}

\end{lemma}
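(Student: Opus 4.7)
The plan is to expand the squared $L_2$-norm as
$\| \gamma^{1/2}(\lambda-\lambda^{kq})\|_{L_2(\Gamma_C)}^2 = \langle\gamma\lambda,\lambda\rangle_{\Gamma_C} -2\langle\gamma\lambda,\lambda^{kq}\rangle_{\Gamma_C} + \langle\gamma\lambda^{kq},\lambda^{kq}\rangle_{\Gamma_C}$, bound the two diagonal pieces by means of the two complementarity conditions \eqref{eq:DiscreteContactConstraints} and \eqref{eq:ContContactConstraints}, carry the cross term along, and finally rearrange all the $\gamma$-weighted quadratic contributions so that the target pairing $-\langle\lambda-\lambda^{kq}, u^{hp}-u\rangle_{\Gamma_C}$ emerges and the leftover exactly matches $R$ as defined in \eqref{eq:Lame:lagrangestabestiR}.

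For the discrete piece, I would insert an arbitrary $\mu^{kq}\in M^+_{kq}(\mathcal{F})$ into \eqref{eq:DiscreteContactConstraints} and rearrange to obtain an upper bound on $\langle\gamma\lambda^{kq},\lambda^{kq}\rangle_{\Gamma_C}$ involving $\langle\gamma\mu^{kq},\lambda^{kq}\rangle_{\Gamma_C}$, $-\langle\mu^{kq}-\lambda^{kq},u^{hp}\rangle_{\Gamma_C}$, $\langle\gamma(\mu^{kq}-\lambda^{kq}),S_{hp}u^{hp}\rangle_{\Gamma_C}$ and the $g$-pairing. For the continuous piece, I would exploit the hypothesis $\lambda\in L_2(\Gamma_C)$: together with the identity $\lambda=-Su|_{\Gamma_C}$ (read off from \eqref{eq:WeakMixedVarEq}) it allows \eqref{eq:ContContactConstraints} to be tested against any $\mu\in L_2(\Gamma_C)\cap M^+(\mathcal{F})$ with all dualities realised as $L_2$-inner products, producing the analogous bound for $\langle\gamma\lambda,\lambda\rangle_{\Gamma_C}$ with $\mu, u, Su$ in the roles of $\mu^{kq}, u^{hp}, S_{hp}u^{hp}$.

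Adding the two bounds and subtracting $2\langle\gamma\lambda,\lambda^{kq}\rangle_{\Gamma_C}$, I would collect the $\gamma$-weighted bilinear contributions into the compact form $\langle\gamma(\mu-\lambda^{kq}),\lambda\rangle_{\Gamma_C} + \langle\gamma(\mu^{kq}-\lambda),\lambda^{kq}\rangle_{\Gamma_C}$, and then substitute $S = S_{hp}+E_{hp}$ in the discrete term $\langle\gamma(\mu^{kq}-\lambda^{kq}), S_{hp}u^{hp}\rangle_{\Gamma_C}$ so that the $E_{hp}u^{hp}$-contribution lands on exactly the summand $-\langle\gamma(\mu^{kq}-\lambda^{kq}),E_{hp}u^{hp}\rangle_{\Gamma_C}$ of $R$. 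Using $\lambda=-Su$ once more, the remaining $\gamma$-inner products can be reshuffled into $\langle\lambda-\mu^{kq},\gamma(-\lambda^{kq}-Su^{hp})\rangle_{\Gamma_C} - \langle\gamma(\lambda-\lambda^{kq}),S(u-u^{hp})\rangle_{\Gamma_C}$; the four $g$-terms collect into the single combined $g$-pairing in $R$.

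The main obstacle is the bookkeeping of the non-$\gamma$ contributions $\langle\lambda-\mu,u\rangle_{\Gamma_C}+\langle\lambda^{kq}-\mu^{kq},u^{hp}\rangle_{\Gamma_C}$ produced by the two inequalities. These do not by themselves equal the target cross pairing; rather, by adding and subtracting $\langle\lambda^{kq},u\rangle_{\Gamma_C}+\langle\lambda,u^{hp}\rangle_{\Gamma_C}$ I would split them as
$-\langle\lambda-\lambda^{kq},u^{hp}-u\rangle_{\Gamma_C} + \langle\lambda^{kq}-\mu,u\rangle_{\Gamma_C} + \langle\lambda-\mu^{kq},u^{hp}\rangle_{\Gamma_C}$, so that the first summand is the desired pairing while the other two line up with the first two summands of $R$. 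A subsidiary point, which must be checked before Step~3, is that the regularity $\lambda\in L_2(\Gamma_C)$ indeed legitimises both the $L_2$-based testing of \eqref{eq:ContContactConstraints} and the pointwise use of $\lambda+Su=0$ on $\Gamma_C$.
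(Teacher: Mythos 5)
Your proposal reproduces the paper's argument step for step: expand the $\gamma$-weighted square, bound $\langle\gamma\lambda^{kq},\lambda^{kq}\rangle_{\Gamma_C}$ from \eqref{eq:DiscreteContactConstraints} and $\langle\gamma\lambda,\lambda\rangle_{\Gamma_C}$ from \eqref{eq:ContContactConstraints} using $\lambda=-Su$ in $L^2(\Gamma_C)$, add, subtract the cross term, and then split $S_{hp}=S-E_{hp}$ and regroup to exhibit $-\langle\lambda-\lambda^{kq},u^{hp}-u\rangle_{\Gamma_C}+R$. The only difference is that you spell out the final algebraic reshuffling (including the insertion of $\pm\langle\lambda^{kq},u\rangle_{\Gamma_C}\pm\langle\lambda,u^{hp}\rangle_{\Gamma_C}$) which the paper leaves implicit after its displayed chain of (in)equalities.
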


\begin{theorem}\label{thm:Lame:apriori1}
 Let $( u, \lambda)$, $( u^{hp}, \lambda^{kq})$  be the solutions of \eqref{eq:MixedProblem}, \eqref{eq:DiscreteMixedProblem}, respectively. If $u\in{H}^{1}(\Gamma_\Sigma)$
 and $ \lambda\in L^{2}(\Gamma_{C})$, then there holds with arbitrary $v^{hp}\in\mathcal{V}_{hp}$, $v\in\tilde{H}^{1/2}(\Gamma_\Sigma)$, $\phi^{hp}\in\mathcal{V}_{hp}^{D}$, $\mu\in M^+(\mathcal{F})\cap L^2(\Gamma_{C})$
\begin{align*}
&\alpha_{1}\lVert u- u^{hp}\lVert^{2}_{\tilde{H}^{\frac{1}{2}}(\Gamma_\Sigma)}+
\alpha_{2} \lVert \psi-\psi^{hp}\lVert^{2}_{\tilde{H}^{-\frac{1}{2}}(\Gamma_\Sigma)}+
\alpha_{3}\lVert\gamma^{\frac{1}{2}}(\lambda-\lambda^{kq})\lVert^{2}_{L^2(\Gamma_{C})}\\
&\leq\alpha_{4}\lVert u- v^{hp}\lVert^{2}_{\tilde{H}^{\frac{1}{2}}(\Gamma_\Sigma)}
+\alpha_{5}\lVert\psi-\phi^{hp}\lVert^{2}_{\tilde{H}^{-\frac{1}{2}}(\Gamma_\Sigma)}
+\frac{1}{\epsilon}\lVert\gamma^{\frac{1}{2}}S( u- v^{hp})\lVert^{2}_{L^2(\Gamma_{C})}\\
&+\lVert t-S u\lVert_{L^2(\Gamma_\Sigma)}(\lVert u^{hp}- v\lVert_{L^2(\Gamma_\Sigma)}+\lVert u- v^{hp}\lVert_{L^2(\Gamma_\Sigma)})
+\langle \lambda, v- u^{hp}\rangle_{\Gamma_{C}}-\langle \lambda^{kq}, u-v^{hp}\rangle_{\Gamma_{C}}
+\langle \lambda^{kq}-\mu, u\rangle_{\Gamma_{C}}\\
&+\langle\gamma(\lambda+Su^{hp}),S(u^{hp}-v^{hp})\rangle_{\Gamma_{C}}
- \langle\gamma(\lambda^{kq}+Su^{hp}),E_{hp}(u^{hp}-v^{hp})\rangle_{\Gamma_{C}}\\
&{+}\langle \lambda-\mu^{kq}, u^{hp}+\gamma(-\lambda^{kq}-Su^{hp})\rangle_{\Gamma_{C}}
+\langle\gamma E_{hp}(u^{hp}),S(u^{hp}-v^{hp})\rangle_{\Gamma_{C}}
 + \langle\gamma E_{hp}(u^{hp}),E_{hp}(u^{hp}-v^{hp})\rangle_{\Gamma_{C}}\\
&+\langle \gamma (\lambda^{kq}-\mu^{kq}),E_{hp}(u^{hp})\rangle_{\Gamma_{C}}+\left\langle g,\mu_{n}^{kq}-\lambda_{n}^{kq}+\mu_{n}-\lambda_{n}\right\rangle_{\Gamma_C}
\end{align*}
where the constants $\alpha_{1}=2C_{W}-3\epsilon$, $\alpha_{2}=2C_{V}-\epsilon$, $\alpha_{3}=2-\epsilon$,  $\alpha_{4}=\frac{C_{S}^{2}}{\epsilon}+\frac{C_{E_{h}}^{2}}{\epsilon}+C_{0}$, $\alpha_{5}=C_{0}+\frac{1}{\epsilon}(C_{K}+\frac{1}{2})^{2}+\frac{1}{\epsilon}C_{V}^{2}$
are independent of h, k, p and q;  $\alpha_{1}$, $\alpha_{2}$ and $\alpha_{3}$ are positive if $\epsilon$ is small enough.
\begin{proof}
Recall that $E_{hp}=S-S_{hp}$, i.e.~$Su - S_{hp}u^{hp}=S(u-u^{hp}) -E_{hp}u^{hp}$. Then by the construction of $\psi^*_{hp}$ and the coercivity of $W$ and $V$, there holds for all $v^{hp} \in \mathcal{V}_{hp}$
\begin{align*}
C_{W} &  \left\|u-u^{hp}\right\|^2_{\tilde{H}^{1/2}(\Gamma_\Sigma)} +C_{V} \left\|\psi-\psi^{hp}\right\|^2_{\tilde{H}^{-1/2}(\Gamma_\Sigma)} \\
& \leq  \left\langle Su - S_{hp}u^{hp},u-u^{hp}\right\rangle_{\Gamma_\Sigma} +  \left\langle V(\psi^*_{hp}-\psi^{hp}),\psi-\psi^{hp}\right\rangle_{\Gamma_\Sigma} \\
&= \left\langle S(u - u^{hp}),u-v^{hp}\right\rangle_{\Gamma_\Sigma} +  \left\langle S(u^{hp}-u),u^{hp}-v^{hp}\right\rangle_{\Gamma_\Sigma} \\&\qquad + \left\langle E_{hp} u^{hp},u-u^{hp}\right\rangle_{\Gamma_\Sigma} + \left\langle V(\psi^*_{hp}-\psi^{hp}),\psi-\psi^{hp}\right\rangle_{\Gamma_\Sigma}
\end{align*}
Since $\mathcal{V}_{hp} \subset \tilde{H}^{1/2}(\Gamma_\Sigma)$, $E_{hp}=S-S_{hp}$, using \eqref{eq:DiscreteMixedProblem}, we have
\begin{align*}
\left\langle Su^{hp},u^{hp}-v^{hp}\right\rangle_{\Gamma_\Sigma} & = \left\langle E_{hp}u^{hp},u^{hp}-v^{hp}\right\rangle_{\Gamma_\Sigma}
-\left\langle \lambda^{kq},u^{hp}-v^{hp} \right\rangle_{\Gamma_C} \\ &\qquad +  \left\langle  \gamma \left(\lambda^{kq} + S_{hp}u^{hp}\right), S_{hp}(u^{hp}-v^{hp}) \right\rangle_{\Gamma_C} + \left\langle t,u^{hp}-v^{hp} \right\rangle_{\Gamma_N}
\end{align*}
Hence,
\begin{align*}
C_{W} & \left\|u-u^{hp}\right\|^2_{\tilde{H}^{1/2}(\Gamma_\Sigma)} + C_{V}\left\|\psi-\psi^{hp}\right\|^2_{\tilde{H}^{-1/2}(\Gamma_\Sigma)} \\ 
&\leq  \left\langle S(u - u^{hp}),u-v^{hp}\right\rangle_{\Gamma_\Sigma} +\left\langle Su,v^{hp}-u^{hp}\right\rangle_{\Gamma_\Sigma}+  \left\langle E_{hp}u^{hp},u-v^{hp}\right\rangle_{\Gamma_\Sigma}\\
&\qquad - \left\langle \lambda^{kq},u^{hp}-v^{hp} \right\rangle_{\Gamma_C}
 +  \left\langle  \gamma \left(\lambda^{kq} + S_{hp}u^{hp}\right), S_{hp}(u^{hp}-v^{hp}) \right\rangle_{\Gamma_C} \\&\qquad + \left\langle V(\psi^*_{hp}-\psi^{hp}),\psi-\psi^{hp}\right\rangle_{\Gamma_\Sigma}+\left\langle t,u^{hp}-v^{hp} \right\rangle_{\Gamma_N} \ .
\end{align*}
Now, the individual terms can be bounded by Cauchy-Schwarz inequality and Young's inequality, namely
\begin{align*}
  \left\langle S(u - u^{hp}),u-v^{hp}\right\rangle_{\Gamma_\Sigma} & \leq C \left\|u - u^{hp}\right\|_{\tilde{H}^{1/2}(\Gamma_\Sigma)} \left\|u - v^{hp}\right\|_{\tilde{H}^{1/2}(\Gamma_\Sigma)}\\& \leq  \epsilon \left\|u - u^{hp}\right\|^2_{\tilde{H}^{1/2}(\Gamma_\Sigma)} + \frac{C^2}{4\epsilon} \left\|u - v^{hp}\right\|^2_{\tilde{H}^{1/2}(\Gamma_\Sigma)}
\end{align*}
and additionally with Lemma~\ref{lem:SteklovApprox} we have
\begin{align*}
  \left\langle E_{hp}u^{hp},u-v^{hp}\right\rangle_{\Gamma_\Sigma} &=\left\langle E_{hp}(u^{hp}-u +u),u-v^{hp}\right\rangle_{\Gamma_\Sigma} \\&\leq C_E \left( \left\|u\right\|_{\tilde{H}^{1/2}(\Gamma_\Sigma)} \left\|u - v^{hp}\right\|_{\tilde{H}^{1/2}(\Gamma_\Sigma)}  + \left\|u-u^{hp}\right\|_{\tilde{H}^{1/2}(\Gamma_\Sigma)} \left\|u - v^{hp}\right\|_{\tilde{H}^{1/2}(\Gamma_\Sigma)} \right) \\
	&\leq C_E \left( \left\|u\right\|_{\tilde{H}^{1/2}(\Gamma_\Sigma)} \left\|u - v^{hp}\right\|_{\tilde{H}^{1/2}(\Gamma_\Sigma)} + \epsilon \left\|u-u^{hp}\right\|^2_{\tilde{H}^{1/2}(\Gamma_\Sigma)} +\frac{1}{4\epsilon}\left\|u - v^{hp}\right\|^2_{\tilde{H}^{1/2}(\Gamma_\Sigma)} \right).
\end{align*}
From Lemma~\ref{lem:V_ortho} it follows with $\phi^{hp} \in \mathcal{V}_{hp}^D$ that
\begin{align*}
 \left\langle V(\psi^*_{hp}-\psi^{hp}),\psi-\psi^{hp}\right\rangle_{\Gamma_\Sigma}&=\left\langle V(\psi^*_{hp}-\psi^{hp}),\psi-\phi^{hp}\right\rangle_{\Gamma_\Sigma}\\
 &\leq \left[ \left(C_K +\frac{1}{2} \right)\left\|u-u^{hp}\right\|_{\tilde{H}^{1/2}(\Gamma_\Sigma)} + \left\|\psi-\psi^{hp}\right\|_{\tilde{H}^{-1/2}(\Gamma_\Sigma)}\right] \left\|\psi-\phi^{hp}\right\|_{\tilde{H}^{-1/2}(\Gamma_\Sigma)}.
\end{align*}
Collecting the above terms with \eqref{eq:WeakMixedVarEq}  we obtain for arbitrary $v\in\tilde{H}^{1/2}(\Gamma_\Sigma)$
\begin{align*}
&C_W \lVert u- u^{hp}\lVert^{2}_{\tilde{H}^{\frac{1}{2}}(\Gamma_\Sigma)}+C_V \lVert \psi-\psi^{hp}\lVert^{2}_{\tilde{H}^{-\frac{1}{2}}(\Gamma_\Sigma)}
-\langle  \lambda- \lambda^{kq}, u^{hp}- u\rangle_{\Gamma_{C}}\\&\leq   \langle S( u- u^{hp}) , u- v^{hp}\rangle_{\Gamma_\Sigma}+\langle E_{hp} u^{hp} , u- v^{hp}\rangle_{\Gamma_\Sigma}\\
  &\qquad +\langle t-S u, u^{hp}- v\rangle_{\Gamma_N}+\langle t-S u, u- v^{hp}\rangle_{\Gamma_N}
 +\left\langle \lambda, v- u^{hp}\right\rangle_{\Gamma_C}-\left\langle  \lambda^{kq}, u- v^{hp}\right\rangle_{\Gamma_C}\\ &\qquad 
 + \langle V(\psi^{*}_{hp}-\psi^{hp}),\psi-\psi^{hp}\rangle_{\Gamma_\Sigma}
  +\left\langle  \gamma(\lambda^{kq}+S_{hp} u^{hp}),S_{hp}( u^{hp}-v^{hp})\right\rangle_{\Gamma_C}\ .
\end{align*}
Now with Lemma \ref{lem:Lame:lagrangestaestimate} we have for arbitrary $v^{hp}\in\mathcal{V}_{hp}$ and $v\in\tilde{H}^{1/2}(\Gamma_\Sigma)$
\begin{align*}
 &C_W \lVert u- u^{hp}\lVert^{2}_{\tilde{H}^{\frac{1}{2}}(\Gamma_\Sigma)}+ C_V\lVert \psi-\psi^{hp}\lVert^{2}_{\tilde{H}^{-\frac{1}{2}}(\Gamma_\Sigma)}
  +\lVert\gamma^{\frac{1}{2}}(\lambda-\lambda^{kq})\lVert^{2}_{ L^2(\Gamma_{C})}\\
&  \leq\left\langle S( u- u^{hp}) , u- v^{hp}\right\rangle_{\Gamma_\Sigma}+\left\langle E_{hp} u^{hp} , u- v^{hp}\right\rangle_{\Gamma_C}
 +\left\langle t-S u, u^{hp}- v\right\rangle_{\Gamma_N}
 +\left\langle t-S u, u- v^{hp}\right\rangle_{\Gamma_N}\\
&\qquad + \left\langle  \lambda, v- u^{hp}\right\rangle_{\Gamma_C}- \left\langle \lambda^{kq}, u- v^{hp}\right\rangle_{\Gamma_C}
 + \langle V(\psi^{*}_{hp}-\psi^{hp}),\psi-\psi^{hp}\rangle_{\Gamma_\Sigma}  \\
&\qquad+\left\langle  \gamma(\lambda^{kq}+S_{hp} u^{hp}),S_{hp}( u^{hp}- v^{hp})\right\rangle_{\Gamma_C}
+\left\langle  \lambda^{kq}-\mu,u\right\rangle_{\Gamma_C}+\left\langle  \lambda-\mu^{kq},u^{hp}+\gamma(-\lambda^{kq}-S u^{hp}\right\rangle_{\Gamma_C} \\
&\qquad+\left\langle  \gamma(\lambda^{kq}-\mu^{kq}), E_{hp}( u^{hp})\right\rangle_{\Gamma_C}+\left\langle g,\mu_{n}^{kq}-\lambda_{n}^{kq}+\mu_{n}-\lambda_{n}\right\rangle_{\Gamma_C}
 -\left\langle  \gamma(\lambda-\lambda^{kq}),S( u- u^{hp})\right\rangle_{\Gamma_C}
\end{align*}
Note that
\begin{align*}
&\left\langle  \gamma(\lambda^{kq}+S_{hp} u^{hp}),S_{hp}( u^{hp}- v^{hp})\right\rangle_{\Gamma_C}-\left\langle  \gamma(\lambda-\lambda^{kq}),S( u- u^{hp})\right\rangle_{\Gamma_C}\\
&=
  -\left\langle  \gamma(\lambda-\lambda^{kq}),S( u-v^{hp})\right\rangle_{\Gamma_C} +\left\langle  \gamma(\lambda+Su^{hp}),S( u^{hp}- v^{hp})\right\rangle_{\Gamma_C}
  -\left\langle  \gamma(\lambda^{kq}+Su^{hp}),E_{hp}( u^{hp}- v^{hp})\right\rangle_{\Gamma_C}\\
\qquad  &-\left\langle  \gamma E_{hp}( u^{hp}),S( u^{hp}- v^{hp})\right\rangle_{\Gamma_C}
+\left\langle  \gamma E_{hp}( u^{hp}),E_{hp}( u^{hp}- v^{hp})\right\rangle_{\Gamma_C}\ .
\end{align*}
Finally, applying continuity of $S, S_{hp}, E_{hp}$ and Cauchy and Young's inequalities yields the assertion of the theorem.

\end{proof}

\end{theorem}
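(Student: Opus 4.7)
The strategy is to produce a single master inequality by summing three contributions: an $\tilde H^{1/2}$-coercivity bound for $u-u^{hp}$ coming from $W$, an $\tilde H^{-1/2}$-coercivity bound for $\psi-\psi^{hp}$ coming from $V$, and the $L^2$-bound for the stabilization of $\lambda-\lambda^{kq}$ supplied by Lemma~\ref{lem:Lame:lagrangestaestimate}. First I would exploit the decomposition $S=W+(K^\top+\tfrac12)V^{-1}(K+\tfrac12)$ together with the definitions \eqref{eq:Lame:defpsi} of $\psi,\psi^*_{hp},\psi^{hp}$ to obtain
\begin{equation*}
C_W\|u-u^{hp}\|^2_{\tilde H^{1/2}(\Gamma_\Sigma)}+C_V\|\psi-\psi^{hp}\|^2_{\tilde H^{-1/2}(\Gamma_\Sigma)}\le \langle Su-S_{hp}u^{hp},u-u^{hp}\rangle_{\Gamma_\Sigma}+\langle V(\psi^*_{hp}-\psi^{hp}),\psi-\psi^{hp}\rangle_{\Gamma_\Sigma}.
\end{equation*}
Splitting $Su-S_{hp}u^{hp}=S(u-u^{hp})-E_{hp}u^{hp}$ and inserting an arbitrary $v^{hp}\in\mathcal V_{hp}$ introduces the approximation quantities $u-v^{hp}$ and $u^{hp}-v^{hp}$ that will absorb error on the right-hand side.

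Second, I would eliminate the remaining Galerkin term $\langle Su^{hp},u^{hp}-v^{hp}\rangle_{\Gamma_\Sigma}$ using the discrete variational equation \eqref{eq:DiscreteVariationalEqualityPart}: writing $\langle S u^{hp},\cdot\rangle = \langle S_{hp}u^{hp},\cdot\rangle+\langle E_{hp}u^{hp},\cdot\rangle$, and invoking the continuous equation \eqref{eq:WeakMixedVarEq} for an arbitrary $v\in\tilde H^{1/2}(\Gamma_\Sigma)$, produces the Neumann residuals $\langle t-Su,u^{hp}-v\rangle_{\Gamma_N}$ and $\langle t-Su,u-v^{hp}\rangle_{\Gamma_N}$, the duality pairings with $\lambda$ and $\lambda^{kq}$ on $\Gamma_C$, and a stabilization cross term $\langle\gamma(\lambda^{kq}+S_{hp}u^{hp}),S_{hp}(u^{hp}-v^{hp})\rangle_{\Gamma_C}$. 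For the $V$-term I would apply Lemma~\ref{lem:V_ortho} with any $\phi^{hp}\in\mathcal V_{hp}^D$ to replace $\psi-\psi^{hp}$ by $\psi-\phi^{hp}$, and then estimate via the mapping properties of $K$ and $V$ so that only $\|\psi-\phi^{hp}\|_{\tilde H^{-1/2}}$ and $\|u-u^{hp}\|_{\tilde H^{1/2}}$ appear.

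Third, I would invoke Lemma~\ref{lem:Lame:lagrangestaestimate} (with the very same $\mu,\mu^{kq}$ that appear in the statement) to add $\|\gamma^{1/2}(\lambda-\lambda^{kq})\|^2_{L^2(\Gamma_C)}$ on the left, at the cost of introducing $-\langle\lambda-\lambda^{kq},u^{hp}-u\rangle_{\Gamma_C}$ and the residual $R$ on the right. The $\lambda-\lambda^{kq}$ contact pairing then cancels precisely against the corresponding contact terms produced in the previous step, leaving a linear combination of the $\lambda$-$v^{hp}$ and $\mu^{kq}$-$u^{hp}$ pairings listed in the statement. The crucial algebraic identity is to rewrite the two stabilization cross terms jointly using $E_{hp}=S-S_{hp}$:
\begin{align*}
&\langle\gamma(\lambda^{kq}+S_{hp}u^{hp}),S_{hp}(u^{hp}-v^{hp})\rangle_{\Gamma_C}-\langle\gamma(\lambda-\lambda^{kq}),S(u-u^{hp})\rangle_{\Gamma_C}\\
&\quad =-\langle\gamma(\lambda-\lambda^{kq}),S(u-v^{hp})\rangle_{\Gamma_C}+\langle\gamma(\lambda+Su^{hp}),S(u^{hp}-v^{hp})\rangle_{\Gamma_C}\\
&\qquad -\langle\gamma(\lambda^{kq}+Su^{hp}),E_{hp}(u^{hp}-v^{hp})\rangle_{\Gamma_C}-\langle\gamma E_{hp}u^{hp},S(u^{hp}-v^{hp})\rangle_{\Gamma_C}+\langle\gamma E_{hp}u^{hp},E_{hp}(u^{hp}-v^{hp})\rangle_{\Gamma_C},
\end{align*}
which is the form matching the right-hand side of the theorem.

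Finally, I would close the estimate by applying Cauchy-Schwarz and Young's inequality $2ab\le\epsilon a^2+\epsilon^{-1}b^2$ to all residual terms that still pair error quantities with approximation quantities. Using $\tilde H^{1/2}$-continuity of $S$ and $S_{hp}$, the bound on $E_{hp}$ from Lemma~\ref{lem:SteklovApprox}, and the Cauchy-Schwarz bound for the $\gamma(\lambda-\lambda^{kq})\cdot S(u-v^{hp})$ term, all occurrences of $\|u-u^{hp}\|^2_{\tilde H^{1/2}}$, $\|\psi-\psi^{hp}\|^2_{\tilde H^{-1/2}}$ and $\|\gamma^{1/2}(\lambda-\lambda^{kq})\|^2_{L^2}$ that appear on the right can be absorbed into the left by choosing $\epsilon$ small enough, producing precisely the coefficients $\alpha_1=2C_W-3\epsilon$, $\alpha_2=2C_V-\epsilon$, $\alpha_3=2-\epsilon$. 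The main obstacle is purely bookkeeping: the identity above rewriting the stabilization cross terms is delicate and a single sign error propagates through the remaining Young estimates, so the safest route is to insert $\pm S_{hp}u^{hp}$, $\pm\lambda^{kq}$ and $\pm E_{hp}u^{hp}$ systematically before estimating anything.
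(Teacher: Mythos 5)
Your plan reproduces the paper's proof essentially step for step: the same $W$/$V$ coercivity starting point, the same insertion of $v^{hp}$ and elimination of $\langle Su^{hp},u^{hp}-v^{hp}\rangle_{\Gamma_\Sigma}$ via \eqref{eq:DiscreteVariationalEqualityPart} and \eqref{eq:WeakMixedVarEq}, the same use of Lemma~\ref{lem:V_ortho} and Lemma~\ref{lem:Lame:lagrangestaestimate}, and the identical algebraic identity for regrouping the stabilization cross terms before closing with Cauchy--Schwarz and Young. There is no substantive deviation from the paper's argument.
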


\begin{theorem}\label{thm:Lame:apriori2}
Let $( u, \lambda)\in \tilde{H}^{1/2}(\Gamma_\Sigma)\times M^+(\mathcal{F})$ with $u\in H^{1+\alpha}(\Gamma_\Sigma)$, $\lambda \in H^\alpha(\Gamma_C)$ and $(u^{hp}, \lambda^{kq}) \in \mathcal{V}_{hp} \times \tilde{M}^+_{kq}(\mathcal{F}) $ be the solutions of \eqref{eq:MixedProblem}, \eqref{eq:DiscreteMixedProblem}, respectively, with $g \equiv 0$ and $\alpha\in[0,\frac{1}{2})$. Suppose $\lVert\lambda_{n}\lVert_{H^{\alpha}(\Gamma_{C})}+\lVert\lambda_{t}\lVert_{H^{\alpha}(\Gamma_{C})}+ \lVert \mathcal{F}\lVert_{L^{2}(\Gamma_{C})} \lesssim\lVert u\lVert_{{ H}^{1+\alpha}(\Gamma_\Sigma)}$, then there exists a constant $C>0$ independent of $h$, $p$, $k$ and $q$, such that there holds with $ \psi$, $\psi^{hp}$ in \eqref{eq:Lame:defpsi}
\begin{align}\label{eq:Lame:apriori2}
&  \lVert u-u^{hp}\lVert_{\tilde{ H}^{\frac{1}{2}}(\Gamma_\Sigma)}+ \lVert \psi-\psi^{hp}\lVert_{\tilde{ H}^{-\frac{1}{2}}(\Gamma)}+
\lVert\gamma^{\frac{1}{2}}(\lambda-\lambda^{kq}) \lVert_{L^{2}(\Gamma_{C})}\\
 &\leq C\left(\frac{k^{\alpha+\frac{1}{2}}}{q^{\alpha+\frac{1}{2}}}+\frac{h^{\frac{1}{2}} k^{\alpha}}{pq^{\alpha}}\right)\lVert  u\lVert_{{ H}^{1+\alpha}(\Gamma_\Sigma)}
 +\inf_{\mu\in M^+(\mathcal{F})}\int_{\Gamma_{C}} (\lambda^{kq}-\mu)u\:ds \ .
\end{align}
\end{theorem}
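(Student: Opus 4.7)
The plan is to combine the abstract error identity of Theorem~\ref{thm:Lame:apriori1} with standard hp-approximation properties on the meshes $\mathcal{T}_h$ and $\hat{\mathcal{T}}_k$. Three immediate simplifications reduce the abstract bound. Testing \eqref{eq:WeakMixedVarEq} against $v\in\tilde{H}^{1/2}(\Gamma_\Sigma)$ supported in $\Gamma_N$ yields $Su=t$ on $\Gamma_N$, eliminating the two $\langle t-Su,\cdot\rangle_{\Gamma_N}$ terms; the hypothesis $g\equiv 0$ kills the $\langle g,\cdot\rangle_{\Gamma_C}$ contribution; and choosing the free auxiliary $v=u^{hp}\in\mathcal{V}_{hp}$ eliminates $\langle\lambda,v-u^{hp}\rangle_{\Gamma_C}$. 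The continuous multiplier $\mu\in M^+(\mathcal{F})\cap L^2(\Gamma_C)$ is kept free so that, after bounding the remaining quantities, the term $\langle\lambda^{kq}-\mu,u\rangle_{\Gamma_C}$ is left as the nonconformity infimum in the claim.

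Next I would insert the following approximants. For the primal field, let $v^{hp}\in\mathcal{V}_{hp}$ be an hp-quasi-interpolant of $u\in H^{1+\alpha}(\Gamma_\Sigma)$ with $\|u-v^{hp}\|_{\tilde{H}^{1/2}(\Gamma_\Sigma)}\lesssim (h/p)^{1/2+\alpha}\|u\|_{H^{1+\alpha}(\Gamma_\Sigma)}$. For $\psi=V^{-1}(K+\tfrac12)u$, the mapping properties of $V^{-1}(K+\tfrac12)$ give $\psi\in H^\alpha(\Gamma_\Sigma)$, and a best $\tilde H^{-1/2}$-approximation $\phi^{hp}\in\mathcal{V}^D_{hp}$ satisfies $\|\psi-\phi^{hp}\|_{\tilde H^{-1/2}(\Gamma_\Sigma)}\lesssim (h/p)^{1/2+\alpha}\|u\|_{H^{1+\alpha}}$. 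For the multiplier I would build $\mu^{kq}\in\tilde M^+_{kq}(\mathcal{F})$ from a Gauss--Lobatto interpolant of $\lambda\in H^\alpha(\Gamma_C)$, exploiting that $\alpha<1/2$ permits a nodewise truncation enforcing $\mu^{kq}_n\geq 0$ and $|\mu^{kq}_t|\leq\mathcal{F}$ without loss of order, yielding $\|\lambda-\mu^{kq}\|_{L^2(\Gamma_C)}\lesssim (k/q)^{\alpha}\|u\|_{H^{1+\alpha}}$ and $\|\lambda-\mu^{kq}\|_{\tilde H^{-1/2}(\Gamma_C)}\lesssim (k/q)^{\alpha+1/2}\|u\|_{H^{1+\alpha}}$ via the data-to-solution bound in the hypothesis.

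The residual terms in Theorem~\ref{thm:Lame:apriori1} are then controlled by Cauchy--Schwarz and Young with $\epsilon$ small enough to absorb the squared primal, dual and multiplier errors into $\alpha_1,\alpha_2,\alpha_3$ on the left. The stability estimate of Lemma~\ref{lem:stability} bounds $\|u^{hp}\|_{\tilde H^{1/2}(\Gamma_\Sigma)}$ and $\|\gamma^{1/2}\lambda^{kq}\|_{L^2(\Gamma_C)}$; Lemma~\ref{lem:SteklovApprox} bounds $E_{hp}$ by the best approximation error of $\psi$; and Theorem~\ref{thm:inverseEstimate} dominates $\|\gamma^{1/2}S_{hp}(\cdot)\|_{L^2(\Gamma_C)}$ by $\|\cdot\|_{\tilde H^{1/2}(\Gamma_\Sigma)}$ on discrete arguments. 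The stabilization-weighted terms contribute an extra factor $\|\gamma\|_{L^\infty}^{1/2}\sim h^{1/2}/p$, producing the second rate $h^{1/2}k^{\alpha}/(pq^{\alpha})$ of the assertion, while the $\tilde H^{-1/2}(\Gamma_C)$-approximation of $\lambda$ supplies the dominant rate $(k/q)^{\alpha+1/2}$. After a square-root step the nonconformity remainder survives linearly, giving the infimum on the right-hand side.

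I expect the main obstacle to be twofold. First, constructing $\mu^{kq}\in\tilde M^+_{kq}(\mathcal{F})$ of optimal order that respects the pointwise constraints at the Gauss--Lobatto nodes: in the tangential component the truncation $|\mu^{kq}_t|\leq\mathcal{F}$ interacts with $\mathcal{F}\in L^2$ only, and the hypothesis $\|\mathcal{F}\|_{L^2(\Gamma_C)}\lesssim\|u\|_{H^{1+\alpha}(\Gamma_\Sigma)}$ is what absorbs the loss from the cutoff. Second, handling $\|\gamma^{1/2}S(u-v^{hp})\|_{L^2(\Gamma_C)}$ for the continuous operator $S$ acting on the non-polynomial remainder; this requires splitting $S(u-v^{hp})=S_{hp}(u-v^{hp})+E_{hp}(u-v^{hp})$, applying Theorem~\ref{thm:inverseEstimate} to the discrete part and Lemma~\ref{lem:SteklovApprox} to the operator-error part, in order to recover the weighted rate claimed.
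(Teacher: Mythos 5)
Your overall strategy is the right one and matches the paper: start from Theorem~\ref{thm:Lame:apriori1}, take $v=u^{hp}$ and use $g\equiv 0$, insert an $hp$-interpolant for $u$, control the stabilization and consistency terms, absorb the squared error norms into the left-hand side, and let the nonconformity term survive as $\inf_{\mu\in M^+(\mathcal{F})}\int_{\Gamma_C}(\lambda^{kq}-\mu)u\,ds$. Two technical ingredients differ, and one of them is a genuine gap.

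The less serious difference is your choice of the discrete multiplier. The paper takes $\mu^{kq}=\pi_{M_{kq}}\lambda$, the $L^2$-projection onto $M^+_{kq}(\mathcal{F})$, whereas you propose Gauss--Lobatto nodal interpolation of $\lambda$ followed by truncation. But under the theorem's hypotheses $\lambda\in H^\alpha(\Gamma_C)$ with $\alpha\in[0,\tfrac12)$, and such functions need not be continuous (at $\alpha=0$ merely $L^2$), so classical nodal interpolation is not well-defined. You would need either a quasi-interpolant that accepts $L^2$ input, or the $L^2$-projection as in the paper. The claim that ``$\alpha<1/2$ permits a nodewise truncation without loss of order'' is misplaced: that range of $\alpha$ is precisely what endangers the pointwise construction in the first place.

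The real gap is in the treatment of $\lVert\gamma^{1/2}S(u-v^{hp})\rVert_{L^2(\Gamma_C)}$. You propose to split $S(u-v^{hp})=S_{hp}(u-v^{hp})+E_{hp}(u-v^{hp})$ and apply Theorem~\ref{thm:inverseEstimate} ``to the discrete part.'' But Theorem~\ref{thm:inverseEstimate} is an inverse estimate whose hypothesis is that the \emph{argument} lies in $\mathcal{V}_{hp}$; it says nothing for $S_{hp}$ applied to the non-polynomial remainder $u-v^{hp}\notin\mathcal{V}_{hp}$. Also, Lemma~\ref{lem:SteklovApprox} only controls $E_{hp}$ in $H^{-1/2}$, not in the weighted $L^2$ norm you need here. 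The paper avoids all of this by using the $H^1\to L^2$ continuity of $S$ directly: since $\gamma|_E\sim\gamma_0 h_E p_E^{-2}$, one has $\lVert\gamma^{1/2}S(u-v^{hp})\rVert^2_{L^2(\Gamma_C)}\lesssim \gamma_0\frac{h}{p^2}\lVert u-v^{hp}\rVert^2_{H^1(\Gamma_\Sigma)}$, and then the $H^1$-interpolation estimate $\lVert u-v^{hp}\rVert_{H^1}\lesssim (h/p)^{\alpha}\lVert u\rVert_{H^{1+\alpha}}$ (cf.\ \eqref{eq:Lame:estimateA3}, \eqref{eq:Lame:estimateA33}) gives the rate $h^{1+2\alpha}/p^{2+2\alpha}$. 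The inverse estimate is reserved for the genuinely discrete difference $u^{hp}-v^{hp}\in\mathcal{V}_{hp}$, as in \eqref{eq:Lame:estimateA41}. Replacing your proposed splitting argument by this direct $H^1\to L^2$ bound closes the gap.
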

\begin{proof}
We apply Theorem \ref{thm:Lame:apriori1} with $v = u^{hp}$. Employing Cauchy Schwarz and  Young's inequality with $\epsilon>0$, we note that
\begin{align*}
& \langle \lambda^{kq},u-v^{hp}\rangle_{\Gamma_C}=\langle \lambda^{kq}-\lambda,u-v^{hp}\rangle_{\Gamma_C}+\langle \lambda,u-v^{hp}\rangle_{\Gamma_C} \\
& \leq \lVert \lambda^{kq}-\lambda\lVert_{L_{2}(\Gamma_{C})}\lVert u-v^{hp}\lVert_{L_{2}(\Gamma_{C})}+
 \lVert \lambda\lVert_{L^{2}(\Gamma_{C})}\lVert u-v^{hp}\lVert_{L^{2}(\Gamma_{C})}\nonumber\\
  &\leq\frac{\epsilon}{2} \lVert\gamma^{\frac{1}{2}}(\lambda^{kq}-\lambda)\lVert^{2}_{L^{2}(\Gamma_{C})}+
  \frac{1}{2\epsilon\gamma_{0}}\frac{p^{2}}{h}\lVert u-v^{hp}\lVert^{2}_{L^{2}(\Gamma_{C})}+
  \lVert \lambda\lVert_{L^{2}(\Gamma_{C})}\lVert u-v^{hp}\lVert_{L^{2}(\Gamma_{C})}
\end{align*}
Setting $v^{hp}=\mathcal{I}_{hp} u$, where $\mathcal{I}_{hp}$ is the Lagrange interpolation operator, we have
\begin{align} \label{eq:Lame:estimateA3}
 \lVert u-v^{hp}\lVert^{2}_{\tilde{ H}^{\frac{1}{2}}(\Gamma_\Sigma)}
\leq C \frac{h^{1+2\alpha}}{p^{1+2\alpha}} \lVert u\lVert^{2}_{{ H}^{1+\alpha}(\Gamma_\Sigma)}, \qquad
 \lVert u-v^{hp}\lVert^{2}_{ H^{1}(\Gamma_\Sigma)}
\leq C \left(\frac{h}{p}\right)^{2\alpha} \lVert u\lVert^{2}_{{ H}^{1+\alpha}(\Gamma_\Sigma)}.
\end{align}
Thus
\begin{align}\label{eq:Lame:estimateA33}
 \lVert\gamma^{\frac{1}{2}}S( u-v^{hp})\lVert^{2}_{L^{2}(\Gamma_{C})}&\leq \alpha\lVert\gamma^{\frac{1}{2}}( u-v)\lVert^{2}_{ {H}^{1}(\Gamma_\Sigma)} \leq C\alpha\gamma_{0}\frac{h^{1+2\alpha}}{p^{2+2\alpha}} \lVert u\lVert^{2}_{{ H}^{1+\alpha}(\Gamma_\Sigma)}.
\end{align}
For $ u^{hp}-v^{hp}\in\mathcal{V}_{hp}$ we have
\begin{align}\label{eq:Lame:estimateA41}
 \frac{h}{p^{2}}\lVert S( u^{hp}-v^{hp})\lVert^{2}_{L^{2}(\Gamma_{C})}\leq
\alpha\frac{h}{p^{2}}\lVert u^{hp}-v^{hp}\lVert^{2}_{ \tilde{H}^{1}(\Gamma_\Sigma)}
\leq\alpha\lVert u^{hp}-v^{hp}\lVert^{2}_{\tilde{ H}^{\frac{1}{2}}(\Gamma_\Sigma)}.
\end{align}
Again using Cauchy Schwarz and Young's inequality yields with $\lambda=-Su$
\begin{align}\label{eq:Lame:estimateA42}
\langle\gamma Su^{hp}+\lambda,S(u^{hp}-v^{hp})\rangle_{\Gamma_C}&\leq\frac{1}{2}\gamma_{0}\frac{h}{p^{2}}\lVert S(u^{hp}-v^{hp})\lVert^{2}_{L^{2}(\Gamma_{C})}+
\frac{1}{2}\gamma_{0}\frac{h}{p^{2}}\lVert S( u-v^{hp}+v^{hp}- u^{hp})\lVert^{2}_{L^{2}(\Gamma_{C})}\nonumber\\
&\leq\frac{3}{2}\gamma_{0}\frac{h}{p^{2}}\lVert S( u^{hp}-v^{hp})\lVert^{2}_{L^{2}(\Gamma_{C})}+\gamma_{0}\frac{h}{p^{2}}\lVert S( u-
v^{hp})\lVert^{2}_{L^{2}(\Gamma_{C})}\nonumber\\
&\leq C\left(\frac{h^{1+2\alpha}}{p^{2+2\alpha}}\lVert u\lVert^{2}_{{ H}^{1+\alpha}(\Gamma_\Sigma)}+
\frac{h^{1+2\alpha} }{p^{1+2\alpha} } \lVert u\lVert^{2}_{{ H}^{1+\alpha}(\Gamma_\Sigma)}
+\gamma_{0}\lVert u- u^{hp}\lVert^{2}_{\tilde{ H}^{\frac{1}{2}}(\Gamma_\Sigma)}\right).
\end{align}
Choosing $\mu^{kq}=\pi_{M_{kq}}\lambda$ with the  $L^2$-projection $\pi_{M_{kq}}$ onto $M_{kq}^+(\mathcal{F})$ we have
\begin{align}\label{eq:Lame:estimateA5nt}
 A:=\inf_{\mu^{kq} \in M_{kq}^+(\mathcal{F})}\langle \mu^{kq}-\lambda, u^{hp}+\gamma(-\lambda^{kq}- Su^{hp})\rangle_{\Gamma_{C}} \leq \langle (\pi_{M_{kq}}\lambda-\lambda), u^{hp}-\gamma (
 \lambda^{kq}+Su^{hp}) \rangle_{\Gamma_C}.
\end{align}
Using the $L^2$-orthogonality of the projection and standard approximation properties gives
\begin{align*}
 \langle (\pi_{M_{kq}}\lambda-\lambda),u^{hp} \rangle_{\Gamma_C} &= \langle (\pi_{M_{kq}}\lambda-\lambda),(u^{hp}-u) \rangle_{\Gamma_C}
 +\langle (\pi_{M_{kq}}\lambda-\lambda),(u-\pi_{M_{kq}}u)\rangle_{\Gamma_C} \\
&\leq C \left(\frac{k^{\frac{1}{2}+\alpha}}{q^{\frac{1}{2}+\alpha}}\lVert \lambda\lVert_{H^{\alpha}(\Gamma_{C})}\lVert  u- u^{hp}\lVert_{\tilde{ H}^{\frac{1}{2}}(\Gamma_\Sigma)}+
 \frac{k^{1+2\alpha}}{q^{1+2\alpha}}\lVert \lambda\lVert_{H^{\alpha}(\Gamma_{C})}\lVert  u\lVert_{{ H}^{1+\alpha}(\Gamma_\Sigma)}\right)
\end{align*}
Hence, employing Young's inequality we obtain
\begin{align*}
&\langle (\pi_{M_{kq}}\lambda-\lambda), -\gamma (  \lambda^{kq}+ Su^{hp}) \rangle_{\Gamma_C} \\
& = \langle \gamma(\pi_{M_{kq}}\lambda-\lambda),(-\lambda^{kq}+\lambda)\rangle_{\Gamma_C} +\langle\gamma(\pi_{M_{kq}}\lambda-\lambda), S( u-\mathcal{I}_{hp} u)\rangle_{\Gamma_C}+\langle\gamma(\pi_{M_{kq}}\lambda-\lambda),S(\mathcal{I}_{hp} u- u^{hp})\rangle_{\Gamma_C} \\
&\leq C \gamma_{0}^{\frac{1}{2}}\left(\frac{h}{p^{2}}\right)^{\frac{1}{2}}\frac{k^{\alpha}}{q^{\alpha}}\lVert \lambda\lVert_{H^{\alpha}(\Gamma_{C})} ( \lVert \gamma^{\frac{1}{2}}(\lambda-\lambda^{kq})\lVert_{L^{2}(\Gamma_{C})} + \lVert \gamma^{\frac{1}{2}}S( u-\mathcal{I}_{hp} u)\lVert_{L^{2}(\Gamma_{C})} \lVert \gamma^{\frac{1}{2}}S(\mathcal{I}_{hp} u- u^{hp})\lVert_{L^{2}(\Gamma_{C})}).
\end{align*}
Using Young's inequality, \eqref{eq:Lame:estimateA33}, \eqref{eq:Lame:estimateA3} and \eqref{eq:Lame:estimateA42}, we finally obtain
%
\begin{align}\label{eq:Lame:estimateA5nfinal}
 A&\leq C\Big(\epsilon\lVert \gamma^{\frac{1}{2}}(\lambda-\lambda^{kq})\lVert^{2}_{L_{2}(\Gamma_{C})}
+\gamma_{0}\lVert  u- u^{hp}\lVert^{2}_{\tilde{ H}^{\frac{1}{2}}(\Gamma_\Sigma)}\\&\qquad +\frac{k^{2\alpha+1}}{q^{2\alpha+1}}\lVert  u\lVert^{2}_{{ H}^{1+\alpha}(\Gamma_\Sigma)}+\frac{h k^{2\alpha}}{p^{2}q^{2\alpha}}\lVert  u\lVert^{2}_{{ H}^{1+\alpha}(\Gamma_\Sigma)}+
\frac{h^{2\alpha+1}}{p^{2\alpha+1}}\lVert u\lVert^{2}_{{ H}^{1+\alpha}(\Gamma_\Sigma)}\Big).
\end{align}
We now estimate the term
\begin{align}\label{eq:Lame:estimateA7n}
 B:= \inf_{v^{hp}\in \mathcal{V}_{hp}} \langle \gamma(-\lambda^{kq} + \lambda + Su-Su^{hp}),E_{hp}( u^{hp}-v^{hp}) \rangle_{\Gamma_C}\ .
\end{align}
As above we have 
\begin{align*}
\gamma_{0}\frac{h}{p^{2}}\lVert S( u- u^{hp})\lVert^{2}_{L^{2}(\Gamma_{C})}
\leq C\left(\frac{h^{1+2\alpha}}{p^{2+2\alpha}}\lVert u\lVert^{2}_{{ H}^{1+\alpha}(\Gamma_\Sigma)}+\frac{h^{1+2\alpha}}{p^{1+2\alpha}} \lVert u\lVert^{2}_{{ H}^{1+\alpha}(\Gamma_\Sigma)}
+\gamma_{0}\lVert u- u^{hp}\lVert^{2}_{\tilde{ H}^{\frac{1}{2}}(\Gamma_\Sigma)}\right)\ ,
\end{align*}
and with continuity of $E_{hp}=S-S_{hp}$
\begin{align*}
\gamma_{0}\frac{h}{p^{2}}\lVert E_{hp}( u^{hp}-v^{hp})\lVert^{2}_{L^{2}(\Gamma_{C})}
\leq C\left(\frac{h^{1+2\alpha}}{p^{1+2\alpha}} \lVert u\lVert^{2}_{{ H}^{1+\alpha}(\Gamma_\Sigma)}
+\gamma_{0}\lVert u- u^{hp}\lVert^{2}_{\tilde{ H}^{\frac{1}{2}}(\Gamma_\Sigma)}\right),
\end{align*}
yielding altogether
\begin{align}\label{eq:Lame:estimateA7n2}
 B\leq C\left(\frac{h^{1+2\alpha}}{p^{1+2\alpha}} \lVert u\lVert^{2}_{{ H}^{1+\alpha}(\Gamma_\Sigma)}
+\gamma_{0}\lVert u- u^{hp}\lVert^{2}_{\tilde{ H}^{\frac{1}{2}}(\Gamma_\Sigma)}+\epsilon\lVert \gamma^{\frac{1}{2}}(\lambda-\lambda^{kq})\lVert^{2}_{L_{2}(\Gamma_{C})}\right).
\end{align}
Similar arguments yield (see \cite{IssaouiDiss})
\begin{align}\label{eq:Lame:estimateA8n}
  \inf_{v^{hp}\in \mathcal{V}_{hp}} \langle \gamma E_{hp}( u^{hp}),S( u^{hp}-v^{hp}) \rangle_{\Gamma_C} &\leq C' \frac{h^{1+2\alpha} }{p^{1+2\alpha} } \lVert u\lVert^{2}_{{ H}^{1+\alpha}(\Gamma_\Sigma)}
 +\left[c \gamma_{0}^{2} (\frac{1}{2\epsilon}+\frac{1}{2})+\frac{\epsilon}{2} C_{E_{h}}\right]\lVert  u^{hp}- u\lVert^{2}_{\tilde{H}^{\frac{1}{2}}(\Gamma_\Sigma)} \\
  \inf_{v^{hp}\in \mathcal{V}_{hp}} \langle \gamma E_{hp}( u^{hp}),E_{hp}( u^{hp}-v^{hp})\rangle_{\Gamma_C}  &\leq C' \frac{h^{1+2\alpha} }{p^{1+2\alpha} } \lVert u\lVert^{2}_{{ H}^{1+\alpha}(\Gamma_\Sigma)}  +\left[c \gamma_{0}^{2} (\frac{1}{2\epsilon}+\frac{1}{2})+\frac{\epsilon}{2} C_{E_{hp}}\right]\lVert  u^{hp}- u\lVert^{2}_{\tilde{H}^{\frac{1}{2}}(\Gamma_\Sigma)}
\end{align}
In order to estimate the term
\begin{align}\label{eq:Lame:estimateAn9}
 C:=\inf_{\mu^{kq}\in M_{kq}^+(\mathcal{F})} \langle \gamma(\lambda^{kq}-\mu^{kq}), E_{hp}( u^{hp}) \rangle_{\Gamma_C}
\end{align}
we write
\begin{align}\label{eq:Lame:estimateAn91}
 \langle \gamma(\lambda^{kq}-\mu^{kq}), E_{hp} u^{hp} \rangle_{\Gamma_C}= \langle \gamma(\lambda^{kq}-\lambda), E_{hp} u^{hp} \rangle_{\Gamma_C}+ \langle \gamma(\lambda-\mu^{kq}) ,E_{hp}u^{hp} \rangle_{\Gamma_C}
\end{align}
and estimate the two terms separately. Inserting $u^{hp}=u^{hp}-v^{hp}+v^{hp}-u+u$, we have
\begin{align*}
\langle \gamma(\lambda^{kq}-\lambda), E_{hp} u^{hp} \rangle_{\Gamma_C} &
\leq \frac{3\epsilon}{2}\lVert \gamma^{\frac{1}{2}}(\lambda-\lambda^{kq})\lVert^{2}_{L_{2}(\Gamma_{C})}\\ &\qquad +\frac{\gamma_{0} h}{\epsilon p^{2}}
 \lVert  E_{hp}( u^{hp}-v^{hp})\lVert^{2}_{L^{2}(\Gamma_{C})}+\frac{\gamma_{0} h}{\epsilon p^{2}}\lVert  E_{hp}( u-v^{hp})\lVert^{2}_{L^{2}(\Gamma_{C})}+\frac{\gamma_{0} h}{\epsilon p^{2}}\lVert  E_{hp} u\lVert^{2}_{L^{2}(\Gamma_{C})} \\
& \leq \frac{3\epsilon}{2}\lVert \gamma^{\frac{1}{2}}(\lambda-\lambda^{kq})\lVert^{2}_{L^{2}(\Gamma_{C})}+
\frac{\gamma_{0}}{\epsilon}  \frac{h^{1+2\alpha}}{p^{2+2\alpha}}  \lVert u\lVert^{2}_{{ H}^{1+\alpha}(\Gamma_\Sigma)}\\ & \qquad+
\frac{\gamma_{0}}{\epsilon}  \frac{h^{1+2\alpha}}{p^{1+2\alpha}}  \lVert u\lVert^{2}_{{ H}^{1+\alpha}(\Gamma_\Sigma)}+\frac{\gamma_{0}}{\epsilon}\lVert  u^{hp}- u\lVert^{2}_{H^{\frac{1}{2}}(\Gamma_\Sigma)}+ \alpha\frac{\gamma_{0}}{\epsilon}  \frac{h}{p^{2}}  \lVert u\lVert^{2}_{{ H}^{1+\alpha}(\Gamma_\Sigma)}
\end{align*}
The second term in \eqref{eq:Lame:estimateAn91} is estimated by
\begin{align}\label{eq:Lame:estimateAn9122}
 \left\langle \gamma(\lambda-\mu^{kq}), E_{hp} u^{hp}\right\rangle_{\Gamma_C} &\leq \gamma_{0} \frac{h^{\frac{1}{2}}}{p} \lVert \lambda-\mu^{kq}\lVert_{L_{2}(\Gamma_{C})}\frac{h^{\frac{1}{2}}}{p}\lVert E_{hp} u^{hp}\lVert^{2}_{L_{2}(\Gamma_{C})} \leq \gamma_{0}\frac{h^{\frac{1}{2}}}{p}\frac{k^{\alpha}}{q^{\alpha}}\lVert \lambda\lVert_{H^{\alpha}(\Gamma_{C})}\lVert u\lVert_{{ H}^{1+\alpha}(\Gamma_\Sigma)}.
\end{align}
Hence, the term C in \eqref{eq:Lame:estimateAn9} is bounded by the sum of the previous two right hand sides.
Note that $\gamma_{0}$ is sufficiently small and, hence, moving the terms
$\gamma_{0}\lVert  u- u^{h}\lVert^{2}_{\tilde{ H}^{\frac{1}{2}}(\Gamma_\Sigma)}$ and $\epsilon\lVert \gamma^{\frac{1}{2}}(\lambda -\lambda ^{hp})\lVert^{2}_{L_{2}(\Gamma_{C})}$ to the left hand side, we obtain the a priori error estimate of the theorem.
\end{proof}

For the conforming approximation of \eqref{eq:DiscreteMixedProblem} by Bernstein polynomials $\lambda^{kq}\in M_{kq}^+(\mathcal{F})\subset M^+(\mathcal{F})$, the term
\begin{align}
  \inf_{\mu\in M^+(\mathcal{F})}\int_{\Gamma_{C}} (\lambda^{kq}-\mu)u\:ds=0
\end{align}
vanishes. {However, the properties of a corresponding quasi--interpolation operator to replace $\pi_{M_{kq}}$ do not seem to be available in the literature. Assuming that one can define an $H^k$-stable quasi-interpolation operator $\tilde{\pi}_{M_{kq}}: L^2(\Gamma_C)\cap M^+(\mathcal{F}) \rightarrow M^+_{kq}(\mathcal{F})$, such that $\tilde{\pi}_{M_{kq}}$ satisfies the approximation property
\begin{align}
\left\| \eta - \tilde{\pi}_{M_{kq}} \eta \right\|_{H^{k}(\Gamma_C)} \leq C \left(\frac{h}{p}\right)^{l+1-k} \left| \eta \right|_{H^{l+1}(\Gamma_C)}\ ,
\end{align}
the proof of Theorem~\ref{thm:Lame:apriori2} yields: }
{
\begin{remark}\label{thm:Lame:apriori22}
Let $( u, \lambda)\in \tilde{H}^{1/2}(\Gamma_\Sigma)\times M^+(\mathcal{F})$ be the solution of the problem \eqref{eq:MixedProblem}  and $(u^{hp}, \lambda^{kq})$ the solution of the discrete problem  \eqref{eq:DiscreteMixedProblem} with Bernstein polynomials, i.e.~$\lambda^{kq}\in M_{kq}^+(\mathcal{F})$. Under the same assumptions as in Theorem~\ref{thm:Lame:apriori2} there holds with a constant $C>0$ independent of $h$, $p$, $k$ and $q$
\begin{align}\label{eq:Lame:aprioriBernstein}
  \lVert u-u^{hp}\lVert_{\tilde{ H}^{\frac{1}{2}}(\Gamma_\Sigma)}+
\lVert\gamma^{\frac{1}{2}}(\lambda-\lambda^{kq}) \lVert_{L_{2}(\Gamma_{C})}
 \leq C\left(\frac{k^{\alpha+\frac{1}{2}}}{q^{\alpha+\frac{1}{2}}}+\frac{h^{\frac{1}{2}} k^{\alpha}}{pq^{\alpha}}\right)\lVert  u\lVert_{{ H}^{1+\alpha}(\Gamma_\Sigma)}\ .
\end{align}
\end{remark}
}

\begin{remark}
 Our convergence analysis (Theorem~\ref{thm:Lame:apriori2}) for the $h$-version with $p=q=1$ covers the result of Hild and Renard \cite{hildrenard2010} for the FEM.
\end{remark}

\section{A posteriori error estimates}
\label{sec:Aposteriorierrorestimations}
In this section we present an a posteriori error estimate of residual type for the mixed hp-BEM scheme.

\begin{lemma} \label{lem:aposterioriContactPart}
Let $(u,\lambda)$, $(u^{hp},\lambda^{kq})$ be the solution of \eqref{eq:MixedProblem}, \eqref{eq:DiscreteMixedProblem} respectively. Then there holds
\begin{align*}
&\left\langle \lambda-\lambda^{kq}, u^{hp}-u\right\rangle_{\Gamma_C} \\& \leq
\left\langle \left(\lambda^{kq}_n\right)^+,\left(g-u^{hp}_n\right)^+\right\rangle_{\Gamma_C}+ \left\|\lambda^{kq}_n -\lambda_n \right\|_{-\frac{1}{2},\Gamma_C} \left\| \left(g-u^{hp}_n\right)^-\right\|_{\frac{1}{2},\Gamma_C} \\ & \qquad+ \left\|\left(\lambda^{kq}_n\right)^-\right\|_{-\frac{1}{2},\Gamma_C} \left\|u^{hp}_n-u_n \right\|_{\frac{1}{2},\Gamma_C} 
+ \left\| \left(\left\|\lambda^{kq}_t\right\|_2-\mathcal{F} \right)^+ \right\|_{-\frac{1}{2},\Gamma_C} \left\| \:\left\|u_t-u^{hp}_t\right\|_2 \right\|_{\frac{1}{2},\Gamma_C}\\ &\qquad  -\left\langle \left(\left\|\lambda^{kq}_t\right\|_2-\mathcal{F} \right)^-, \left\|u^{hp}_t\right\|_2 \right\rangle_{\Gamma_C} 
-\left\langle  \lambda^{kq}_t,u^{hp}_t \right\rangle_{\Gamma_C}
 +\left\langle \left\|\lambda^{kq}_t\right\|_2 , \left\|u^{hp}_t\right\|_2 \right\rangle_{\Gamma_C}
\end{align*}
where $v^+=\max\left\{0,v\right\}$ and $v^-=\min\left\{0,v\right\}$, i.e.~$v=v^+ +v^-$.
\begin{proof}
Utilizing that $\left\langle \lambda_n,u_n-g\right\rangle_ {\Gamma_C}=0$ by \eqref{eq:ContContactConstraints}, $ u_n-g \leq 0$ almost everywhere in $\Gamma_C$ and $ \left(\lambda^{kq}_n\right)^+ \in L^2(\Gamma_C)$ where $v^+=\max\left\{0,v\right\}$ and $v^-=\min\left\{0,v\right\}$, i.e.~$v=v^+ +v^-$, there holds
\begin{align*}
  \left\langle \lambda_n-\lambda^{kq}_n,u^{hp}_n-u_n\right\rangle_{\Gamma_C} &= \left\langle \lambda_n-\left(\lambda^{kq}_n\right)^+,u^{hp}_n-g\right\rangle_{\Gamma_C} + \left\langle \lambda_n,g-u_n\right\rangle_{\Gamma_C} \\&\qquad -\left\langle \left(\lambda^{kq}_n\right)^+,g-u_n\right\rangle_{\Gamma_C} - \left\langle \left(\lambda^{kq}_n\right)^-,u^{hp}_n-u_n\right\rangle_{\Gamma_C}\\
	&\leq \left\langle \lambda_n-\left(\lambda^{kq}_n\right)^+,u^{hp}_n-g\right\rangle_{\Gamma_C}  - \left\langle \left(\lambda^{kq}_n\right)^-,u^{hp}_n-u_n\right\rangle_{\Gamma_C}
\end{align*}
and with $\lambda \in M^+(\mathcal{F})$
\begin{align*}
\left\langle \lambda_n-\left(\lambda^{kq}_n\right)^+,u^{hp}_n-g\right\rangle_{\Gamma_C} &=
\left\langle \left(\lambda^{kq}_n\right)^+,g-u^{hp}_n\right\rangle_{\Gamma_C} + \left\langle -\lambda_n,\left(g-u^{hp}_n\right)^+ +  \left(g-u^{hp}_n\right)^- \right\rangle_{\Gamma_C} \\
&\leq \left\langle \left(\lambda^{kq}_n\right)^+,g-u^{hp}_n\right\rangle_{\Gamma_C} + \left\langle \lambda^{kq}_n -\lambda_n - \left(\lambda^{kq}_n\right)^+ -\left(\lambda^{kq}_n\right)^-,  \left(g-u^{hp}_n\right)^- \right\rangle_{\Gamma_C} \\
&= \left\langle \left(\lambda^{kq}_n\right)^+,\left(g-u^{hp}_n\right)^+\right\rangle_{\Gamma_C} + \left\langle \lambda^{kq}_n -\lambda_n ,  \left(g-u^{hp}_n\right)^- \right\rangle_{\Gamma_C}\\ & \qquad - \left\langle \left(\lambda^{kq}_n\right)^-,  \left(g-u^{hp}_n\right)^- \right\rangle_{\Gamma_C}
\\
&\leq \left\langle \left(\lambda^{kq}_n\right)^+,\left(g-u^{hp}_n\right)^+\right\rangle_{\Gamma_C} + \left\langle \lambda^{kq}_n -\lambda_n ,  \left(g-u^{hp}_n\right)^- \right\rangle_{\Gamma_C} .
\end{align*}
Application of Cauchy-Schwarz inequality yields
\begin{align*}
\left\langle \lambda_n-\lambda^{kq}_n,u^{hp}_n-u_n\right\rangle_{\Gamma_C} &\leq  \left\langle \left(\lambda^{kq}_n\right)^+,\left(g-u^{hp}_n\right)^+\right\rangle_{\Gamma_C}\\ &\qquad + \left\|\lambda^{kq}_n -\lambda_n \right\|_{-\frac{1}{2},\Gamma_C} \left\| \left(g-u^{hp}_n\right)^-\right\|_{\frac{1}{2},\Gamma_C} + \left\|\left(\lambda^{kq}_n\right)^-\right\|_{-\frac{1}{2},\Gamma_C} \left\|u^{hp}_n-u_n \right\|_{\frac{1}{2},\Gamma_C}.
\end{align*}
For the tangential component there holds by exploiting $ \left\langle \lambda_t,u_t\right\rangle_{\Gamma_C} = \left\langle \mathcal{F}, \left\|u_t\right\|_2\right\rangle_{\Gamma_C} $, $\left\langle \lambda_t, u^{hp}_t\right\rangle_{\Gamma_C} \leq \left\langle \mathcal{F}, \left\|u^{hp}_t\right\|_2 \right\rangle_{\Gamma_C}$, \mbox{$v=v^+ + v^-$} and triangle inequality that
\begin{align*}
\left\langle \lambda_t-\lambda^{kq}_t\right.&\left.,u^{hp}_t-u_t\right\rangle_{\Gamma_C} \leq  \left\langle -\mathcal{F} ,\left\|u_t\right\|_2 \right\rangle_{\Gamma_C} + \left\langle \lambda^{kq}_t , u_t \right\rangle_{\Gamma_C} +\left\langle \mathcal{F} , \left\|u^{hp}_t\right\|_2 \right\rangle_{\Gamma_C}-\left\langle  \lambda^{kq}_t,u^{hp}_t \right\rangle_{\Gamma_C} \\
 \leq & \left\langle \left(\left\|\lambda^{kq}_t\right\|_2-\mathcal{F} \right)^+,\left\|u_t\right\|_2 \right\rangle_{\Gamma_C}  +\left\langle \mathcal{F} , \left\|u^{hp}_t\right\|_2 \right\rangle_{\Gamma_C}-\left\langle  \lambda^{kq}_t,u^{hp}_t \right\rangle_{\Gamma_C} \\
 \leq & \left\langle \left(\left\|\lambda^{kq}_t\right\|_2-\mathcal{F} \right)^+,\left\|u_t-u^{hp}_t\right\|_2 \right\rangle_{\Gamma_C}  +\left\langle \left(\left\|\lambda^{kq}_t\right\|_2-\mathcal{F} \right)^+ + \mathcal{F} , \left\|u^{hp}_t\right\|_2 \right\rangle_{\Gamma_C}-\left\langle  \lambda^{kq}_t,u^{hp}_t \right\rangle_{\Gamma_C}  \\
=&  \left\langle \left(\left\|\lambda^{kq}_t\right\|_2-\mathcal{F} \right)^+,\left\|u_t-u^{hp}_t\right\|_2 \right\rangle_{\Gamma_C} \\& \qquad -\left\langle \left(\left\|\lambda^{kq}_t\right\|_2-\mathcal{F} \right)^-, \left\|u^{hp}_t\right\|_2 \right\rangle_{\Gamma_C} -\left\langle  \lambda^{kq}_t,u^{hp}_t \right\rangle_{\Gamma_C}
 +\left\langle \left\|\lambda^{kq}_t\right\|_2 , \left\|u^{hp}_t\right\|_2 \right\rangle_{\Gamma_C} \\
 \leq & \left\| \left(\left\|\lambda^{kq}_t\right\|_2-\mathcal{F} \right)^+ \right\|_{-\frac{1}{2},\Gamma_C} \left\| \:\left\|u_t-u^{hp}_t\right\|_2 \right\|_{\frac{1}{2},\Gamma_C} \\& \qquad -\left\langle \left(\left\|\lambda^{kq}_t\right\|_2-\mathcal{F} \right)^-, \left\|u^{hp}_t\right\|_2 \right\rangle_{\Gamma_C}
-\left\langle  \lambda^{kq}_t,u^{hp}_t \right\rangle_{\Gamma_C}
 +\left\langle \left\|\lambda^{kq}_t\right\|_2 , \left\|u^{hp}_t\right\|_2 \right\rangle_{\Gamma_C}
\end{align*}

\end{proof}
\end{lemma}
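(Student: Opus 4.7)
The plan is to decompose the pairing $\langle \lambda-\lambda^{kq}, u^{hp}-u\rangle_{\Gamma_C}$ into its normal and tangential components and treat each separately, exploiting the KKT-type complementarity conditions of the continuous problem together with the splits $v = v^+ + v^-$ of the discrete residuals.

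For the normal component, I would split $\lambda^{kq}_n = (\lambda^{kq}_n)^+ + (\lambda^{kq}_n)^-$ and insert the gap function by writing $u^{hp}_n - u_n = (u^{hp}_n - g) + (g - u_n)$, using the complementarity relation $\langle \lambda_n, g - u_n\rangle_{\Gamma_C} = 0$ from \eqref{eq:ContContactConstraints} and the sign condition $(\lambda^{kq}_n)^+ \geq 0$, $g - u_n \geq 0$ to drop the nonnegative term $\langle (\lambda^{kq}_n)^+, g - u_n\rangle_{\Gamma_C}$. This reduces the normal contribution to $\langle \lambda_n - (\lambda^{kq}_n)^+, u^{hp}_n - g\rangle_{\Gamma_C} - \langle (\lambda^{kq}_n)^-, u^{hp}_n - u_n\rangle_{\Gamma_C}$. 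I would then split $g - u^{hp}_n = (g - u^{hp}_n)^+ + (g - u^{hp}_n)^-$: on the positive part, the sign of $\lambda_n \geq 0$ lets me drop a term and keep only $\langle (\lambda^{kq}_n)^+, (g - u^{hp}_n)^+\rangle_{\Gamma_C}$; on the negative part, rewriting $\lambda_n - (\lambda^{kq}_n)^+ = \lambda_n - \lambda^{kq}_n + (\lambda^{kq}_n)^-$ and using signs once more to discard a nonnegative contribution leaves $\langle \lambda^{kq}_n - \lambda_n, (g - u^{hp}_n)^-\rangle_{\Gamma_C}$, which together with the remainder is finally bounded via the $H^{-1/2}/H^{1/2}$ duality on $\Gamma_C$.

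For the tangential component, the friction law \eqref{eq:prog:FrictionLaw} of the continuous problem gives $\langle \lambda_t, u_t\rangle_{\Gamma_C} = \langle \mathcal{F}, \|u_t\|_2\rangle_{\Gamma_C}$ and the polar estimate $\langle \lambda_t, u^{hp}_t\rangle_{\Gamma_C} \leq \langle \mathcal{F}, \|u^{hp}_t\|_2\rangle_{\Gamma_C}$. Expanding $\langle \lambda_t - \lambda^{kq}_t, u^{hp}_t - u_t\rangle_{\Gamma_C}$ with these two relations and the pointwise Cauchy--Schwarz bound $\langle \lambda^{kq}_t, u_t\rangle_{\Gamma_C} \leq \langle \|\lambda^{kq}_t\|_2, \|u_t\|_2\rangle_{\Gamma_C}$ replaces the continuous quantities by $\mathcal{F}$ and $\|\lambda^{kq}_t\|_2$. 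Splitting $\|\lambda^{kq}_t\|_2 - \mathcal{F} = (\|\lambda^{kq}_t\|_2 - \mathcal{F})^+ + (\|\lambda^{kq}_t\|_2 - \mathcal{F})^-$ and applying the triangle inequality $\|u_t\|_2 \leq \|u_t - u^{hp}_t\|_2 + \|u^{hp}_t\|_2$ to the positive part produces the residual term that will be estimated by duality, while the identity $(\|\lambda^{kq}_t\|_2 - \mathcal{F})^+ + \mathcal{F} = \|\lambda^{kq}_t\|_2 - (\|\lambda^{kq}_t\|_2 - \mathcal{F})^-$ recombines the remaining pieces into the two signed scalar-product terms appearing in the statement.

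The main obstacle is bookkeeping: there are many signs and splits happening simultaneously, and one must verify at each step that the discarded terms have the correct sign (they arise from pairings between quantities of known sign, like $\lambda_n \geq 0$, $(\lambda^{kq}_n)^- \leq 0$, $(g - u^{hp}_n)^- \leq 0$, $(\|\lambda^{kq}_t\|_2 - \mathcal{F})^- \leq 0$). Once this is handled carefully, both the normal and tangential bounds reduce to elementary applications of the $\tilde{H}^{-1/2}(\Gamma_C)$–$H^{1/2}(\Gamma_C)$ duality, and summing the two contributions yields exactly the asserted estimate.
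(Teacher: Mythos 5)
Your proposal is correct and takes essentially the same approach as the paper's proof: the same complementarity relations, the same $v=v^++v^-$ splits of the normal and tangential residuals, the same sign arguments to discard nonnegative pairings, and the same final $H^{-1/2}$--$H^{1/2}$ duality bounds. The only cosmetic difference is the order in which you introduce the gap and the sign decomposition of $\lambda^{kq}_n$ in the normal part; the retained and discarded terms coincide with the paper's.
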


\begin{lemma} \label{thm:aposterioriError_only_u}
Let $(u,\lambda)$, $(u^{hp},\lambda^{kq})$ be the solution of \eqref{eq:MixedProblem}, \eqref{eq:DiscreteMixedProblem} respectively. Then there exists a constant $C$ independent of $h$, $p$, $k$ and $q$ such that
\begin{align*}
&C\left(\left\|u-u^{hp}\right\|^2_{1/2,\Gamma_\Sigma} + \left\|\psi-\psi^{hp} \right\|^2_{-1/2,\Gamma_\Sigma} \right)
\\& \leq
 \sum_{E\in \mathcal{T}_h \cap \Gamma_N} \left(\frac{h_E}{p_E}\right) \left\|  t-S_{hp}u^{hp}\right\|^2_{0,E} +  \sum_{E\in \mathcal{T}_h \cap \Gamma_C} \left(\frac{h_E}{p_E} + \frac{h_{E}}{p_{E}^{2}}  \right)\left\|   -\lambda^{kq}-S_{hp}u^{hp}\right\|^2_{0,E}
\\
&\qquad + \sum_{E \in \mathcal{T}_{h}}h_E \left\|\frac{\partial}{\partial s} \left(V\psi^{hp} -(K+\frac{1}{2})u^{hp}\right) \right\|^2_{L^2(E)}
+\left\langle \left(\lambda^{kq}_n\right)^+,\left(g-u^{hp}_n\right)^+\right\rangle_{\Gamma_C}\\ \qquad &+
\epsilon \left\|\lambda^{kq}_n -\lambda_n \right\|^2_{-\frac{1}{2},\Gamma_C} + \frac{1}{4\epsilon} \left\| \left(g-u^{hp}_n\right)^-\right\|^2_{\frac{1}{2},\Gamma_C} \\
&\qquad+ \left\|\left(\lambda^{kq}_n\right)^-\right\|^2_{-\frac{1}{2},\Gamma_C} + \left\| \left(\left\|\lambda^{kq}_t\right\|_2-\mathcal{F} \right)^+ \right\|^2_{-\frac{1}{2},\Gamma_C}\\& \qquad  -\left\langle \left(\left\|\lambda^{kq}_t\right\|_2-\mathcal{F} \right)^-, \left\|u^{hp}_t\right\|_2 \right\rangle_{\Gamma_C}
-\left\langle  \lambda^{kq}_t,u^{hp}_t \right\rangle_{\Gamma_C}\\ & \qquad
 +\left\langle \left\|\lambda^{kq}_t\right\|_2 , \left\|u^{hp}_t\right\|_2 \right\rangle_{\Gamma_C}
\end{align*}
with $\epsilon>0$ arbitrary.

\begin{proof}
Since $u-u^{hp} \in \tilde{H}^{1/2}(\Gamma_\Sigma)$ there holds
\begin{align*}
C\left(\left\|u-u^{hp}\right\|^2_{1/2,\Gamma_\Sigma} + \left\|\psi-\psi^{hp} \right\|^2_{-1/2,\Gamma_\Sigma} \right) &\leq \left\langle W(u-u^{hp}),u-u^{hp}\right\rangle_{\Gamma_\Sigma} + \left\langle V(\psi-\psi^{hp}),\psi-\psi^{hp}\right\rangle_{\Gamma_\Sigma} \\
&=\left\langle Su-S_{hp}u^{hp},u-u^{hp}\right\rangle_{\Gamma_\Sigma} + \left\langle V(\psi_{hp}^*-\psi^{hp}),\psi-\psi^{hp}\right\rangle_{\Gamma_\Sigma}
\end{align*}
From Lemma~\ref{lem:galerkinOrtho} and \ref{eq:WeakMixedVarEq} it follows that
\begin{align*}
\left\langle Su-\right.\left.S_{hp}u^{hp},u-u^{hp}\right\rangle_{\Gamma_\Sigma} &= \left\langle Su-S_{hp}u^{hp},u-u^{hp}\right\rangle_{\Gamma_\Sigma} +  \langle Su-S_{hp} u^{hp} ,u^{hp}-v^{hp}\rangle_{\Gamma_\Sigma} \\&+ \left\langle \lambda-\lambda^{kq},u^{hp}-v^{hp}\right\rangle_{\Gamma_C} 
+ \left\langle \gamma(\lambda^{kq}+S_{hp}u^{hp}),S_{hp}(u^{hp}-v^{hp}) \right\rangle_{\Gamma_C}\\
=& \left\langle t-S_{hp}u^{hp},u-v^{hp}\right\rangle_{\Gamma_N} +  \left\langle -\lambda^{kq}-S_{hp}u^{hp},u-v^{hp}\right\rangle_{\Gamma_C}
 + \left\langle \lambda-\lambda^{kq},u^{hp}-u\right\rangle_{\Gamma_C}\\ & + \left\langle  \gamma(\lambda^{kq}+S_{hp}u^{hp}),S_{hp}(u^{hp}-v^{hp}) \right\rangle_{\Gamma_C}.
\end{align*}
Let $I_{hp}$ be the Clement-Interpolation operator mapping onto $\mathcal{V}_{hp}$ with the property (see \cite{melenk2001residual} and interpolation between $L^2$ and $H^1$)
\begin{align*}
\left\|v-I_{hp} v\right\|_{L^2(E)} \leq C \left(\frac{h_E}{p_E}\right)^{1/2} \left\|v\right\|_{H^{1/2}(\omega(E))}.
\end{align*}
with $\omega(E)$ a net around $E$. Then, an application of the Cauchy-Schwarz inequality yields with $v^{hp}:=u^{hp}+ I_{hp}(u-u^{hp})$
\begin{align*}
\left\langle t-S_{hp}u^{hp},u-v^{hp}\right\rangle_{\Gamma_N}  &\leq C \sum_{E\in \mathcal{T}_h \cap \Gamma_N} \left(\frac{h_E}{p_E}\right)^{1/2} \left\|  t-S_{hp}u^{hp}\right\|_{0,E} \left\|u-u^{hp}\right\|_{1/2,\omega(E)} \\
\left\langle -\lambda^{kq}-S_{hp}u^{hp},u-v^{hp}\right\rangle_{\Gamma_C} &\leq  C \sum_{E\in \mathcal{T}_h \cap \Gamma_C} \left(\frac{h_E}{p_E}\right)^{1/2} \left\|   -\lambda^{kq}-S_{hp}u^{hp}\right\|_{0,E} \left\|u-u^{hp}\right\|_{1/2,\omega(E)}
\end{align*}
Since $u_{hp} \in \mathcal{V}_{hp} \subset H^{1}_0(\Gamma_\Sigma)$ and $\psi_{hp} \in V^D_{hp} \subset L^2(\Gamma_\Sigma)$
the mapping properties of $V$ and $K$ \cite{costabel1988boundary} yield
\[
V (\psi^{hp} - \psi_{hp}^*) = V \psi^{hp} - (K+\frac{1}{2}) u^{hp} \in H^1(\Gamma_\Sigma) \subset C^0(\Gamma_\Sigma).
\]
Furthermore, $ V (\psi^{hp} - \psi_{hp}^*) $ is orthogonal in $L^2(\Gamma_\Sigma)$ to $V^D_{hp}$, Lemma~\ref{lem:V_ortho}. Hence, for the characteristic function $\chi_E \in V^D_{hp}$ of an element $E \in \mathcal{T}_{h}$ there holds
\[
0 =\left\langle  V (\psi^{hp} - \psi_{hp}^*),\chi_E \right\rangle_{\Gamma_\Sigma} = \int_E V (\psi^{hp} - \psi_{hp}^*)\;ds\ ,
\]
and therefore the continuous function $ V (\psi_{hp} - \psi_{hp}^*) $ has a root on each boundary segment~$E$. Since $V (\psi_{hp} - \psi_{hp}^*) \in H^1(\Gamma_\Sigma)$, the application of \cite[Theorem 5.1]{carstensen1996posteriori} yields
\begin{align*}
\left\langle V (\psi^{hp} - \psi_{hp}^*), \psi - \psi^{hp} \right\rangle_{\Gamma_\Sigma}
&\leq \left\| V (\psi^{hp} - \psi_{hp}^*) \right\|_{{H}^{\frac{1}{2}}(\Gamma_\Sigma)} \left\| \psi^{hp} - \psi \right\|_{\tilde{H}^{-\frac{1}{2}}(\Gamma_\Sigma)} \\
& \leq C \left(\sum_{E \in \mathcal{T}_{h}}h_E \left\|\frac{\partial}{\partial s} (V(\psi^{hp} - \psi_{hp}^*) \right\|^2_{L^2(E)}\right)^{\frac{1}{2}}\left\| \psi^{hp} - \psi \right\|_{\tilde{H}^{-\frac{1}{2}}(\Gamma_\Sigma)}.
\end{align*}
Since  $v^{hp}=u^{hp}+I_{hp}(u-u^{hp})$, there holds by Cauchy-Schwarz inequality (twice), Theorem~\ref{thm:inverseEstimate} and the $H^{1/2}$-stability of $I_{hp}$ that
 \begin{align*}
  & \left\langle  \gamma(\lambda^{kq}+S_{hp}u^{hp}),S_{hp}(u^{hp}-v^{hp}) \right\rangle_{\Gamma_C} \\
  &= \gamma_{0} \sum_{E\in\Gamma_{C}}\int_{E}\left(\frac{h_{E}^{\frac{1}{2}}}{p_{E}}\right)(\lambda^{kq}+S_{hp}u^{hp})\left(\frac{h_{E}^{\frac{1}{2}}}{p_{E}}\right)S_{hp}(I_{hp}(u^{hp}-u)) \:ds \\
  &\leq\gamma_{0}\left(\sum_{E\in\Gamma_{C}}\frac{h_{E}}{p_{E}^{2}}\left\|\lambda^{kq}+S_{hp}u^{hp}\right\|^{2}_{L^{2}(E)}\right)^{\frac{1}{2}}\left(\sum_{E\in\Gamma_{C}} \left\|\frac{h_{E}^{\frac{1}{2}}}{p_{E}}S_{hp}(I_{hp}(u^{hp}-u))\right\|^{2}_{L^{2}(E)}\right)^{\frac{1}{2}}\\
  &\leq C \left(\sum_{E\in\Gamma_{C}}\frac{h_{E}}{p_{E}^{2}} \left\|\lambda^{kq}+Su^{hp} \right\|^{2}_{L^2(E)}\right)^{\frac{1}{2}} \left\| u-u^{hp} \right\|_{\tilde{H}^{\frac{1}{2}}(\Gamma_\Sigma)}.
\end{align*}
In total this yields with Lemma~\ref{lem:aposterioriContactPart} that
\begin{align*}
&C\left(\left\|u-u^{hp}\right\|^2_{1/2,\Gamma_\Sigma} + \left\|\psi-\psi^{hp} \right\|^2_{-1/2,\Gamma_\Sigma} \right)\\&
 \leq \sum_{E\in \mathcal{T}_h \cap \Gamma_N} \left(\frac{h_E}{p_E}\right)^{1/2} \left\|  t-S_{hp}u^{hp}\right\|_{0,E} \left\|u-u^{hp}\right\|_{1/2,\omega(E)} \\
&+  \sum_{E\in \mathcal{T}_h \cap \Gamma_C} \left(\frac{h_E}{p_E}\right)^{1/2} \left\|   -\lambda^{kq}-S_{hp}u^{hp}\right\|_{0,E} \left\|u-u^{hp}\right\|_{1/2,\omega(E)} \\
&+
\left(\sum_{E\in\Gamma_{C}}\frac{h_{E}}{p_{E}^{2}} \left\|\lambda^{kq}+Su^{hp} \right\|^{2}_{L^2(E)}\right)^{\frac{1}{2}} \left\| u-u^{hp} \right\|_{\tilde{H}^{\frac{1}{2}}(\Gamma_\Sigma)}\\
&+\left(\sum_{E \in \mathcal{T}_{h}}h_E \left\|\frac{\partial}{\partial s} (V(\psi^{hp} - \psi_{hp}^*) \right\|^2_{L^2(E)}\right)^{\frac{1}{2}}\left\| \psi^{hp} - \psi \right\|_{\tilde{H}^{-\frac{1}{2}}(\Gamma_\Sigma)}\\
&+\left\langle \left(\lambda^{kq}_n\right)^+,\left(g-u^{hp}_n\right)^+\right\rangle_{\Gamma_C}+ \left\|\lambda^{kq}_n -\lambda_n \right\|_{-\frac{1}{2},\Gamma_C} \left\| \left(g-u^{hp}_n\right)^-\right\|_{\frac{1}{2},\Gamma_C} \\
&+ \left\|\left(\lambda^{kq}_n\right)^-\right\|_{-\frac{1}{2},\Gamma_C} \left\|u^{hp}_n-u_n \right\|_{\frac{1}{2},\Gamma_C} + \left\| \left(\left\|\lambda^{kq}_t\right\|_2-\mathcal{F} \right)^+ \right\|_{-\frac{1}{2},\Gamma_C} \left\| \:\left\|u_t-u^{hp}_t\right\|_2 \right\|_{\frac{1}{2},\Gamma_C}\\ &  -\left\langle \left(\left\|\lambda^{kq}_t\right\|_2-\mathcal{F} \right)^-, \left\|u^{hp}_t\right\|_2 \right\rangle_{\Gamma_C}
-\left\langle  \lambda^{kq}_t,u^{hp}_t \right\rangle_{\Gamma_C}
 +\left\langle \left\|\lambda^{kq}_t\right\|_2 , \left\|u^{hp}_t\right\|_2 \right\rangle_{\Gamma_C}\ .
\end{align*}
The assertion follows with Young's inequality.
\end{proof}
\end{lemma}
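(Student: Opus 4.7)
The plan is to start from ellipticity of $W$ on $\tilde{H}^{1/2}(\Gamma_\Sigma)$ and of $V$ on $H^{-1/2}(\Gamma_\Sigma)$, so that
\begin{align*}
C\bigl(\|u-u^{hp}\|_{1/2,\Gamma_\Sigma}^2 + \|\psi-\psi^{hp}\|_{-1/2,\Gamma_\Sigma}^2\bigr)
\le \langle Su - S_{hp}u^{hp},\, u-u^{hp}\rangle_{\Gamma_\Sigma} + \langle V(\psi_{hp}^*-\psi^{hp}),\,\psi-\psi^{hp}\rangle_{\Gamma_\Sigma},
\end{align*}
where the right-hand side comes from unfolding $S_{hp}v = Wv + (K^\top+\tfrac12) V_{hp}^{-1}(K+\tfrac12)v$ and the definitions of $\psi,\psi^{hp},\psi_{hp}^*$ in \eqref{eq:Lame:defpsi}. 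This reduces the task to controlling two residual-type quantities.

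Next I would apply Galerkin orthogonality (Lemma~\ref{lem:galerkinOrtho}) with an arbitrary $v^{hp}\in\mathcal{V}_{hp}$ and substitute the continuous equation \eqref{eq:WeakMixedVarEq} to replace $Su$ by $t$ on $\Gamma_N$ and by $-\lambda$ on $\Gamma_C$. This rewrites the first term as
\begin{align*}
\langle t-S_{hp}u^{hp},\, u-v^{hp}\rangle_{\Gamma_N} + \langle -\lambda^{kq}-S_{hp}u^{hp},\, u-v^{hp}\rangle_{\Gamma_C} + \langle \lambda-\lambda^{kq},\, u^{hp}-u\rangle_{\Gamma_C} + \langle \gamma(\lambda^{kq}+S_{hp}u^{hp}),\, S_{hp}(u^{hp}-v^{hp})\rangle_{\Gamma_C}.
\end{align*}
I would then fix $v^{hp} = u^{hp} + I_{hp}(u-u^{hp})$ with $I_{hp}$ a Cl\'ement-type quasi-interpolation onto $\mathcal{V}_{hp}$ satisfying the local $L^2$-approximation property $\|w-I_{hp}w\|_{L^2(E)}\le C(h_E/p_E)^{1/2}\|w\|_{H^{1/2}(\omega(E))}$ and $H^{1/2}$-stability. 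Element-wise Cauchy--Schwarz on the two residual terms then produces the weighted $L^2$-norms of $t-S_{hp}u^{hp}$ and of $-\lambda^{kq}-S_{hp}u^{hp}$.

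For the dual-variable term $\langle V(\psi_{hp}^*-\psi^{hp}),\psi-\psi^{hp}\rangle_{\Gamma_\Sigma}$ I would exploit Lemma~\ref{lem:V_ortho}: the Galerkin residual $V(\psi^{hp}-\psi_{hp}^*) = V\psi^{hp} - (K+\tfrac12)u^{hp}$ is $L^2$-orthogonal to every characteristic function $\chi_E\in \mathcal{V}_{hp}^D$, so it has zero mean, hence a root, on each element $E$. Combined with its $H^1$-regularity on $\Gamma_\Sigma$ (from mapping properties of $V,K$ applied to $\psi^{hp}\in L^2$ and $u^{hp}\in H^1_0$), a Poincar\'e-type inequality of Carstensen \cite{carstensen1996posteriori} yields $\|V(\psi^{hp}-\psi_{hp}^*)\|_{H^{1/2}(\Gamma_\Sigma)}^2 \lesssim \sum_E h_E \|\partial_s(V\psi^{hp}-(K+\tfrac12)u^{hp})\|_{L^2(E)}^2$, which after Cauchy--Schwarz in duality $H^{1/2}\times \tilde H^{-1/2}$ gives the jump term in the asserted bound.

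The stabilization contribution $\langle\gamma(\lambda^{kq}+S_{hp}u^{hp}),S_{hp}I_{hp}(u^{hp}-u)\rangle_{\Gamma_C}$ is handled by element-wise Cauchy--Schwarz, pulling out the weights $h_E^{1/2}p_E^{-1}$, and then invoking the inverse estimate of Theorem~\ref{thm:inverseEstimate} together with $H^{1/2}$-stability of $I_{hp}$ to absorb the $S_{hp}$; this produces the $h_E/p_E^2$-weighted residual on $\Gamma_C$ (note the extra factor $p_E^{-1}$ compared to the previous residual term). For the remaining term $\langle \lambda-\lambda^{kq},u^{hp}-u\rangle_{\Gamma_C}$ I would directly invoke Lemma~\ref{lem:aposterioriContactPart}. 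Finally, Young's inequality with a small $\epsilon$ absorbs the factors $\|u-u^{hp}\|_{1/2,\omega(E)}$ and $\|\psi-\psi^{hp}\|_{-1/2}$ into the left-hand side. The delicate step I expect to be the main obstacle is the $V$-term analysis: one needs to combine the discrete orthogonality to $\mathcal{V}_{hp}^D$, a pointwise zero on each segment, and an $h$-weighted Poincar\'e inequality adapted to curves, in order to produce the $\sum_E h_E\|\partial_s(\cdot)\|^2_{L^2(E)}$ indicator rather than a too-crude global bound.
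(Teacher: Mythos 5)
Your proposal follows the paper's own proof essentially verbatim: the same coercivity splitting into the $W$- and $V$-terms, the same Galerkin-orthogonality rewrite combined with \eqref{eq:WeakMixedVarEq}, the same choice $v^{hp}=u^{hp}+I_{hp}(u-u^{hp})$ with element-wise Cauchy--Schwarz, the same treatment of $V(\psi^{hp}-\psi_{hp}^*)$ via the zero-mean/root argument and Carstensen's $H^{1/2}$-estimate, the same use of Theorem~\ref{thm:inverseEstimate} for the stabilization term, the appeal to Lemma~\ref{lem:aposterioriContactPart} for the contact term, and the final absorption by Young's inequality. No substantive deviation or gap.
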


\begin{lemma} \label{lem:aposteriori_lambda}
Let $(u,\lambda)$, $(u^{hp},\lambda^{kq})$ be the solution of \eqref{eq:MixedProblem}, \eqref{eq:DiscreteMixedProblem} respectively. Then there holds
\begin{align*}
\frac{ \tilde{\beta}}{C} \left\| \lambda-\lambda^{kq} \right\|_{\tilde{H}^{-1/2}(\Gamma_C)}  \leq & \left\|u-u^{hp}\right\|_{\tilde{H}^{1/2}(\Gamma_\Sigma)}  + \left\|\psi-\psi^{hp}\right\|_{\tilde{H}^{-1/2}(\Gamma_\Sigma)}  + \left(\sum_{E\in\Gamma_{C}}\frac{h_{E}}{p_{E}^{2}} \left\|\lambda^{kq}+Su^{hp} \right\|^{2}_{L^2(E)}\right)^{\frac{1}{2}}  \\
&+ \left(\sum_{E\in \mathcal{T}_h \cap \Gamma_N} \frac{h_E}{p_E} \left\|  t-S_{hp}u^{hp}\right\|^2_{0,E} \right)^{1/2} + \left( \sum_{E\in \mathcal{T}_h \cap \Gamma_C}  \frac{h_E}{p_E} \left\|   -\lambda^{kq}-S_{hp}u^{hp}\right\|^2_{0,E} \right)^{1/2}
 \end{align*}
\begin{proof}
Let $v \in \tilde{H}^{1/2}(\Gamma_\Sigma)$ and $v^{hp}:=I_{hp}v \in \mathcal{V}_{hp}$, then by Lemma~\ref{lem:galerkinOrtho} and \eqref{eq:WeakMixedVarEq} there holds
\begin{align*}
\left\langle \lambda-\lambda^{kq},v\right\rangle_{\Gamma_C} &= \left\langle \lambda-\lambda^{kq},v-v^{hp}\right\rangle_{\Gamma_C} - \langle Su-S_{hp} u^{hp} ,v^{hp}\rangle_{\Gamma_\Sigma} -  \left\langle \gamma(\lambda^{kq}+S_{hp}u^{hp}),S_{hp}v^{hp} \right\rangle_{\Gamma_C} \\
&= \left\langle t,v-v^{hp}\right\rangle_{\Gamma_N} - \left\langle Su,v-v^{hp}\right\rangle_{\Gamma_\Sigma} -\left\langle \lambda^{kq},v-v^{hp}\right\rangle_{\Gamma_C} - \langle Su-S_{hp} u^{hp} ,v^{hp}\rangle_{\Gamma_\Sigma} \\ &\qquad -  \left\langle \gamma(\lambda^{kq}+S_{hp}u^{hp}),S_{hp}v^{hp}  \right\rangle_{\Gamma_C}\\
&= \left\langle t-S_{hp}u^{hp},v-v^{hp}\right\rangle_{\Gamma_N} +\left\langle -\lambda^{kq}-S_{hp}u^{hp},v-v^{hp}\right\rangle_{\Gamma_C}\\&\qquad - \left\langle Su-S_{hp} u^{hp},v\right\rangle_{\Gamma_\Sigma}  -  \left\langle \gamma(\lambda^{kq}+S_{hp}u^{hp}),S_{hp}v^{hp}  \right\rangle_{\Gamma_C}
 \end{align*}
For the third term we obtain by the definition of $\psi$ and $\psi^{hp}$ in \eqref{eq:Lame:defpsi} and by the continuity of the operators that
\begin{align*}
\left\langle Su-S_{hp} u^{hp},v\right\rangle_{\Gamma_\Sigma} &= \left\langle W(u-u^{hp}) + (K^\top+\frac{1}{2})(\psi-\psi^{hp}),v\right\rangle_{\Gamma_\Sigma} \\
& \leq C_W \left\|u-u^{hp}\right\|_{\tilde{H}^{1/2}(\Gamma_\Sigma)}\left\|v\right\|_{\tilde{H}^{1/2}(\Gamma_\Sigma)} \\&\qquad + (C_{K^\top}+\frac{1}{2})  \left\|\psi-\psi^{hp}\right\|_{\tilde{H}^{-1/2}(\Gamma_\Sigma)}\left\|v\right\|_{\tilde{H}^{1/2}(\Gamma_\Sigma)}\ .
\end{align*}
The first two and the last term can be handled as in Lemma~\ref{thm:aposterioriError_only_u}, leading to
\begin{align*}
\frac{1}{C} \left\langle \lambda-\lambda^{kq},v\right\rangle_{\Gamma_C} & \leq \left\|u-u^{hp}\right\|_{\tilde{H}^{1/2}(\Gamma_\Sigma)}\left\|v\right\|_{\tilde{H}^{1/2}(\Gamma_\Sigma)} + \left\|\psi-\psi^{hp}\right\|_{\tilde{H}^{-1/2}(\Gamma_\Sigma)}\left\|v\right\|_{\tilde{H}^{1/2}(\Gamma_\Sigma)}  \\ &\qquad + \left(\sum_{E\in\Gamma_{C}}\frac{h_{E}}{p_{E}^{2}} \left\|\lambda^{kq}+Su^{hp} \right\|^{2}_{L^2(E)}\right)^{\frac{1}{2}} \left\| v \right\|_{\tilde{H}^{\frac{1}{2}}(\Gamma_\Sigma)}\\
&\qquad + \sum_{E\in \mathcal{T}_h \cap \Gamma_N} \left(\frac{h_E}{p_E}\right)^{1/2} \left\|  t-S_{hp}u^{hp}\right\|_{0,E} \left\|v\right\|_{1/2,\omega(E)}\\
&\qquad +  \sum_{E\in \mathcal{T}_h \cap \Gamma_C} \left(\frac{h_E}{p_E}\right)^{1/2} \left\|   -\lambda^{kq}-S_{hp}u^{hp}\right\|_{0,E} \left\|v\right\|_{1/2,\omega(E)}
 \end{align*}
The assertion follows from the continuous inf-sup condition \eqref{eq:lame:infsup} and Cauchy-Schwarz inequality.
\end{proof}
\end{lemma}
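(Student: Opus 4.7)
The plan is to invoke the continuous inf-sup condition from Theorem~\ref{thm:infsup} applied to $\mu = \lambda - \lambda^{kq} \in \tilde{H}^{-1/2}(\Gamma_C)$, which reduces the task to controlling the bilinear pairing $\langle \lambda - \lambda^{kq}, v\rangle_{\Gamma_C}$ for an arbitrary test function $v \in \tilde{H}^{1/2}(\Gamma_\Sigma)$ by the claimed right-hand side times $\|v\|_{\tilde{H}^{1/2}(\Gamma_\Sigma)}$. Once this is done, dividing by $\|v\|_{\tilde{H}^{1/2}(\Gamma_\Sigma)}$, taking the supremum over $v\neq 0$, and multiplying by $\tilde{\beta}$ closes the argument.

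To bound that pairing I would introduce a Cl\'ement--Melenk type quasi-interpolant $v^{hp} := I_{hp} v \in \mathcal{V}_{hp}$ with the local property $\|v - I_{hp} v\|_{L^2(E)} \leq C(h_E/p_E)^{1/2} \|v\|_{H^{1/2}(\omega(E))}$ and global $H^{1/2}$-stability, and split $\langle \lambda - \lambda^{kq}, v\rangle_{\Gamma_C}$ according to $v=(v-v^{hp})+v^{hp}$. On the first piece I rewrite $\langle \lambda, v - v^{hp}\rangle_{\Gamma_C}$ via the continuous equation \eqref{eq:WeakMixedVarEq}, while on the second piece I apply the Galerkin orthogonality of Lemma~\ref{lem:galerkinOrtho}. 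Adding and subtracting $S_{hp} u^{hp}$ and regrouping produces the identity
\begin{align*}
\langle \lambda - \lambda^{kq}, v\rangle_{\Gamma_C} &= \langle t - S_{hp} u^{hp}, v - v^{hp}\rangle_{\Gamma_N} + \langle -\lambda^{kq} - S_{hp} u^{hp}, v - v^{hp}\rangle_{\Gamma_C} \\
&\quad - \langle Su - S_{hp} u^{hp}, v\rangle_{\Gamma_\Sigma} - \langle \gamma(\lambda^{kq} + S_{hp} u^{hp}), S_{hp} v^{hp}\rangle_{\Gamma_C},
\end{align*}
which already exposes the four residual groups present in the target bound.

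Each group is then controlled by a standard ingredient: the two boundary residuals on $\Gamma_N$ and $\Gamma_C$ are handled element-wise by Cauchy--Schwarz combined with the Cl\'ement approximation bound, yielding the weighted sums $(\sum_E (h_E/p_E) \|\cdot\|_{L^2(E)}^2)^{1/2}$ times $\|v\|_{\tilde{H}^{1/2}(\Gamma_\Sigma)}$ after using finite overlap of the patches $\omega(E)$; the consistency term $\langle Su - S_{hp} u^{hp}, v\rangle_{\Gamma_\Sigma}$ is estimated through the decomposition $Su - S_{hp} u^{hp} = W(u - u^{hp}) + (K^\top + \tfrac{1}{2})(\psi - \psi^{hp})$ and the continuity of $W$ and $K^\top$, producing the $\|u - u^{hp}\|_{\tilde{H}^{1/2}(\Gamma_\Sigma)}$ and $\|\psi - \psi^{hp}\|_{\tilde{H}^{-1/2}(\Gamma_\Sigma)}$ contributions; for the stabilization term I weight with $\gamma^{1/2} \sim (h_E/p_E^2)^{1/2}$, apply Cauchy--Schwarz, invoke the inverse estimate of Theorem~\ref{thm:inverseEstimate} to bound $(\sum_E (h_E/p_E^2) \|S_{hp} v^{hp}\|_{L^2(E)}^2)^{1/2}$ by $C \|v^{hp}\|_{\tilde{H}^{1/2}(\Gamma_\Sigma)}$, and finally use $H^{1/2}$-stability of $I_{hp}$.

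The main obstacle I anticipate is the bookkeeping required to keep every intermediate bound as a clean multiple of the global norm $\|v\|_{\tilde{H}^{1/2}(\Gamma_\Sigma)}$: the Cl\'ement bound produces local $H^{1/2}$-norms on patches that must be summed via finite overlap, and in the stabilization term one must resist the temptation to bound $S_{hp} v^{hp}$ by an $H^{-1/2}$-continuity estimate (which would give a useless $\|v\|_{L^2}$) and instead exploit the localized inverse inequality tailored to the stabilization weights. Once each term carries exactly one copy of $\|v\|_{\tilde{H}^{1/2}(\Gamma_\Sigma)}$, the continuous inf-sup condition \eqref{eq:lame:infsup} yields the lemma.
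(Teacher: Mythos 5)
Your proposal is correct and follows exactly the paper's own argument: same quasi-interpolant $v^{hp}=I_{hp}v$, same use of \eqref{eq:WeakMixedVarEq} and Lemma~\ref{lem:galerkinOrtho} to arrive at the identical four-term identity, same decomposition $Su-S_{hp}u^{hp}=W(u-u^{hp})+(K^\top+\tfrac12)(\psi-\psi^{hp})$ for the consistency term, and same appeal to Theorem~\ref{thm:inverseEstimate} for the stabilization term before closing via the continuous inf-sup condition \eqref{eq:lame:infsup}. No gaps; the only cosmetic quibble is that your aside about $H^{-1/2}$-continuity ``giving a useless $\|v\|_{L^2}$'' misdescribes what the naive bound would produce (the real issue is that the $L^2$ pairing with the $\gamma$-weight calls for an element-wise $L^2$ control of $S_{hp}v^{hp}$, which only the inverse estimate delivers), but your actual chosen route is the right one.
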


Combining the two Lemmas~\ref{thm:aposterioriError_only_u} and \ref{lem:aposteriori_lambda} immediately yields the following theorem if in Lemma~\ref{thm:aposterioriError_only_u} $\epsilon>0$ is chosen sufficiently small.

\begin{theorem}[Residual based error estimate] \label{thm:aposteriori}
Let $(u,\lambda)$, $(u^{hp},\lambda^{kq})$ be the solution of \eqref{eq:MixedProblem}, \eqref{eq:DiscreteMixedProblem} respectively. Then there holds
\begin{align*}
&C\left(\left\|u-u^{hp}\right\|^2_{1/2,\Gamma_\Sigma} + \left\|\psi-\psi^{hp} \right\|^2_{-1/2,\Gamma_\Sigma} + \left\|\lambda^{kq} -\lambda \right\|^2_{-\frac{1}{2},\Gamma_C}  \right)\\
 &\leq
 \sum_{E\in \mathcal{T}_h \cap \Gamma_N} \left(\frac{h_E}{p_E}\right) \left\|  t-S_{hp}u^{hp}\right\|^2_{0,E} +  \sum_{E\in \mathcal{T}_h \cap \Gamma_C} \left(\frac{h_E}{p_E} + \frac{h_{E}}{p_{E}^{2}}  \right)\left\|   -\lambda^{kq}-S_{hp}u^{hp}\right\|^2_{0,E}
\\
&\qquad + \sum_{E \in \mathcal{T}_{h}}h_E \left\|\frac{\partial}{\partial s} \left(V\psi^{hp} -(K+\frac{1}{2})u^{hp}\right) \right\|^2_{L^2(E)}
+\left\langle \left(\lambda^{kq}_n\right)^+,\left(g-u^{hp}_n\right)^+\right\rangle_{\Gamma_C}+  \left\| \left(g-u^{hp}_n\right)^-\right\|^2_{\frac{1}{2},\Gamma_C} \\
&\qquad + \left\|\left(\lambda^{kq}_n\right)^-\right\|^2_{-\frac{1}{2},\Gamma_C} + \left\| \left(\left\|\lambda^{kq}_t\right\|_2-\mathcal{F} \right)^+ \right\|^2_{-\frac{1}{2},\Gamma_C}  -\left\langle \left(\left\|\lambda^{kq}_t\right\|_2-\mathcal{F} \right)^-, \left\|u^{hp}_t\right\|_2 \right\rangle_{\Gamma_C}
\\& \qquad-\left\langle  \lambda^{kq}_t,u^{hp}_t \right\rangle_{\Gamma_C}
 +\left\langle \left\|\lambda^{kq}_t\right\|_2 , \left\|u^{hp}_t\right\|_2 \right\rangle_{\Gamma_C} \ .
\end{align*}

\end{theorem}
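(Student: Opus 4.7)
The plan is to derive the asserted combined residual estimate by coupling the two preceding lemmas. I start from Lemma~\ref{thm:aposterioriError_only_u}, which already bounds $\|u-u^{hp}\|^2_{1/2,\Gamma_\Sigma}+\|\psi-\psi^{hp}\|^2_{-1/2,\Gamma_\Sigma}$ by the residual-type quantities appearing in the theorem, except for one remaining term of the form $\epsilon\,\|\lambda_n^{kq}-\lambda_n\|^2_{-1/2,\Gamma_C}$, which must be absorbed into the left-hand side once the multiplier error is added there.

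Next I use Lemma~\ref{lem:aposteriori_lambda} to control $\|\lambda-\lambda^{kq}\|_{\tilde{H}^{-1/2}(\Gamma_C)}$. Squaring that inequality and applying $(a_1+\cdots+a_n)^2\le n(a_1^2+\cdots+a_n^2)$ yields an estimate of $\|\lambda-\lambda^{kq}\|^2_{\tilde{H}^{-1/2}(\Gamma_C)}$ by a constant multiple of
\[
\|u-u^{hp}\|^2_{\tilde{H}^{1/2}(\Gamma_\Sigma)} +\|\psi-\psi^{hp}\|^2_{\tilde{H}^{-1/2}(\Gamma_\Sigma)} +\sum_{E\in\Gamma_C}\frac{h_E}{p_E^2}\|\lambda^{kq}+S u^{hp}\|^2_{L^2(E)}
\]
plus the two element residuals on $\Gamma_N$ and $\Gamma_C$. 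These last three sums all appear in the statement of Theorem~\ref{thm:aposteriori}, so they will only contribute to the right-hand side.

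I then form the linear combination: the full estimate of Lemma~\ref{thm:aposterioriError_only_u} plus $\delta$ times the squared version of Lemma~\ref{lem:aposteriori_lambda}, for a sufficiently small constant $\delta>0$. This puts $C\|u-u^{hp}\|^2_{1/2}+C\|\psi-\psi^{hp}\|^2_{-1/2}+\delta\tilde{\beta}^2 C^{-2}\|\lambda-\lambda^{kq}\|^2_{-1/2,\Gamma_C}$ on the left. On the right I now see $\epsilon\|\lambda^{kq}-\lambda\|^2_{-1/2,\Gamma_C}$ (from Lemma~\ref{thm:aposterioriError_only_u}) together with $\delta$ times $\|u-u^{hp}\|^2$ and $\delta$ times $\|\psi-\psi^{hp}\|^2$ (from the squared Lemma~\ref{lem:aposteriori_lambda}). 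Choosing $\delta$ small enough relative to the coercivity constants in Lemma~\ref{thm:aposterioriError_only_u}, and then $\epsilon$ small enough relative to $\delta\tilde{\beta}^2/C^2$, I absorb all three offending terms into the left-hand side.

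The main, and essentially only, obstacle is a careful bookkeeping of the two smallness parameters $\epsilon$ and $\delta$ so that the hidden constants remain independent of $h,p,k,q$; this is straightforward since all constants in Lemmas~\ref{thm:aposterioriError_only_u} and \ref{lem:aposteriori_lambda} have this property. Every remaining term on the right-hand side of the combined inequality already matches one of the residual contributions listed in Theorem~\ref{thm:aposteriori} verbatim (element residuals on $\Gamma_N$ and $\Gamma_C$, the jump term $\frac{\partial}{\partial s}(V\psi^{hp}-(K+\tfrac{1}{2})u^{hp})$, and the complementarity/friction-compatibility terms such as $\langle(\lambda_n^{kq})^+,(g-u_n^{hp})^+\rangle$, $\|(g-u_n^{hp})^-\|^2_{1/2,\Gamma_C}$, $\|(\lambda_n^{kq})^-\|^2_{-1/2,\Gamma_C}$, $\|(\|\lambda_t^{kq}\|_2-\mathcal{F})^+\|^2_{-1/2,\Gamma_C}$, $-\langle(\|\lambda_t^{kq}\|_2-\mathcal{F})^-,\|u_t^{hp}\|_2\rangle$ and the two tangential product terms), so no further manipulation beyond the absorption step is needed to finish the proof.
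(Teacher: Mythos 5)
Your argument is correct and is exactly the route the paper takes: the paper states the theorem by noting that it follows by combining Lemma~\ref{thm:aposterioriError_only_u} with Lemma~\ref{lem:aposteriori_lambda} after choosing $\epsilon$ sufficiently small. You have simply made explicit the bookkeeping of the two smallness parameters ($\delta$ for the weight on the squared form of Lemma~\ref{lem:aposteriori_lambda}, then $\epsilon$ small relative to $\delta\tilde\beta^2/C^2$), which is the same absorption argument the paper leaves implicit.
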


It is worth pointing out, that the stabilization implies no additional term in the a posteriori error estimate compared to the non-stabilized case in \cite[Theorem~11]{Banz2014BEM} and does not even effect the scaling.

\begin{corollary}
For $\lambda^{kq} \in M^+_{kq}(\mathcal{F})$ or $\lambda^{kq} \in \tilde{M}^+_{kq}(\mathcal{F})$ with $q=1$, i.e.~$\lambda^{kq} \in M^+(\mathcal{F})$, the estimate of Theorem~\ref{thm:aposteriori} is reduced by  non-conformity terms and simplifies the complementarity and stick error contributions.
\begin{align*}
&C\left(\left\|u-u^{hp}\right\|^2_{1/2,\Gamma_\Sigma} + \left\|\psi-\psi^{hp} \right\|^2_{-1/2,\Gamma_\Sigma} + \left\|\lambda^{kq} -\lambda \right\|^2_{-\frac{1}{2},\Gamma_C}  \right)
\\ & \leq
 \sum_{E\in \mathcal{T}_h \cap \Gamma_N} \left(\frac{h_E}{p_E}\right) \left\|  t-S_{hp}u^{hp}\right\|^2_{0,E} +  \sum_{E\in \mathcal{T}_h \cap \Gamma_C} \left(\frac{h_E}{p_E} + \frac{h_{E}}{p_{E}^{2}}  \right)\left\|   -\lambda^{kq}-S_{hp}u^{hp}\right\|^2_{0,E}
\\
&\qquad + \sum_{E \in \mathcal{T}_{h}}h_E \left\|\frac{\partial}{\partial s} \left(V\psi^{hp} -(K+\frac{1}{2})u^{hp}\right) \right\|^2_{L^2(E)}\\ & \qquad
+\left\langle \lambda^{kq}_n,\left(g-u^{hp}_n\right)^+\right\rangle_{\Gamma_C}+  \left\| \left(g-u^{hp}_n\right)^-\right\|^2_{\frac{1}{2},\Gamma_C} -\left\langle \left\|\lambda^{kq}_t\right\|_2-\mathcal{F} , \left\|u^{hp}_t\right\|_2 \right\rangle_{\Gamma_C}  \\
& \qquad -\left\langle  \lambda^{kq}_t,u^{hp}_t \right\rangle_{\Gamma_C}
 +\left\langle \left\|\lambda^{kq}_t\right\|_2 , \left\|u^{hp}_t\right\|_2 \right\rangle_{\Gamma_C}
\end{align*}

\end{corollary}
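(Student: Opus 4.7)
The plan is to observe that the corollary is a direct specialization of Theorem~\ref{thm:aposteriori} obtained by exploiting pointwise sign/bound information on $\lambda^{kq}$. Since both $M_{kq}^+(\mathcal{F}) \subset M^+(\mathcal{F})$ (under the stated assumption on $\mathcal{F}$, by the partition--of--unity and non--negativity of Bernstein polynomials, as noted after the definition of $M_{kq}^+(\mathcal{F})$) and $\tilde{M}_{kq}^+(\mathcal{F}) \subset M^+(\mathcal{F})$ for $q=1$ (the Gauss--Lobatto points coincide with the endpoints of each $E\in\hat{\mathcal{T}}_k$, so the sign and bound conditions extend from the nodes to the whole element by linearity), both hypotheses imply $\lambda^{kq}\in M^+(\mathcal{F})$. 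In particular, almost everywhere on $\Gamma_C$ one has $\lambda^{kq}_n\ge 0$ and $\lVert\lambda^{kq}_t\rVert_2 \le \mathcal{F}$.

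The next step is to translate these pointwise properties into the positive/negative--part notation used in Theorem~\ref{thm:aposteriori}. Since $v = v^+ + v^-$ with $v^+=\max\{0,v\}$ and $v^-=\min\{0,v\}$, the inequality $\lambda^{kq}_n\ge 0$ yields $(\lambda^{kq}_n)^- = 0$ and $(\lambda^{kq}_n)^+ = \lambda^{kq}_n$. Similarly, $\lVert\lambda^{kq}_t\rVert_2 - \mathcal{F}\le 0$ gives $(\lVert\lambda^{kq}_t\rVert_2 - \mathcal{F})^+ = 0$ and $(\lVert\lambda^{kq}_t\rVert_2 - \mathcal{F})^- = \lVert\lambda^{kq}_t\rVert_2 - \mathcal{F}$.

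Plugging these identities into the right--hand side of Theorem~\ref{thm:aposteriori}, the two non--conformity terms
\begin{align*}
\left\|\left(\lambda^{kq}_n\right)^-\right\|^2_{-\frac{1}{2},\Gamma_C} \qquad\text{and}\qquad \left\| \left(\left\|\lambda^{kq}_t\right\|_2-\mathcal{F} \right)^+ \right\|^2_{-\frac{1}{2},\Gamma_C}
\end{align*}
vanish identically, the normal complementarity contribution collapses to $\langle \lambda^{kq}_n,(g-u^{hp}_n)^+\rangle_{\Gamma_C}$, and the tangential stick term simplifies to $-\langle \lVert\lambda^{kq}_t\rVert_2-\mathcal{F},\lVert u^{hp}_t\rVert_2\rangle_{\Gamma_C}$. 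All remaining residual, jump, gap and friction contributions on the right--hand side are unchanged, so the asserted estimate follows immediately. There is no real obstacle here: the only thing one must verify carefully is the conformity claim for $\tilde{M}_{kq}^+(\mathcal{F})$ with $q=1$, which reduces to the elementary observation that a linear function attaining nonnegative values (resp.~values in $[-\mathcal{F},\mathcal{F}]$) at both endpoints of $E$ inherits the same bounds throughout $E$.
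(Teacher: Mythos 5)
Your proposal is correct and is essentially the argument the paper leaves implicit (the corollary carries no explicit proof beyond the phrase ``i.e.~$\lambda^{kq}\in M^+(\mathcal{F})$''): both hypotheses give the pointwise bounds $\lambda^{kq}_n\ge 0$ and $\lVert\lambda^{kq}_t\rVert_2\le\mathcal{F}$ a.e.\ on $\Gamma_C$, whence $(\lambda^{kq}_n)^-=0$, $(\lambda^{kq}_n)^+=\lambda^{kq}_n$, $(\lVert\lambda^{kq}_t\rVert_2-\mathcal{F})^+=0$ and $(\lVert\lambda^{kq}_t\rVert_2-\mathcal{F})^-=\lVert\lambda^{kq}_t\rVert_2-\mathcal{F}$, and substituting these identities into Theorem~\ref{thm:aposteriori} yields the stated estimate. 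The only point worth making explicit is that the extension of the GLL endpoint conditions (and of the Bernstein coefficient conditions) to the whole element relies on $\mathcal{F}$ being linear, as the paper notes when stating that $M_{kq}^+(\mathcal{F})\subset M^+(\mathcal{F})$; you gesture at this with ``under the stated assumption on $\mathcal{F}$'' but it should also be invoked for the $q=1$ GLL case.
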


\section{Implementational challenges}
\label{sec:Implementation}

For the contact stabilized BEM \eqref{eq:DiscreteMixedProblem} two non-standard matrices must be implemented, namely $\left\langle \gamma \lambda^{kq}, S_{hp} v^{hp}\right\rangle_{\Gamma_C}$ and $\left\langle \gamma S_{hp}u^{hp}, S_{hp} v^{hp}\right\rangle_{\Gamma_C}$. To restrain from additional difficulties we use the same mesh for $\lambda^{hp}$ and $u^{hp}$ on $\Gamma_C$. Hence, the singularities of $S_{hp}v^{hp}$ for the outer quadrature coincide with the nodes of the mesh for $\lambda^{kq}$ and the standard outer quadrature technique for the BE-potentials can be applied. In the implementation we utilize the representations $S_{hp}v^{hp} = Wv^{hp}+ (K+\frac{1}{2})^\top \eta^{hp}$ where $\eta^{hp}= V_{hp}^{-1}(K+\frac{1}{2})v^{hp} \in \mathcal{V}_{hp}^D$ and $Wv = -\frac{d}{ds} V^* \frac{dv}{ds}$ where $V^*$ is the modified single layer potential \cite[p.~157]{steinbach2003numerische}. Hence, performing integration by parts elementwise yields
\begin{align*}
\left\langle \gamma \lambda^{kq}, S_{hp} v^{hp}\right\rangle_{\Gamma_C}&=\left\langle \gamma \lambda^{kq}, -\frac{d}{ds} V^* \frac{d}{ds}v^{hp} \right\rangle_{\Gamma_C} + \left\langle \gamma \lambda^{kq}, (K+\frac{1}{2})^\top \eta^{hp}\right\rangle_{\Gamma_C} \\
&= \gamma_0 \sum_{E \in \mathcal{T}_h \cap \Gamma_C}  \frac{h_E}{p_E^2} \left\langle \frac{d}{ds} \lambda^{kq}, V^* \frac{d }{ds}v^{hp}\right\rangle_E - \left[\lambda^{kq} V^* \frac{d}{ds}v^{hp}\right]_{\partial E} + \left\langle \gamma \lambda^{kq}, (K+\frac{1}{2})^\top \eta^{hp}\right\rangle_{\Gamma_C}
\end{align*}
where $\partial E$ are the two endpoints of the interval $E$. Each of these terms can now be computed by standard BEM techniques, e.g.~decomposition into farfield, nearfield and selfelement with the corresponding (hp-composite) Gauss-Quadrature for the outer integration and the analytic computation for the inner integration \cite{maischak1999analytical}. The algebraic representation of $\eta^{hp}$ for the computation of the standard Steklov-operator $\left\langle S_{hp}u^{hp},v^{hp}\right\rangle_{\Gamma_\Sigma}$ is reused, i.e.
$$ \vec{\lambda}^\top \mathbf{\tilde{S}} \vec{v}= \vec{\lambda}^\top \mathbf{\tilde{W}} \vec{v} + \vec{\lambda}^\top \left(\mathbf{\tilde{K}} + \frac{1}{2}\mathbf{\tilde{I}}\right)^\top\mathbf{V}^{-1}\left( \mathbf{K} + \frac{1}{2}\mathbf{I} \right) \vec{v}.$$
For the second matrix we obtain
\begin{gather*}
\left\langle \gamma S_{hp}u^{hp}, S_{hp} v^{hp}\right\rangle_{\Gamma_C} = \left\langle \gamma Wu^{hp}+ \gamma(K+\frac{1}{2})^\top \zeta^{hp} ,Wv^{hp}+ (K+\frac{1}{2})^\top \eta^{hp} \right\rangle_{\Gamma_C}\\
= \left\langle \gamma Wu^{hp},Wv^{hp} \right\rangle_{\Gamma_C} + \left\langle \gamma Wu^{hp},(K+\frac{1}{2})^\top \eta^{hp}  \right\rangle_{\Gamma_C}\\ + \left\langle \gamma(K+\frac{1}{2})^\top \zeta^{hp} ,Wv^{hp} \right\rangle_{\Gamma_C} + \left\langle \gamma(K+\frac{1}{2})^\top \zeta^{hp} ,(K+\frac{1}{2})^\top \eta^{hp}  \right\rangle_{\Gamma_C}
\end{gather*}
Here, elementwise integration by parts of the hypersingular integral operator yields no advantages, except for $\left\langle \gamma Wu^{hp},\eta^{hp}  \right\rangle_{\Gamma_C}$ and $\left\langle \gamma  \zeta^{hp} ,Wv^{hp} \right\rangle_{\Gamma_C}$, and, therefore, the tangential derivative is approximated by a central finite difference quotient with a step length of $10^{-4}$ on the reference interval. This yields the matrix representation
\begin{align*}
\vec{v}^\top \mathbf{\hat{S}} \vec{u}=& \:\vec{v}^\top \mathbf{\widehat{WW}} \vec{u}
+\vec{v}^\top \left( \mathbf{K} + \frac{1}{2}\mathbf{I} \right)^\top  \mathbf{V}^{-\top}\left(\mathbf{\widehat{WK^\top}} + \frac{1}{2} \mathbf{\widehat{WI} }\right) \vec{u}
+\vec{v}^\top \left(\mathbf{\widehat{WK^\top}} + \frac{1}{2} \mathbf{\widehat{WI} }\right)^\top \mathbf{V}^{-1}\left( \mathbf{K} + \frac{1}{2}\mathbf{I} \right) \vec{u}\\
&+\vec{v}^\top\left( \mathbf{K} + \frac{1}{2}\mathbf{I} \right)^\top  \mathbf{V}^{-\top} \left( \mathbf{\widehat{K^\top K^\top}} + \frac{1}{2}\mathbf{\widehat{K^\top I}} + \frac{1}{2}\mathbf{\widehat{K^\top I}}^\top + \frac{1}{4}\mathbf{\widehat{II}}  \right) \mathbf{V}^{-1}\left( \mathbf{K} + \frac{1}{2}\mathbf{I} \right)\vec{u}
\end{align*}
Most of the computational time is required for the matrices $\mathbf{\widehat{WW}}$, $\mathbf{\widehat{K^\top K^\top}}$ and $\mathbf{\widehat{WK^\top}}$. Hence, their symmetry and other optimization possibilities should be exploited thoroughly.

%
and

 \section{Modifications for Coulomb friction}
 \label{sec:CoulombFriction}

 Tresca friction often yields unphysical behavior, namely non-zero tangential traction and stick-slip transition outside the actual contact zone. Therefore, in many applications the more realistic Coulomb friction is applied, in which the friction threshold $\mathcal{F}$ is replaced by $\mathcal{F}|\sigma_n(u)|$. In the here presented discretization only the Lagrangian multiplier set must be adapted, namely
 \begin{align}
 M^+(\mathcal{F}\lambda_n)&:=\left\lbrace \mu \in \tilde{H}^{-1/2}(\Gamma_C): \left\langle \mu,v\right\rangle_{\Gamma_C} \leq \left\langle \mathcal{F}\lambda_n,\left\|v_t\right\| \right\rangle_{\Gamma_C} \forall v \in \tilde{H}^{1/2}(\Gamma_\Sigma), v_n \leq 0 \right\rbrace \label{eq:CoulombModLagrangeSet} \\
M_{kq}^+(\mathcal{F}\lambda^{kq}_n)&:= \left\{ \mu^{kq} \in L^2(\Gamma_C): \mu^{kq}|_E = \sum_{i=0}^{q_E} \mu_i^E B_{i,q_E}^{E} \in \left[\mathbb{P}_{q_E}(E)\right]^2,\ (\mu^E_i)_n \geq 0, \ \left|(\mu_i^E)_t\right| \leq \mathcal{F}(\Psi_E(iq_E^{-1}))(\lambda_i^E)_n \right\} \\
\tilde{M}_{kq}^+(\mathcal{F}\lambda^{kq}_n)&:= \left\{ \mu^{kq} \in L^2(\Gamma_C): \mu^{kq}|_E \in \left[\mathbb{P}_{q_E}(E)\right]^2,\ \mu^{kq}_n \geq 0, \ -\mathcal{F}\lambda^{kq}_n \leq \mu^{kq}_t \leq \mathcal{F}\lambda^{kq}_n \text{ in } G_{kq} \right\}
 \end{align}
 A standard iterative solver technique for Coulomb friction is to solve a sequence of Tresca frictional problems in which the friction threshold $\mathcal{F}\lambda_n$ of the current Tresca subproblem is obtained from the previous iterative Tresca solution. Since that solution is updated in the next Tresca iteration anyway we solve the subproblem inexactly by a single semi-smooth Newton step.\\

 \begin{theorem} \label{thm:errorEst_ContactSplit_Coulomb}
 Let $(u,\lambda)$, $(u^{hp},\lambda^{kq})$ be the solution of \eqref{eq:MixedProblem}, \eqref{eq:DiscreteMixedProblem} respectively,  with the Lagrange multiplier sets modified according to Coulomb friction. Under the assumption that $\lambda_t = \mathcal{F} \lambda_n \xi$,  $\xi \in Dir_t(u_t)$ where $Dir_t(u_t)$ is the subdifferential of the convex map $u_t \mapsto \left|u_t\right|$ (see \cite{hild2009residual}), $\mathcal{F} \geq 0$ constant and $\mathcal{F} \left\|\xi\right\|$ sufficiently small, there holds
 \begin{align*}
 &C\left(\left\|u-u^{hp}\right\|^2_{1/2,\Gamma_\Sigma} + \left\|\psi-\psi^{hp} \right\|^2_{-1/2,\Gamma_\Sigma} + \left\|\lambda^{kq} -\lambda \right\|^2_{-\frac{1}{2},\Gamma_C}  \right)\\
&  \leq
  \sum_{E\in \mathcal{T}_h \cap \Gamma_N} \left(\frac{h_E}{p_E}\right) \left\|  t-S_{hp}u^{hp}\right\|^2_{0,E} +  \sum_{E\in \mathcal{T}_h \cap \Gamma_C} \left(\frac{h_E}{p_E} + \frac{h_{E}^{2\alpha-1}}{p_{E}^{2\beta-2}}  \right)\left\|   -\lambda^{kq}-S_{hp}u^{hp}\right\|^2_{0,E}
 \\
 &+ \sum_{E \in \mathcal{T}_{h}}h_E \left\|\frac{\partial}{\partial s} \left(V\psi^{hp} -(K+\frac{1}{2})u^{hp}\right) \right\|^2_{L^2(E)}
 + \left\| (\lambda_{hp})_n^- \right\|^2_{-1/2,\Gamma_C}
 +\left\| \left( |(\lambda_{hp})_t|-\mathcal{F}(\lambda_{hp})_n^+ \right)^+ \right\|_{-1/2,\Gamma_C}  \\
 & - \left\langle \left( |(\lambda_{hp})_t|-\mathcal{F}(\lambda_{hp})_n^+ \right)^-  ,  \left|(u_{hp})_t\right|\right\rangle_{\Gamma_C}  + \left\langle \left|(\lambda_{hp})_t\right|, \left|(u_{hp})_t\right| \right\rangle_{\Gamma_C}  - \left\langle (\lambda_{hp})_t, (u_{hp})_t \right\rangle_{\Gamma_C}  \\
 &\left.+\left\| (g-(u_{hp})_n)^- \right\|_{1/2,\Gamma_C}
 +\left\langle \left(\lambda_{hp}\right)^+_n,\left(g-\left(u_{hp}\right)_n\right)^+\right\rangle_{\Gamma_C} \right)
 \end{align*}
 \end{theorem}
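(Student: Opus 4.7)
My plan is to mimic the Tresca proof of Theorem~\ref{thm:aposteriori}, keeping the bulk boundary--operator part intact and rewriting only the contact--friction pairing to reflect the $\lambda_n$-dependent friction bound. Lemmas~\ref{thm:aposterioriError_only_u} and \ref{lem:aposteriori_lambda} transfer verbatim because their derivations rely only on \eqref{eq:WeakMixedVarEq}, \eqref{eq:DiscreteMixedProblem}, Lemma~\ref{lem:galerkinOrtho}, the inverse estimate Theorem~\ref{thm:inverseEstimate}, the Clement interpolation, and the continuous inf-sup condition~\eqref{eq:lame:infsup}---none of which use the concrete structure of $M^+(\cdot)$. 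This leaves the contact pairing $\langle\lambda-\lambda^{kq},u^{hp}-u\rangle_{\Gamma_C}$ as the only quantity needing a Coulomb-specific treatment; once it is bounded, the assertion follows by combining the two lemmas as in Theorem~\ref{thm:aposteriori} and choosing $\epsilon$ small in Young's inequality.

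For the normal component I would replay Lemma~\ref{lem:aposterioriContactPart} essentially unchanged, since $\langle\lambda_n,u_n-g\rangle_{\Gamma_C}=0$, $u_n\leq g$ and $\lambda_n\geq0$ still hold, producing $\langle(\lambda_n^{kq})^+,(g-u_n^{hp})^+\rangle_{\Gamma_C}$, $\|(g-u_n^{hp})^-\|_{1/2,\Gamma_C}^2$ and $\|(\lambda_n^{kq})^-\|_{-1/2,\Gamma_C}^2$. For the tangential component the friction law becomes $\langle\lambda_t,u_t\rangle_{\Gamma_C}=\langle\mathcal{F}\lambda_n,\|u_t\|\rangle_{\Gamma_C}$ and $\langle\lambda_t,u_t^{hp}\rangle_{\Gamma_C}\leq\langle\mathcal{F}\lambda_n,\|u_t^{hp}\|\rangle_{\Gamma_C}$ (pointwise domination from $\|\lambda_t\|\leq\mathcal{F}\lambda_n$), so repeating the Tresca manipulation with the decomposition $\lambda_n^{kq}=(\lambda_n^{kq})^++(\lambda_n^{kq})^-$ yields the Coulomb estimators $\|(|\lambda_t^{kq}|-\mathcal{F}(\lambda_n^{kq})^+)^+\|_{-1/2,\Gamma_C}$, $-\langle(|\lambda_t^{kq}|-\mathcal{F}(\lambda_n^{kq})^+)^-,|u_t^{hp}|\rangle_{\Gamma_C}$ and $\langle|\lambda_t^{kq}|,|u_t^{hp}|\rangle_{\Gamma_C}-\langle\lambda_t^{kq},u_t^{hp}\rangle_{\Gamma_C}$ as written in the statement.

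The main obstacle is the new cross term $\langle\mathcal{F}(\lambda_n-\lambda_n^{kq}),\|u_t\|-\|u_t^{hp}\|\rangle_{\Gamma_C}$, which is absent in Tresca since there the friction bound is the fixed function $\mathcal{F}$. Here I would exploit the structural hypothesis $\lambda_t=\mathcal{F}\lambda_n\xi$ with $\xi\in\mathrm{Dir}_t(u_t)$ to rewrite this contribution as $\mathcal{F}\langle(\lambda_n-\lambda_n^{kq})\xi,u_t-u_t^{hp}\rangle_{\Gamma_C}$ and bound it by Cauchy--Schwarz against $\|\lambda_n-\lambda_n^{kq}\|_{-1/2,\Gamma_C}\|u-u^{hp}\|_{1/2,\Gamma_\Sigma}$ with constant proportional to $\mathcal{F}\|\xi\|$. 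Under the smallness assumption on $\mathcal{F}\|\xi\|$ this quantity is absorbed, after Young's inequality, into the left-hand side $C\bigl(\|u-u^{hp}\|_{1/2,\Gamma_\Sigma}^2+\|\lambda-\lambda^{kq}\|_{-1/2,\Gamma_C}^2\bigr)$ controlled by the two bulk lemmas---the very same absorbing-smallness mechanism that underlies existence and uniqueness for Coulomb friction. A final choice of $\epsilon$ sufficiently small then yields the asserted a posteriori estimate.
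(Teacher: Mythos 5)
Your structural plan---re-use Lemmas~\ref{thm:aposterioriError_only_u} and~\ref{lem:aposteriori_lambda} unchanged, keep the normal-component analysis of Lemma~\ref{lem:aposterioriContactPart}, and absorb an $\mathcal{F}\|\xi\|$-proportional cross term into the left-hand side---is exactly the mechanism the paper uses, and your list of resulting estimator contributions is correct. The gap is in the algebraic path you take to the cross term. You first run the Tresca argument with $\mathcal{F}\lambda_n$ as friction bound, which produces
$\left\langle \mathcal{F}(\lambda_n-\lambda_n^{kq}),\,\|u_t\|-\|u_t^{hp}\|\right\rangle_{\Gamma_C}$,
and then claim this can be rewritten as $\mathcal{F}\langle(\lambda_n-\lambda_n^{kq})\xi,\,u_t-u_t^{hp}\rangle_{\Gamma_C}$ using $\lambda_t=\mathcal{F}\lambda_n\xi$. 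That rewrite is not an equality: since $\xi\in\mathrm{Dir}_t(u_t)$, one has $\xi\cdot u_t=\|u_t\|$ but only $\xi\cdot u_t^{hp}\leq\|u_t^{hp}\|$, so the two expressions differ by $\mathcal{F}\langle\lambda_n-\lambda_n^{kq},\,\xi u_t^{hp}-\|u_t^{hp}\|\rangle_{\Gamma_C}$, which has no sign you can exploit because $\lambda_n-\lambda_n^{kq}$ is uncontrolled. And even if you tried to bound your cross term directly by Cauchy--Schwarz, you would face $\bigl\|\,\|u_t\|-\|u_t^{hp}\|\,\bigr\|_{1/2,\Gamma_C}$, which is \emph{not} controlled by $\|u-u^{hp}\|_{1/2,\Gamma_\Sigma}$: the map $v\mapsto|v|$ is Lipschitz in $L^2$ and bounded in $H^1$, but it is not Lipschitz on the fractional space $H^{1/2}$.

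The paper avoids both difficulties by never passing through the difference of absolute values. It applies the hypothesis $\lambda_t=\mathcal{F}\lambda_n\xi$ at the outset to split
\begin{align*}
\left\langle \lambda_t-\lambda_t^{kq},\,u_t^{hp}-u_t\right\rangle_{\Gamma_C}
= \left\langle \lambda_t^{kq}-\mathcal{F}\xi\lambda_n^{kq},\,u_t-u_t^{hp}\right\rangle_{\Gamma_C}
+ \left\langle \mathcal{F}\xi\bigl(\lambda_n^{kq}-\lambda_n\bigr),\,u_t-u_t^{hp}\right\rangle_{\Gamma_C},
\end{align*}
so the cross term pairs $u_t-u_t^{hp}$ itself, giving immediately $\mathcal{F}\|\xi\|\,\|u-u^{hp}\|_{1/2,\Gamma_\Sigma}\|\lambda-\lambda^{kq}\|_{-1/2,\Gamma_C}$, absorbable for $\mathcal{F}\|\xi\|$ small. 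The remaining term $\langle\lambda_t^{kq}-\mathcal{F}\xi\lambda_n^{kq},u_t-u_t^{hp}\rangle_{\Gamma_C}$ is then estimated as in \cite[Eq.~27]{hild2009residual} using $\xi\cdot u_t=\|u_t\|$ and the positive-/negative-part decomposition of $\lambda_n^{kq}$, producing exactly the tangential estimators you listed. You should restart the tangential part from this splitting rather than patching the Tresca derivation.
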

 \begin{proof}
 The same arguments as for Theorem~\ref{thm:aposteriori} apply, only the estimate of the tangential component in Lemma~\ref{lem:aposterioriContactPart} changes, \cite{hild2009residual,Banz2014BEM}. From $\lambda_t = \mathcal{F} \lambda_n \xi$ follows
 \begin{align*}
 \left\langle \lambda^t-\lambda^{hp}_t,u^{hp}_t-u_t\right\rangle_{\Gamma_C} &=
 \left\langle \lambda^{hp}_t - \mathcal{F} \xi  \left(\lambda_{hp}\right)_n,u_t - \left(u_{hp}\right)_t\right\rangle_{\Gamma_C}
 + \left\langle \mathcal{F} \xi \left(\left(\lambda_{hp}\right)_n - \lambda_n \right),u_t-\left(u_{hp}\right)_t\right\rangle_{\Gamma_C} \\
 &\leq \left\langle \left(\lambda_{hp}\right)_t - \mathcal{F} \xi  \left(\lambda_{hp}\right)_n,u_t - \left(u_{hp}\right)_t\right\rangle_{\Gamma_C} + \mathcal{F} \left\|\xi\right\| \left\|u-u_{hp}\right\|_{1/2,\Gamma_\Sigma}\left\|\lambda-\lambda_{hp}\right\|_{-1/2,\Gamma_C}
 \end{align*}
 For the other term there holds, similarly to \cite[Eq.~27]{hild2009residual},
 \begin{align*}
& \left\langle \left(\lambda^{hp}\right)_t - \mathcal{F} \xi  \left(\lambda_{hp}\right)_n,u_t - \left(u^{hp}\right)_t\right\rangle_{\Gamma_C} \\&\leq  \left\|u-u_{hp}\right\|_{1/2,\Gamma_\Sigma} \left\{ \left\| \left( |(\lambda_{hp})_t|-\mathcal{F}(\lambda^{hp})_n^+ \right)^+ \right\|_{-1/2,\Gamma_C} + \mathcal{F} \left\| (\lambda^{hp})_n^-\right\|_{-1/2,\Gamma_C}   \right\}\\
 &- \left\langle \left( |(\lambda^{hp})_t|-\mathcal{F}(\lambda_{hp})_n^+ \right)^-  ,  \left|(u_{hp})_t\right|\right\rangle_{\Gamma_C} + \left\langle \left|(\lambda_{hp})_t\right|, \left|(u_{hp})_t\right| \right\rangle_{\Gamma_C} - \left\langle (\lambda_{hp})_t, (u_{hp})_t \right\rangle_{\Gamma_C}.
 \end{align*}
 This gives the assertion if $\alpha-\epsilon_1-\epsilon_2-\frac{C}{\beta}\mathcal{F}\left\|\xi\right\|(1+\epsilon_3)>0$, i.e.~$\mathcal{F}\left\|\xi\right\|$ is sufficiently small.

 \end{proof}

 \section{Numerical experiments}
 \label{sec:NumericalExperiments}

The following numerical experiments are carried out on an Intel Xeon compute-server with the software package of Lothar Banz. We choose $\gamma_0=10^{-3}$, and use the adaptivity algorithm as in \cite[Alg.~12]{Banz2014BEM} with D\"orfler marking parameter $\theta=0.3$ and analyticity estimation parameter $\delta=0.5$. The discrete problems are solved with a semi-smooth Newton method for which the constraint \eqref{eq:DiscreteContactConstraints} is written as two projection equations, one in the normal and one in the tangential component.

 \subsection{Mixed boundary value problem with linear Tresca-friction threshold}
 For the following numerical experiments, the domain is $\Omega=[-\frac{1}{2},\frac{1}{2}]^2$ with $\Gamma_C=[-\frac{1}{2},\frac{1}{2}]\times \left\{-\frac{1}{2}\right\}$, $\Gamma_D=[\frac{1}{4},\frac{1}{2}]\times \left\{\frac{1}{2}\right\}$ and $\Gamma_N=\partial \Omega \setminus \left(\Gamma_C \cup \Gamma_D\right)$. The material parameters are $E=500$ and $\nu=0.3$, the gap function is $g=1-\sqrt{1-\frac{x_1^2}{100}}$ and the Tresca friction function is $\mathcal{F}=0.211+0.412x_1$. The Neumann force is
 \begin{alignat*}{2}
 t_{\text{left}}&=  \left(
 \begin{array}{c}
 	 -(\frac{1}{2}-x_2)(-\frac{1}{2}-x_2)\\
 	0
 \end{array}\right) , &\qquad \text{on } &\left\{-\frac{1}{2}\right\}\times \left[-\frac{1}{2},\frac{1}{2}\right] \\
 t_{\text{top}}&=  \left(
 \begin{array}{c}
 	0\\
 	20(-\frac{1}{2}-x_1)(-\frac{1}{4}-x_1)
 \end{array}\right) , &\qquad \text{on } &\left[-\frac{1}{2},-\frac{1}{4}\right]\times \left\{\frac{1}{2}\right\}
 \end{alignat*}
 and zero elsewhere.  An example with similar obstacle and friction function is considered in \cite{schroder2012posteriori} for FEM and in \cite{Banz2014BEM} for BEM with biorthogonal basis functions. The solution is characterized by two singular points at the interface from Neumann to Dirichlet boundary condition. These singularities are more sever than the loss of regularity from the contact conditions. At the contact boundary the solution has a long interval in which it is sliding and in which the absolute value of the tangential Lagrangian multiplier increases linearly like $\mathcal{F}$. The actual contact set is slightly to the right of the middle of $\Gamma_C$.

 Figure~\ref{fig:Tresca:errorDiri} shows the reduction of the error indicator for different families of discrete solutions. The residual error indicator for the uniform $h$-version with $p=1$ has a convergence rate of $0.25$ which is the same as in the non-stabilized case with biorthogonal basis function presented in \cite{Banz2014BEM}. Here, the residual contribution of the residual error indicator is divided by ten to compensate for the residual estimator's typical large reliability constant. This factor is purely heuristic and based on a comparison of the residual and bubble error estimator for contact problems with biorthogonal basis functions in \cite{Banz2014BEM}. Employing an $h$-adaptive scheme improves the convergence to $2.5$, which later on deteriorates, by local refinements at the Neumann to Dirichlet interface and by locally resolving the contact interface as well as the first kink in $\lambda_t$. If both, $h$- and $p$-refinements are carried out, the convergence rate is only slightly further
improved to $2.6$. This is a very different behavior to the non-stabilized case with biorthogonal basis functions. There $h$-adaptivity has a convergence rate of $1.3$ and $hp$-adaptivity of $2.4$ and a significant fraction of the adaptive refinements is carried out on the contact boundary $\Gamma_C$. In fact, the $h$-adaptive scheme there shows an almost uniform mesh refinement on a large part of $\Gamma_C$ which is not the case here, Figure~\ref{fig:Tresca:AdaptiveMeshes} (a). The reason for that might be that the residual of the variational equation is the dominant contribution of the error indicator, Figure~\ref{fig:Tresca:errorContributions}. On the contact boundary, this is $\lambda^{kq}+S_{hp}u^{hp}$. However, the employed stabilization tries to achieve that $\lambda^{kq}+S_{hp}u^{hp}=0$ for each discrete solution. Hence, the estimated error on $\Gamma_C$ is correctly small and no local refinements are needed there.\\
Noting that the Bernstein basis functions (and constraints) are the same as for Gauss-Lobatto-Lagrange (GLL) if $p=1$, it is clear that the error estimation does not differ between these two approach for both the uniform and the $h$-adaptive scheme, Figure~\ref{fig:Tresca:errorDiri}. When looking at the $hp$-refined meshes for these two approaches, Figure~\ref{fig:Tresca:AdaptiveMeshes} (b)-(c), it becomes clear why the difference in the error estimates is that small. Nevertheless, in the GLL approach the consistency error in $\lambda_n$ is non zero, Figure~\ref{fig:Tresca:errorContributions}(c), contrary to the conforming Bernstein polynomial case, Figure~\ref{fig:Tresca:errorContributions}(b). In both cases the consistency error in $\lambda_t$ is on machine precision and is therefore omitted in the plots.

 \begin{figure}[tbp]
   \centering
   \includegraphics[trim = 22mm 4mm 25mm 10mm, clip,width=100.0mm, keepaspectratio]{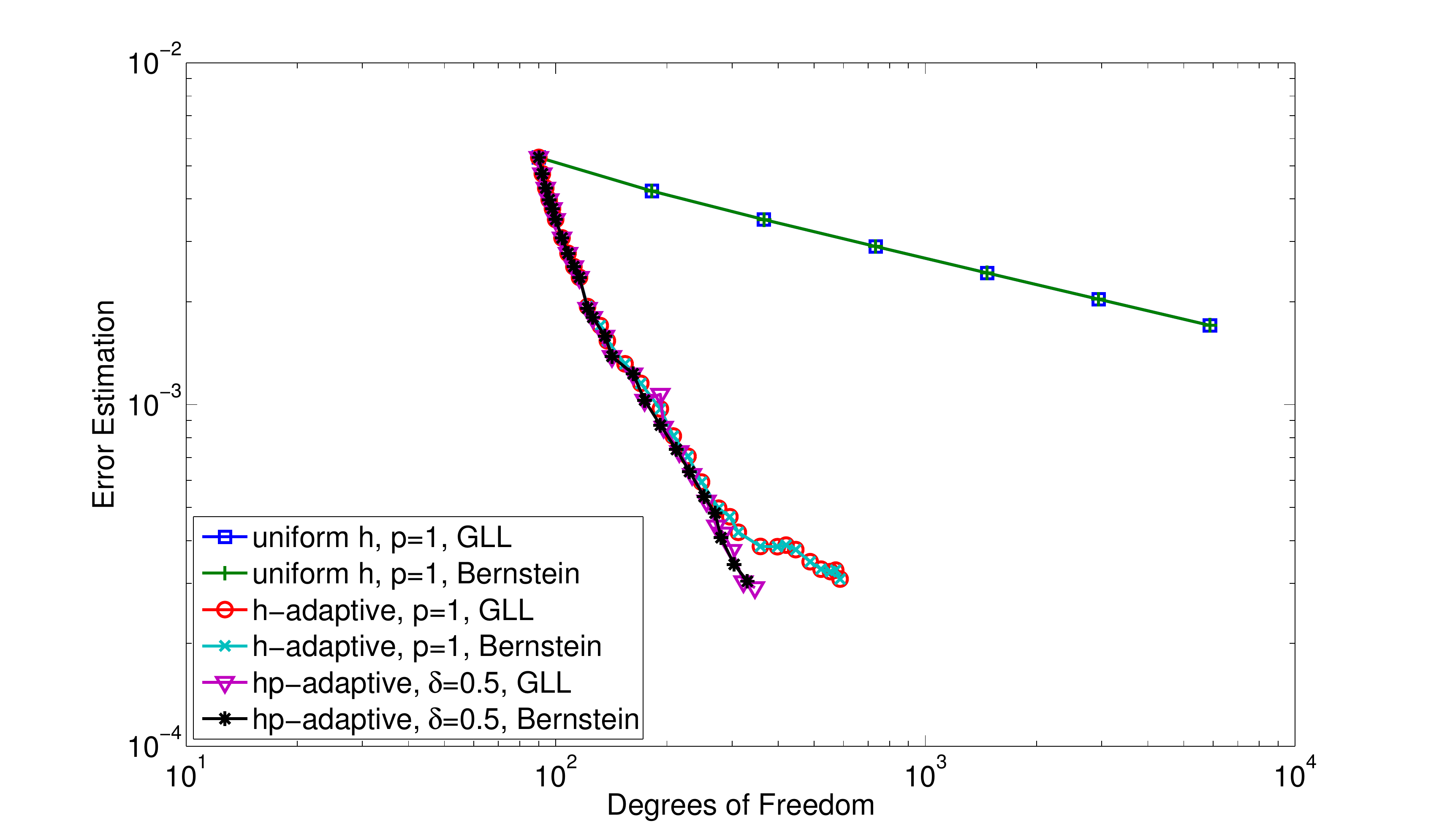}
   \caption{Error estimations for different families of discrete solutions (Tresca-friction)}
   \label{fig:Tresca:errorDiri}
 \end{figure}

 \begin{figure}
   \centering   \mbox{\subfigure[$h$-adaptive, mesh nr.~16 (inner), nr.~24 (outer)]{
   \begin{overpic}[trim = 41mm 31mm 35mm 7mm, clip,width=60.0mm, keepaspectratio]{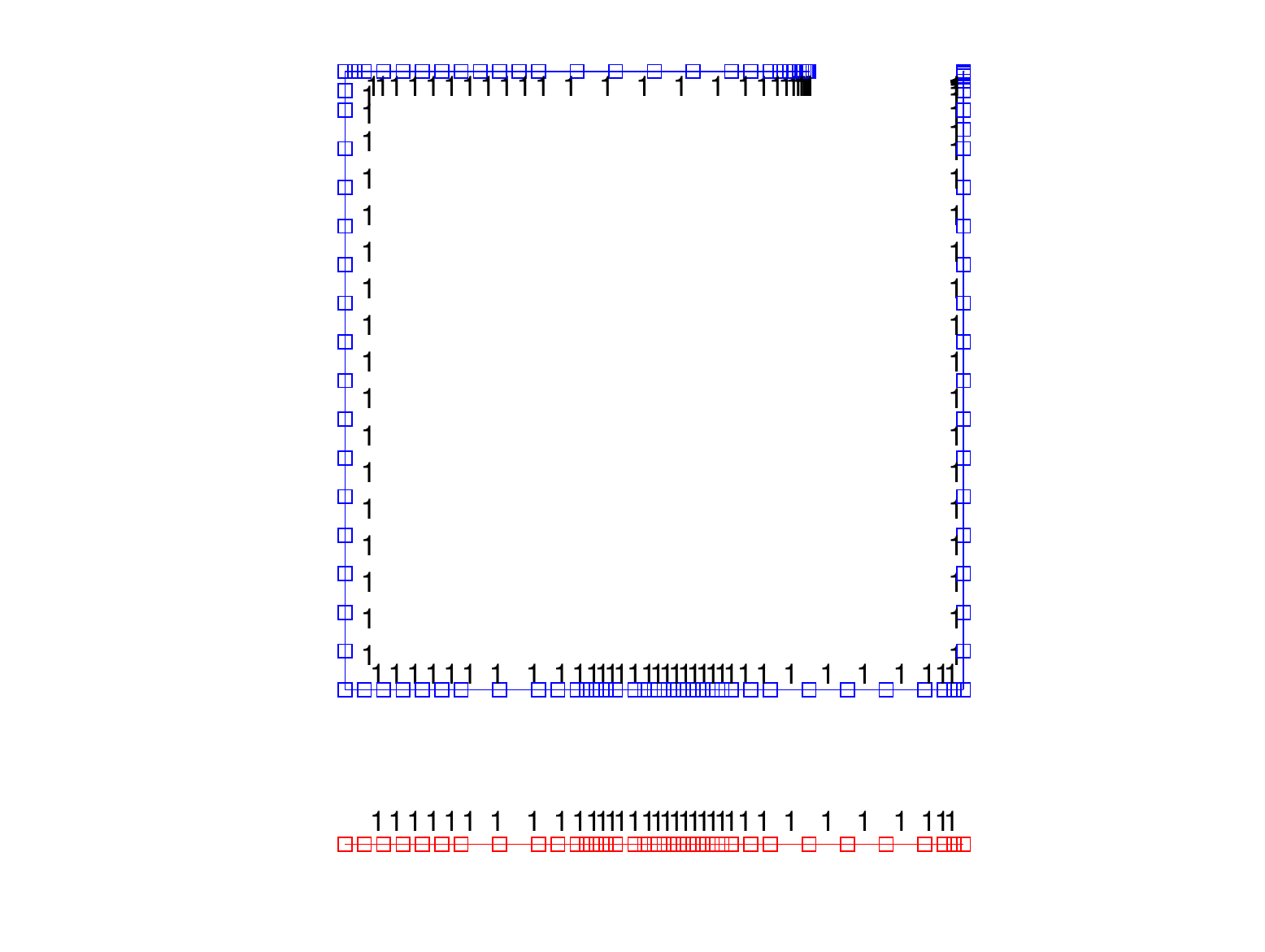}
     \put(7.1,8.4){ \includegraphics[trim = 41mm 31mm 35mm 7mm, clip, width=50.0mm, keepaspectratio]{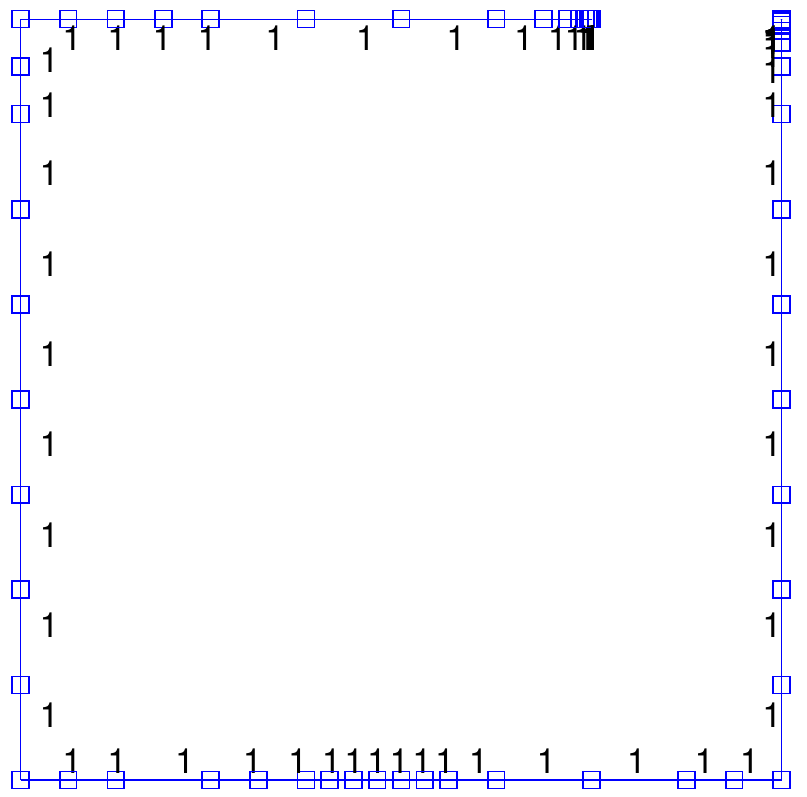}}
   \end{overpic} } \hspace{+1.5cm}
 	\subfigure[$hp$-adap.~(Bernstein), mesh nr.~16 (in), nr.~24 (out)]{
   \begin{overpic}[trim = 41mm 31mm 35mm 7mm, clip,width=60.0mm, keepaspectratio]{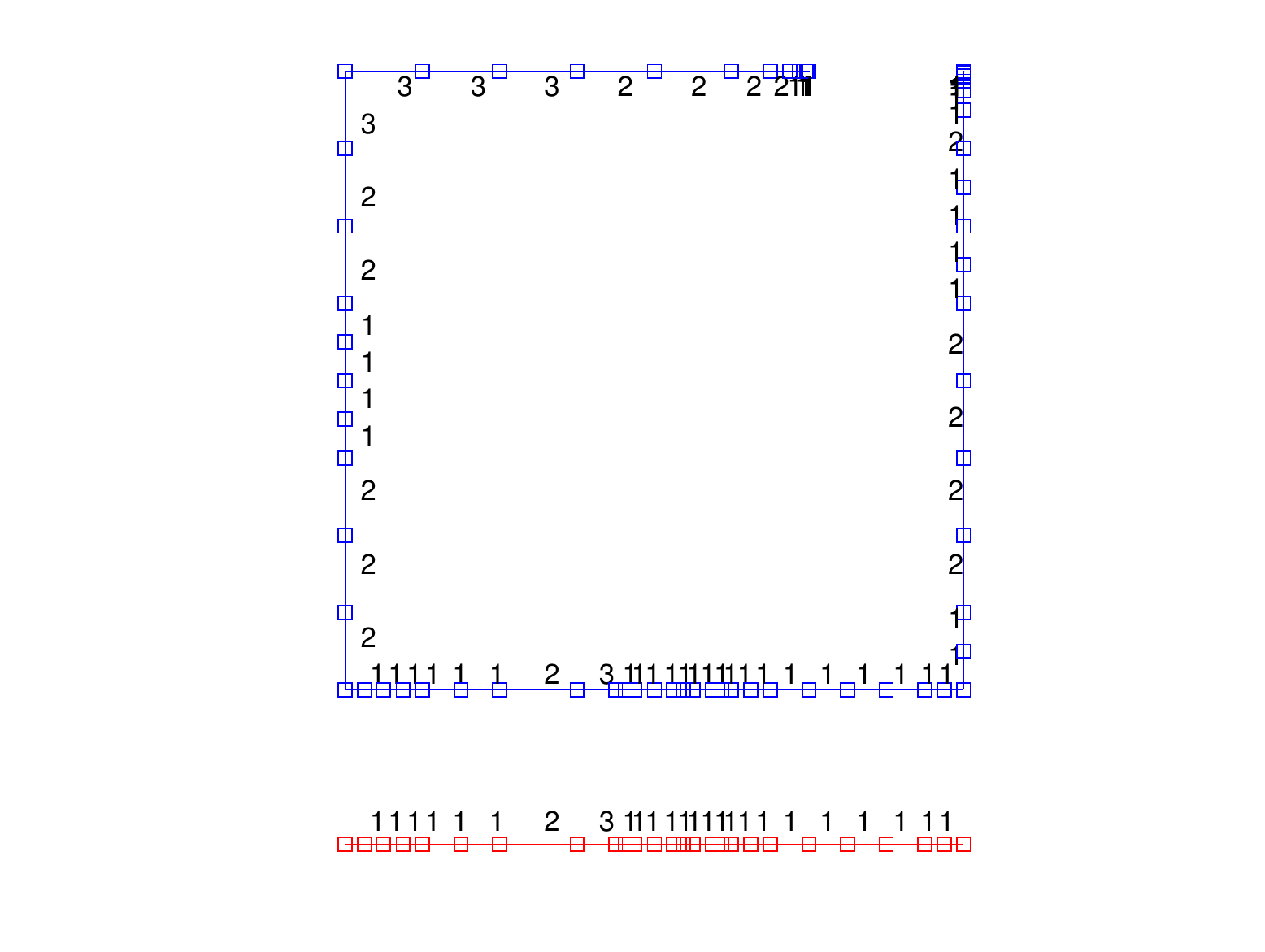}
     \put(7.1,8.4){ \includegraphics[trim = 41mm 31mm 35mm 7mm, clip, width=50.0mm, keepaspectratio]{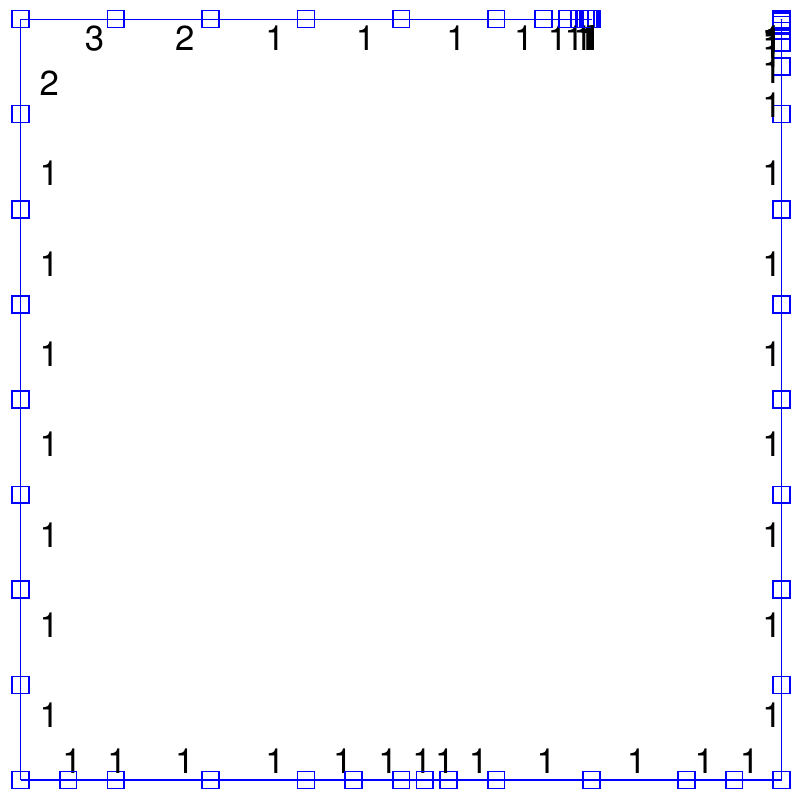}}
   \end{overpic}	} }
	
 	\subfigure[$hp$-adap.~(GLL), mesh nr.~16 (inner), nr.~24 (outer)]{
   \begin{overpic}[trim = 41mm 31mm 35mm 7mm, clip,width=60.0mm, keepaspectratio]{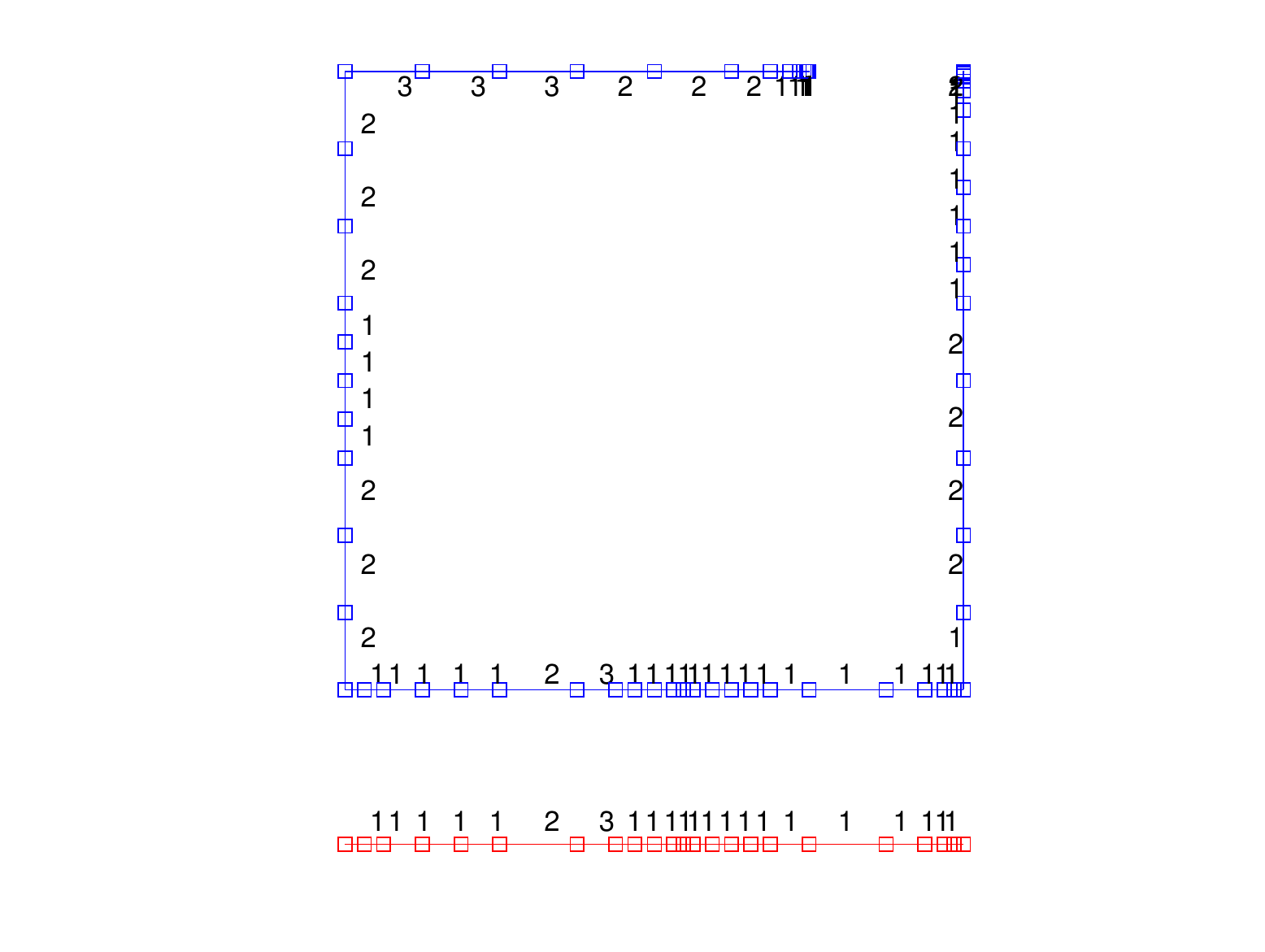}
     \put(6.9,8.7){ \includegraphics[trim = 41mm 31mm 35mm 7mm, clip, width=50.0mm, keepaspectratio]{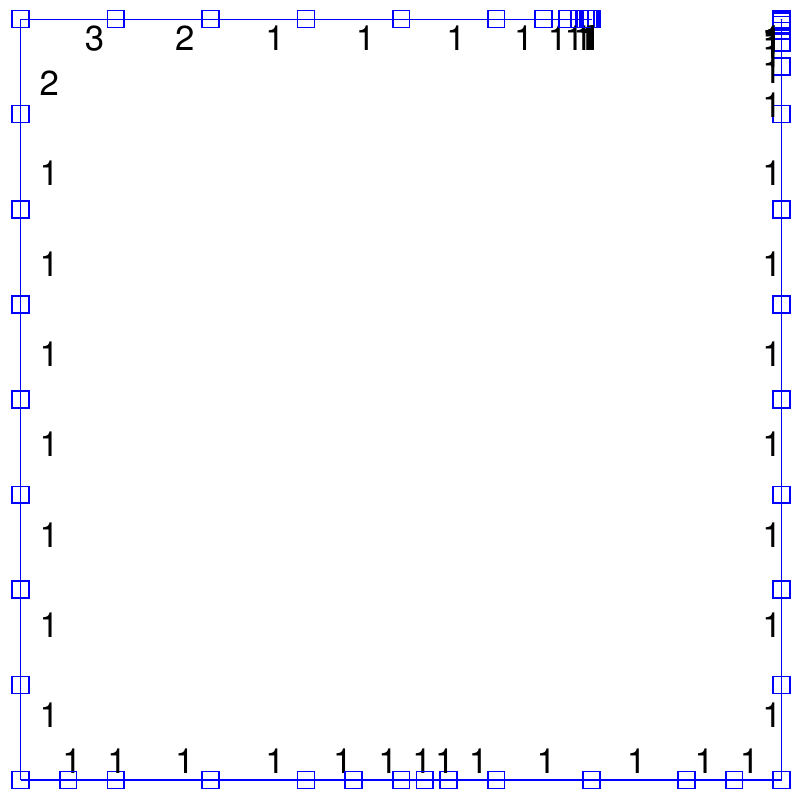}}
   \end{overpic}	}
 \caption{Adaptively generated meshes (Tresca-friction)}
 \label{fig:Tresca:AdaptiveMeshes}
 \end{figure}

 \begin{figure}[tbp]
   \centering \mbox{
   \subfigure[uniform $h$-version with $p=1$ (Bernstein/GLL)]{
 	\includegraphics[trim = 30mm 8mm 25mm 10mm, clip,width=70.0mm, keepaspectratio]{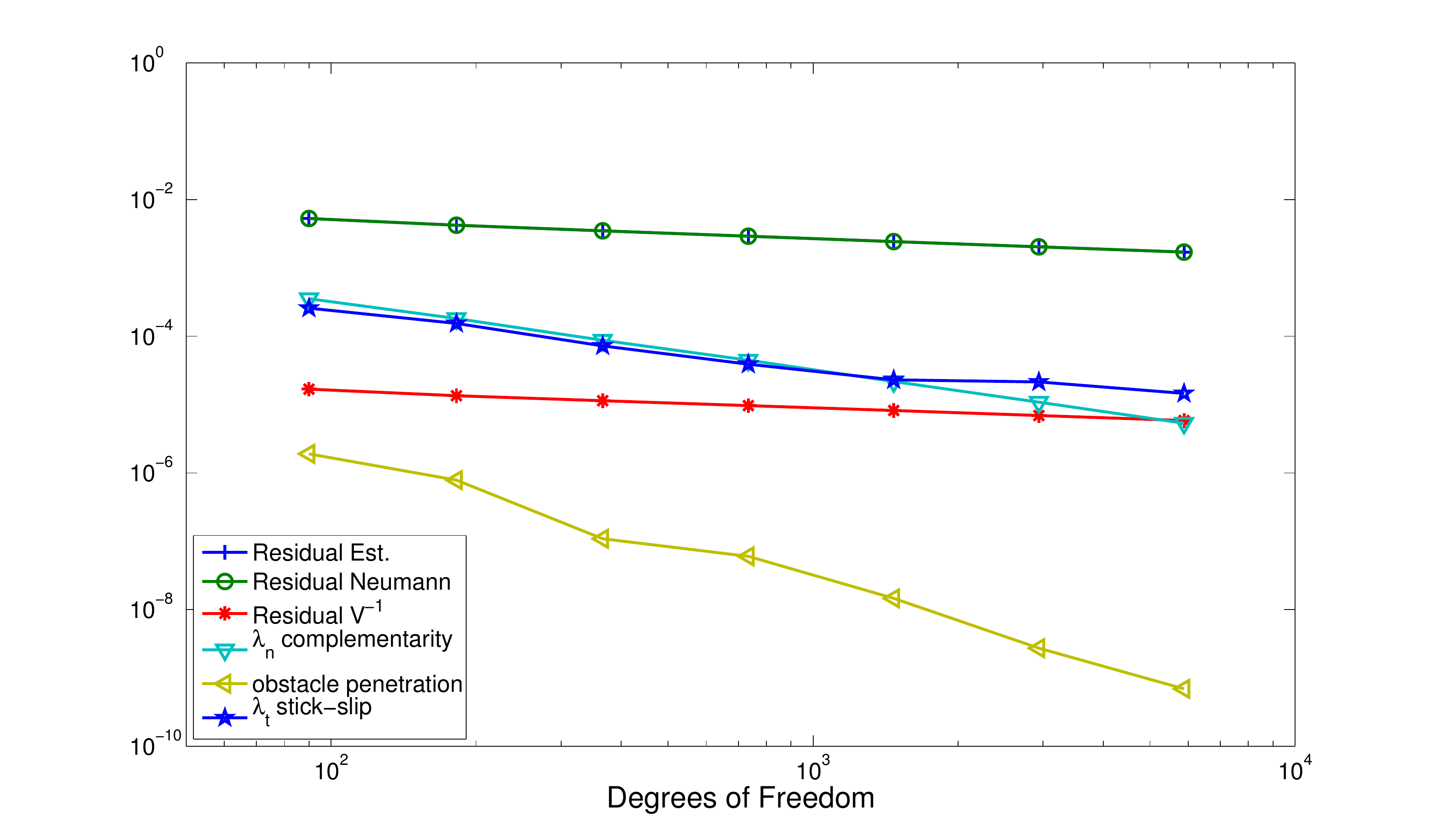}}
   \subfigure[$hp$-adaptive (Bernstein)]{
 	\includegraphics[trim = 30mm 8mm 25mm 10mm, clip,width=70.0mm, keepaspectratio]{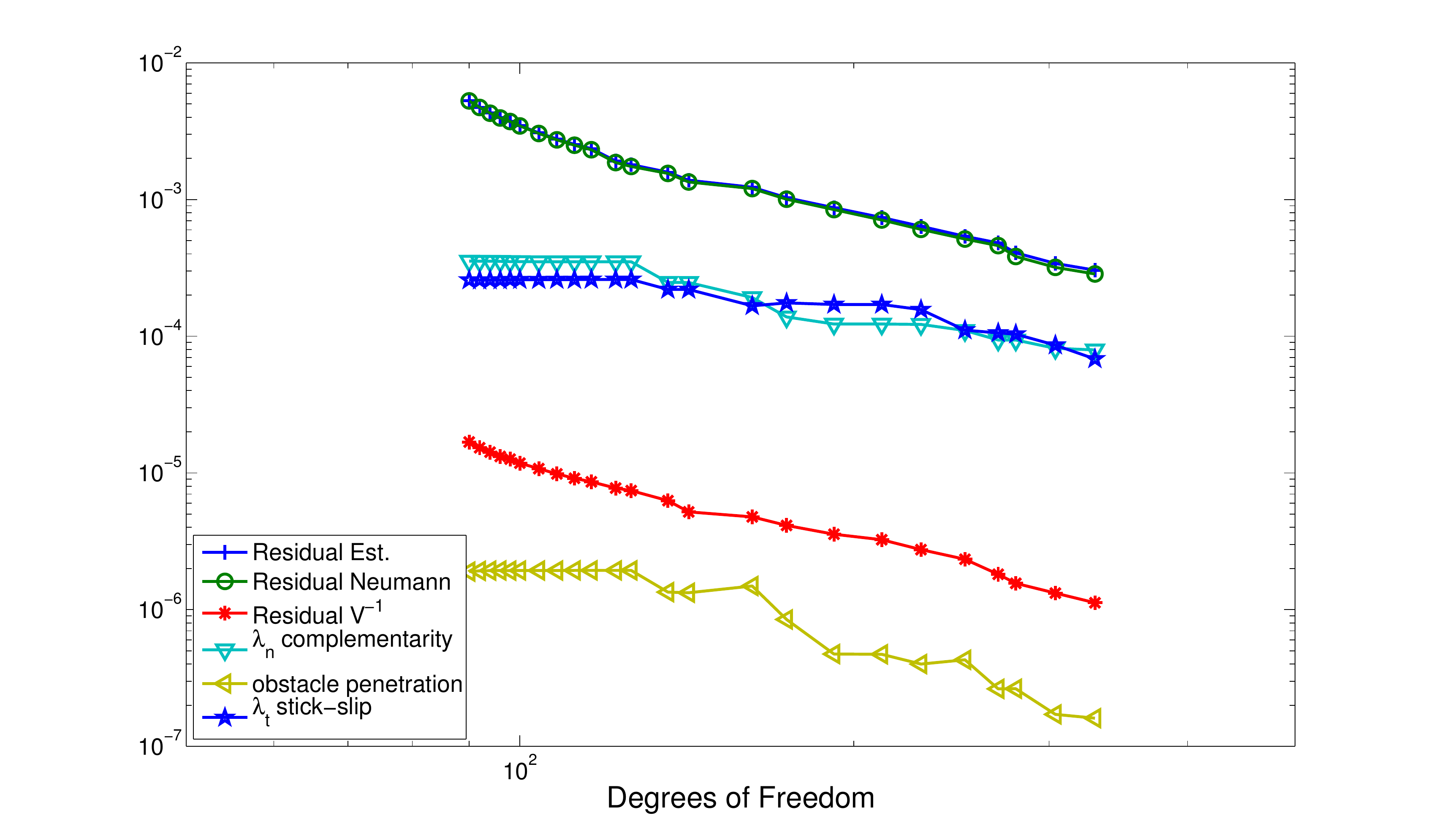}} }\\
	   \subfigure[$hp$-adaptive (GLL)]{
 	\includegraphics[trim = 30mm 8mm 25mm 10mm, clip,width=70.0mm, keepaspectratio]{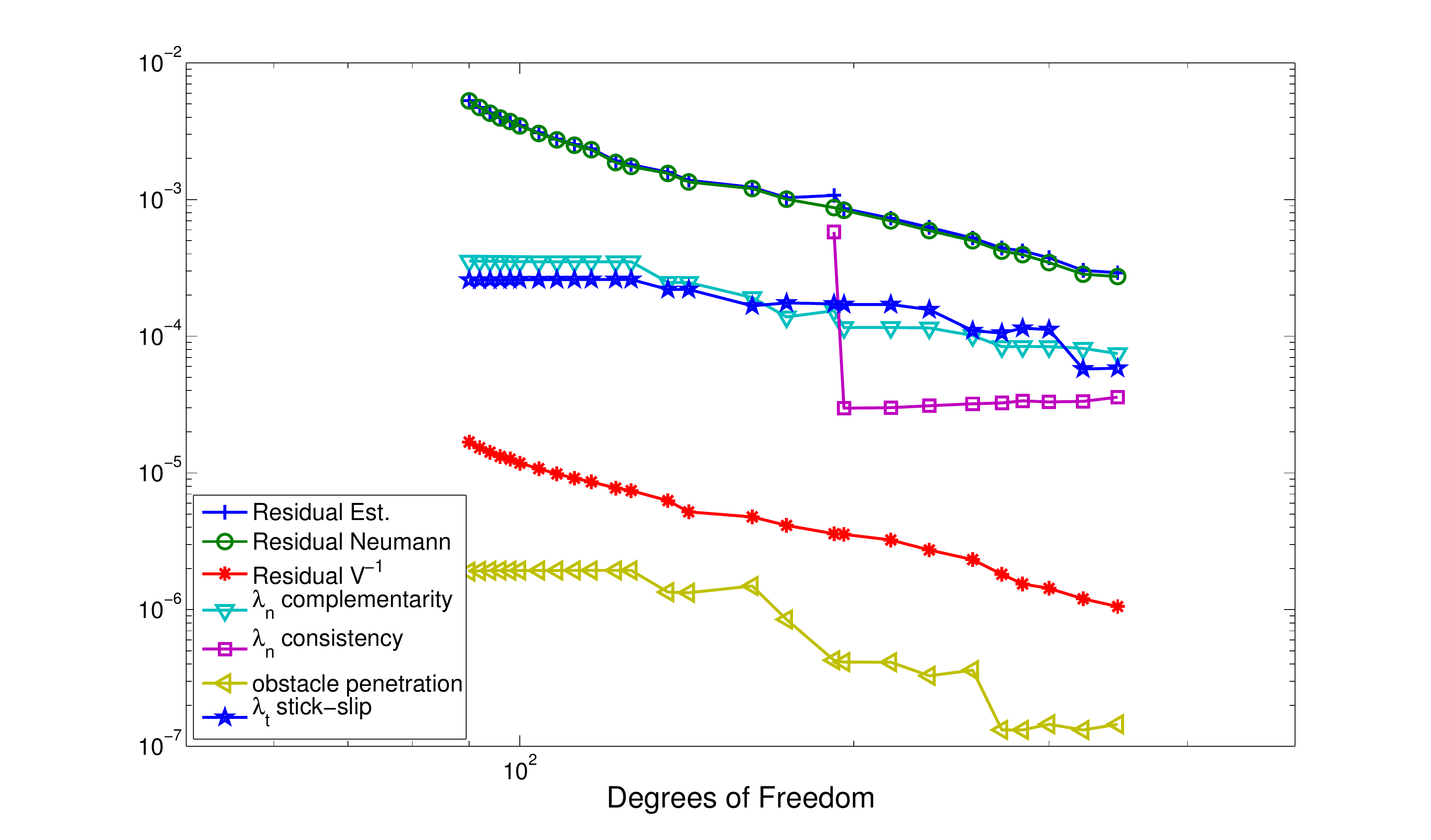}}
	
   \caption{Error contributions of the residual based error indicator (Tresca-friction) }
   \label{fig:Tresca:errorContributions}
 \end{figure}

 \subsection{Neumann boundary value problem with Coulomb-friction}
 For the following numerical experiments, the domain is $\Omega=[-\frac{1}{2},\frac{1}{2}]^2$ with $\Gamma_C=[-\frac{1}{2},\frac{1}{2}]\times \left\{-\frac{1}{2}\right\}$ and $\Gamma_N=\partial \Omega \setminus \Gamma_C$. Since no Dirichlet boundary has been prescribed, the kernel of the Steklov operator consists of the three rigid body motions $\operatorname{ker}(S)=\operatorname{span}\left\{ (x_1,0)^\top, (0,x_2)^\top, (x_2,-x_1)^\top \right\}$. Nevertheless, to obtain a unique solution the rigid body motions are forced to zero during the simulation. The material parameters are $E=5$ and $\nu=0.45$, and the Coulomb friction coefficient is $0.3$. The Neumann force is
 \begin{align*}
 t_{\text{side}}&=  \left(
 \begin{array}{c}
 	-10\operatorname{sign}(x_1)(\frac{1}{2}+x_2)(\frac{1}{2}-x_2)exp(-10(x_2+\frac{4}{10})^2)\\
 	\frac{7}{8}(\frac{1}{2}+x_2)(\frac{1}{2}-x_2)
 \end{array}\right) \\
 t_{\text{top}}&=  \left(
 \begin{array}{c}
 	0\\
 	-\frac{25}{2}(\frac{1}{2}-x_1)^2(\frac{1}{2}+x_1)^2
 \end{array}\right)
 \end{align*}
  on the side, top respectively, and the gap to the obstacle is zero. A similar example is considered in \cite{hueber2012equilibration} for FEM and the same in \cite{Banz2014BEM} for BEM with biorthogonal basis functions. The solution is characterized by a large contact set and that the Lagrangian multiplier has a kink, jump in the normal, tangential component, respectively, at $x_1=0$, Figure~\ref{fig:Coulomb:solution}.\\

Figure~\ref{fig:Coulomb:errorDiri} shows the reduction of the error indicator for different families of discrete solutions. The residual error indicator for the uniform $h$-version with $p=1$ has a convergence rate of almost $1.5$. Employing an $h$-adaptive scheme improves the convergence in the preasymptotic range but then the estimated error runs parallel to the uniform case as only quasi uniform mesh refinements are carried out, Figure~\ref{fig:Coulomb:AdaptiveMeshes} (a). If both, $h$- and $p$-refinements are carried out, the convergence rate is improved to over $2.8$ after a preasymptotic range in which only $h$-refinements are been carried out. The estimated error for the GLL- and Bernstein approach is the same even for the $hp$-adaptive case, since the basis functions for the Lagrange multiplier and the contact conditions only differ where $p\geq 2$. This however, is only the case outside the actual contact area, Figure~\ref{fig:Coulomb:AdaptiveMeshes} (b)-(c), but there $\lambda=0$ due to Coulomb's
friction law.
The error reduction and adaptivity behavior is again very different to the non-stabilized case with biorthogonal basis function \cite[Sec.~6.2]{Banz2014BEM}. There the convergence rate is larger with $1.9$ for $h$-adaptivity and $3.3$ for $hp$-adaptivity and the refinements on $\Gamma_C$ are more localized. There, the dominant error source is the slip-stick contribution, and thus explaining the local mesh refinements on $\Gamma_C$, whereas here the residual of the variational equation and the violation of the complementarity condition in $\lambda_n$ are dominant. Interestingly, here, the slip-stick contribution is the smallest non-zero error contribution and is several decades smaller then the  other remaining ones, Figure~\ref{fig:Coulomb:errorContributions}.

  \begin{figure}[tbp]
    \centering \mbox{
    \subfigure[Reference (gray), deformed (blue)]{
  	\includegraphics[trim = 10mm 7mm 10mm 8mm, clip,width=65.0mm, keepaspectratio]{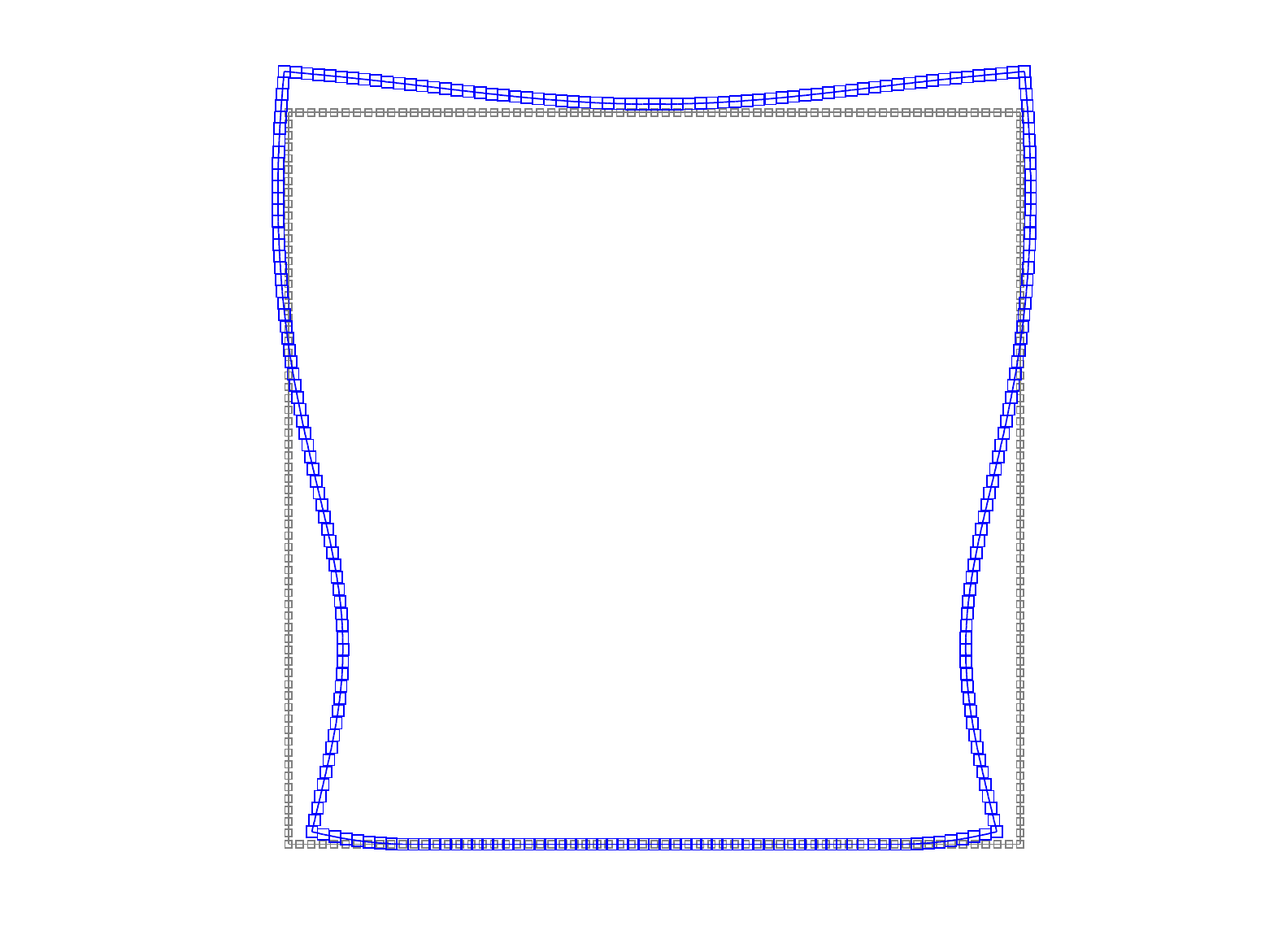}}
    \subfigure[$\lambda_n$ (blue), $\lambda_t$ (red)]{
  	\includegraphics[trim = 10mm 7mm 10mm 8mm, clip,width=65.0mm, keepaspectratio]{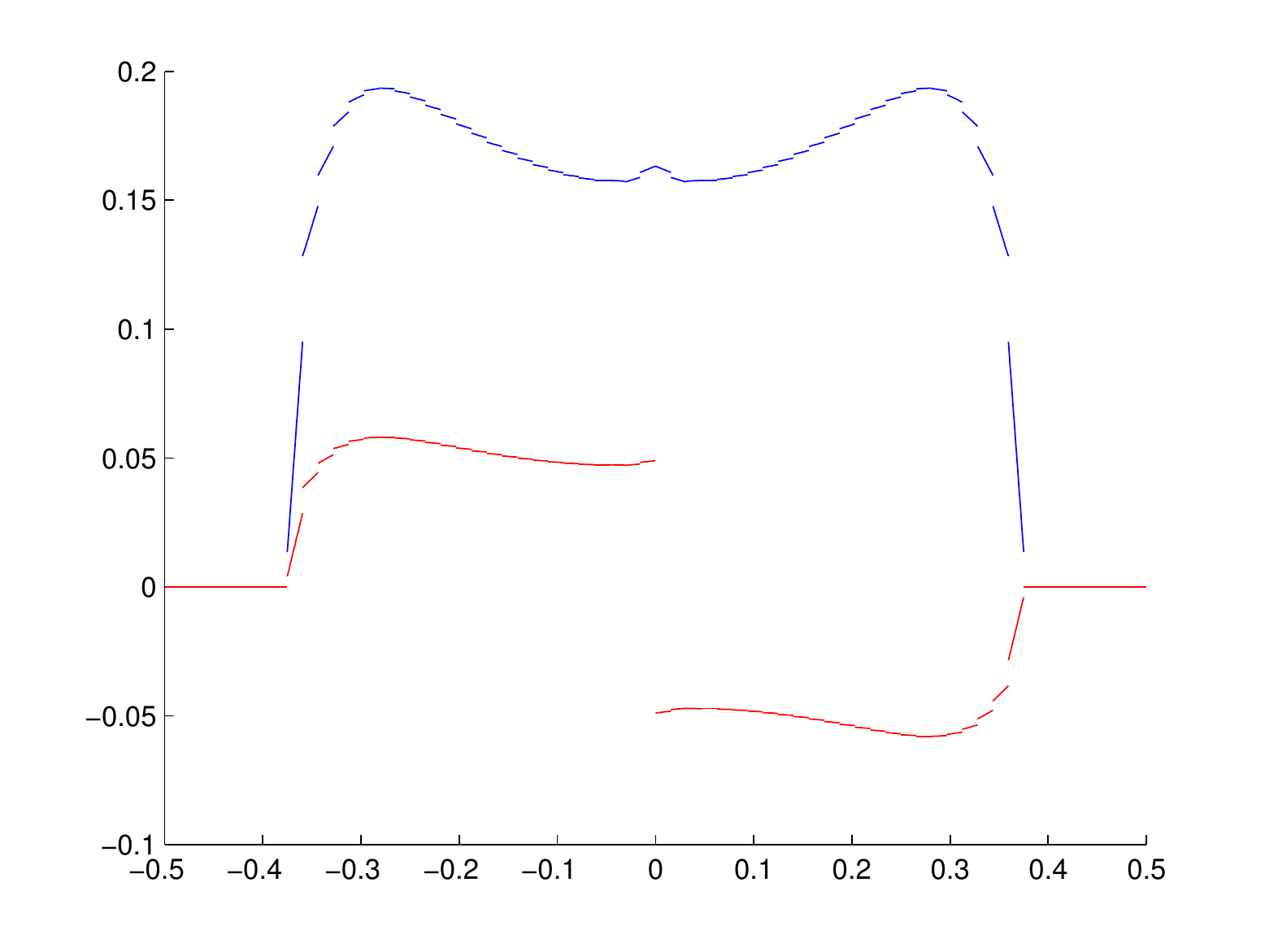}} }
    \caption{Solution of the Coulomb-frictional problem, uniform mesh 256 elements, $p=1$ }
    \label{fig:Coulomb:solution}
  \end{figure}

  \begin{figure}[tbp]
    \centering
    \includegraphics[trim = 10mm 2mm 15mm 5mm, clip,width=100.0mm, keepaspectratio]{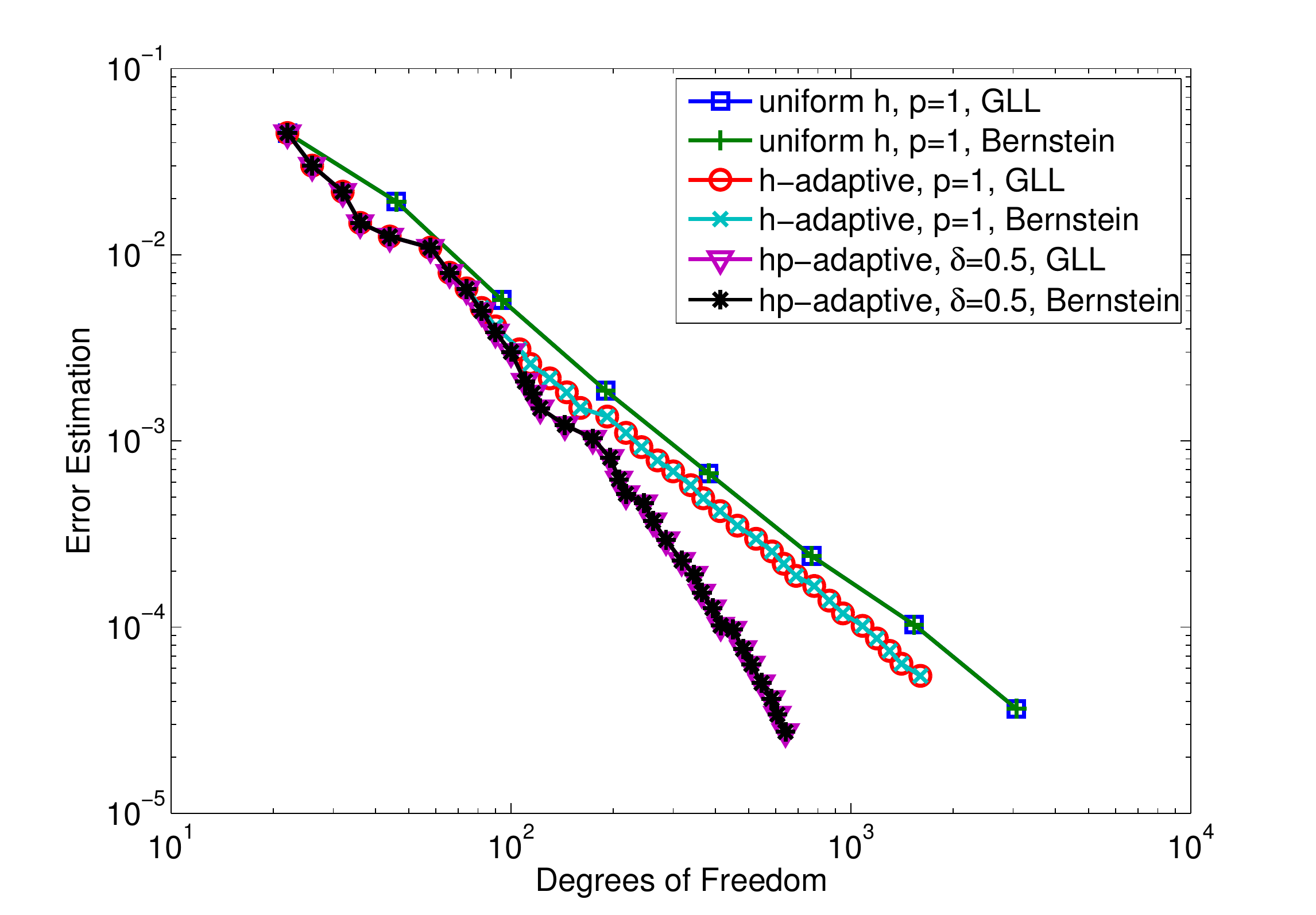}
    \caption{Error estimations for different families of discrete solutions (Coulomb-friction)}
    \label{fig:Coulomb:errorDiri}
  \end{figure}

  \begin{figure}[tbp]
    \centering \mbox{
    \subfigure[uniform $h$-version with $p=1$ (GLL/Bernstein)]{
  	\includegraphics[trim = 25mm 5mm 25mm 9mm, clip,width=75.0mm, keepaspectratio]{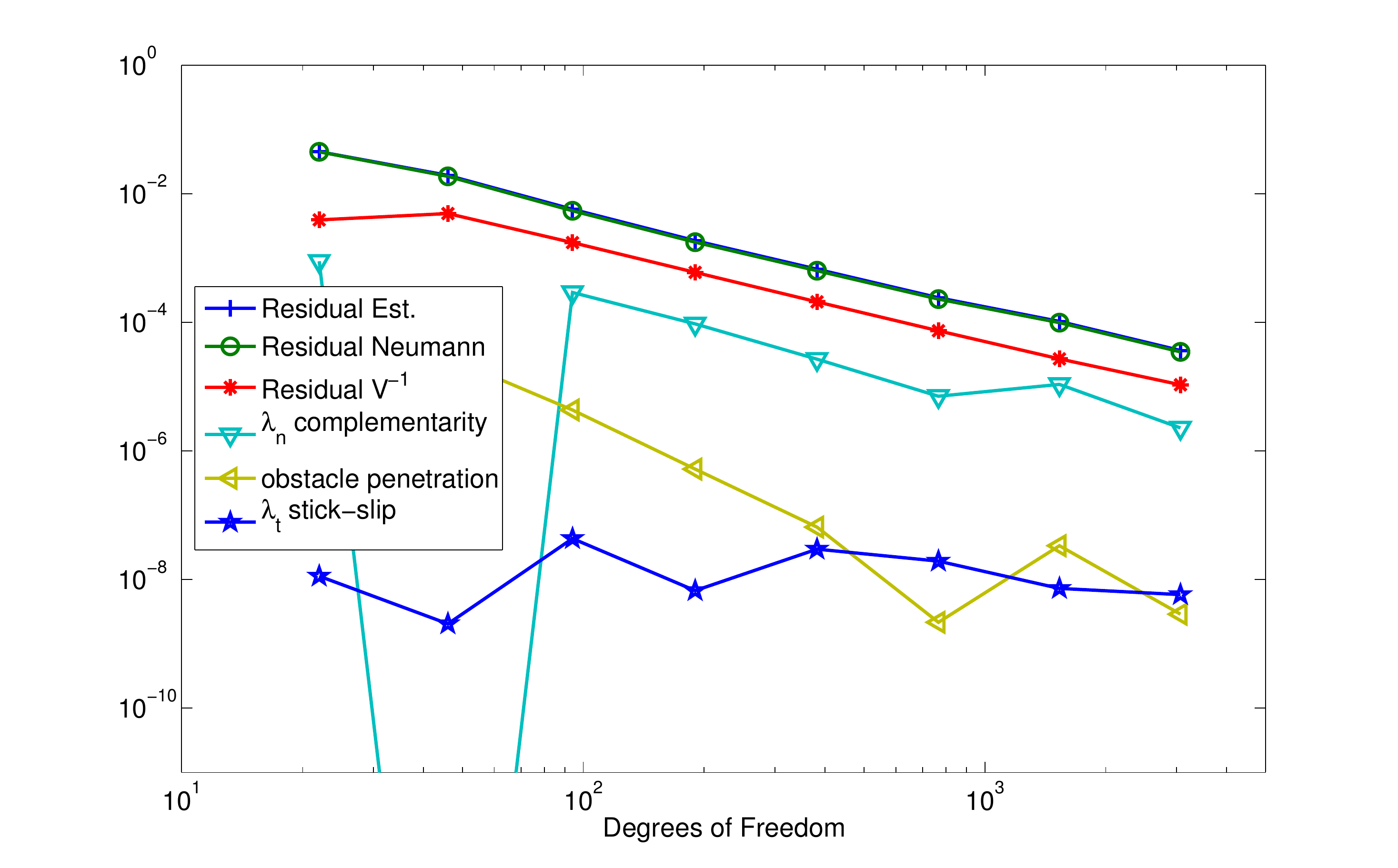}}
    \subfigure[$hp$-adaptive (GLL/Bernstein)]{
  	\includegraphics[trim = 25mm 5mm 25mm 9mm, clip,width=75.0mm, keepaspectratio]{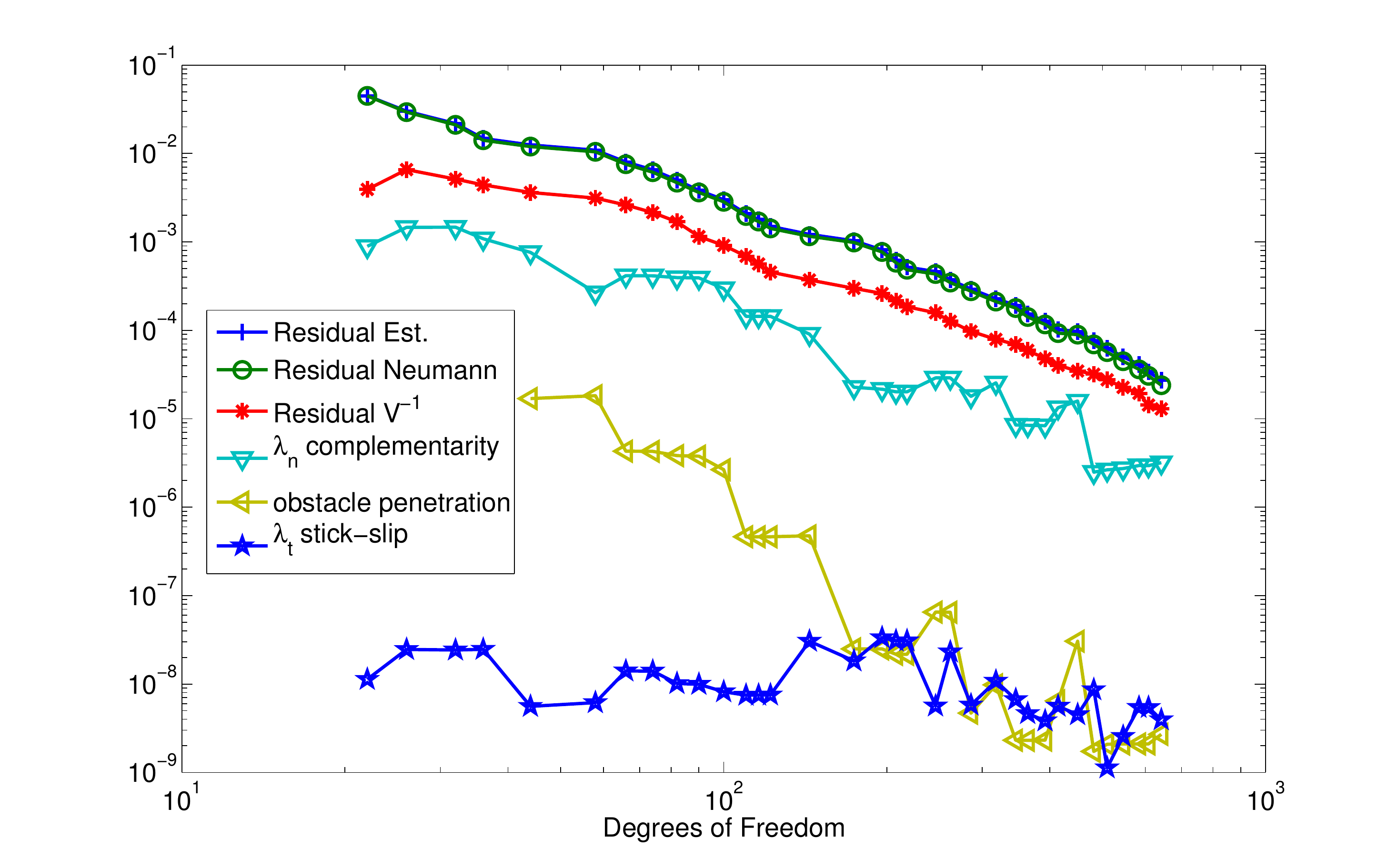}} }
    \caption{Error contributions of the residual based error indicator (Coulomb-friction) }
    \label{fig:Coulomb:errorContributions}
  \end{figure}


 \begin{figure}
   \centering   \mbox{\subfigure[$h$-adap.~(GLL/Bernstein), mesh nr.~16 (in), 25 (out)]{
   \begin{overpic}[trim = 41mm 31mm 35mm 7mm, clip,width=60.0mm, keepaspectratio]{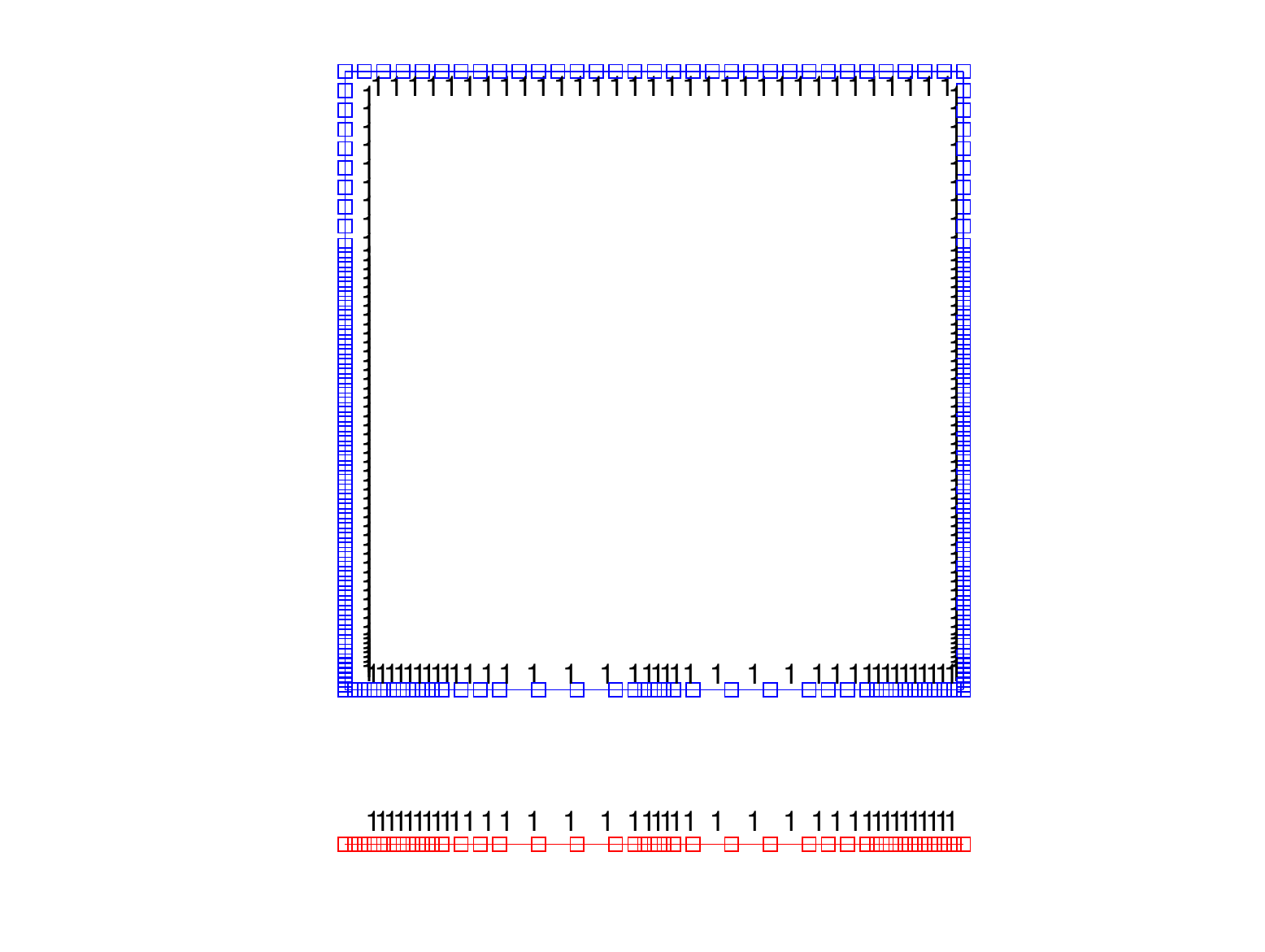}
     \put(7.1,8.4){ \includegraphics[trim = 41mm 31mm 35mm 7mm, clip, width=50.0mm, keepaspectratio]{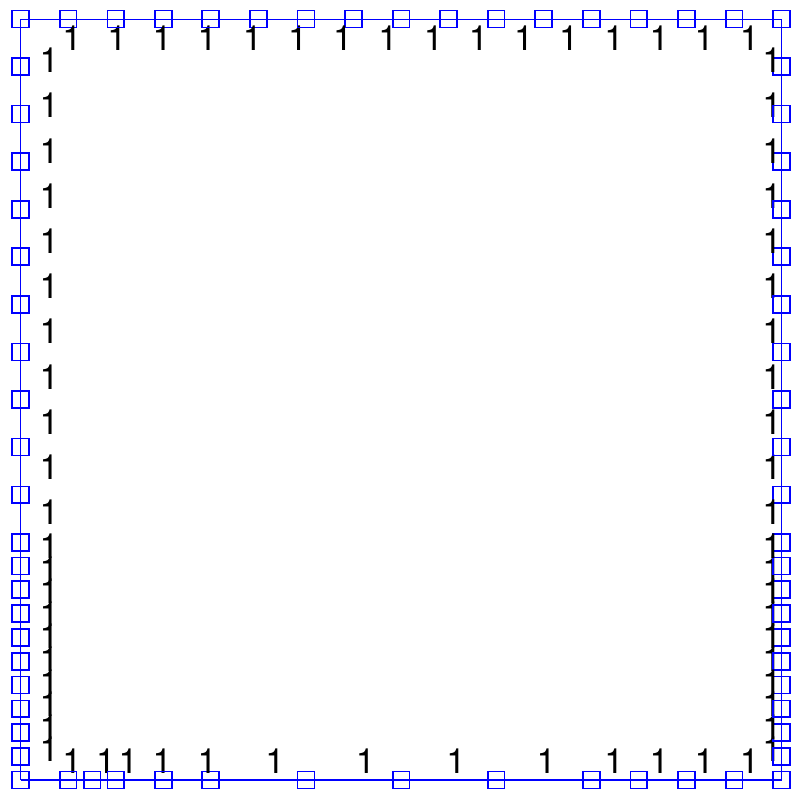}}
   \end{overpic} } \hspace{+1.5cm}
 	\subfigure[$hp$-adap.~(Bernstein), mesh nr.~16 (in), nr.~25 (out)]{
   \begin{overpic}[trim = 41mm 31mm 35mm 7mm, clip,width=60.0mm, keepaspectratio]{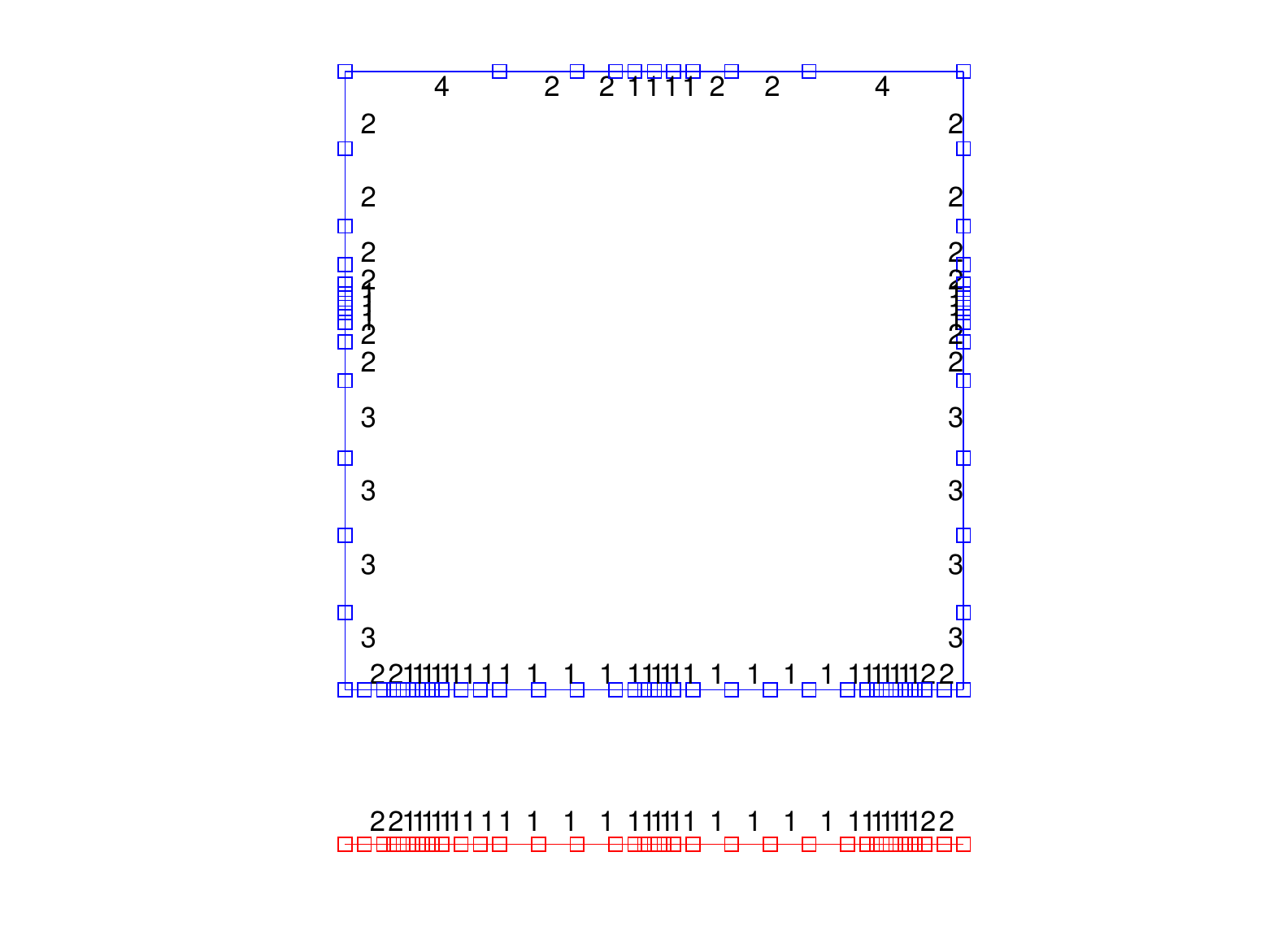}
     \put(7.1,8.4){ \includegraphics[trim = 41mm 31mm 35mm 7mm, clip, width=50.0mm, keepaspectratio]{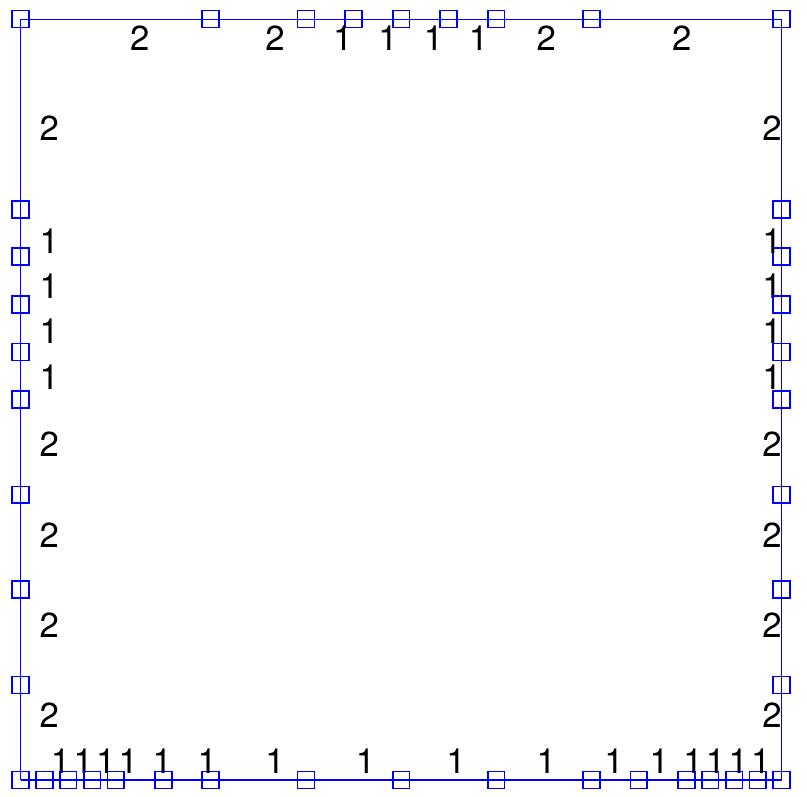}}
   \end{overpic}	} }
	
 	\subfigure[$hp$-adap.~(GLL), mesh nr.~16 (inner), nr.~25 (outer)]{
   \begin{overpic}[trim = 41mm 31mm 35mm 7mm, clip,width=60.0mm, keepaspectratio]{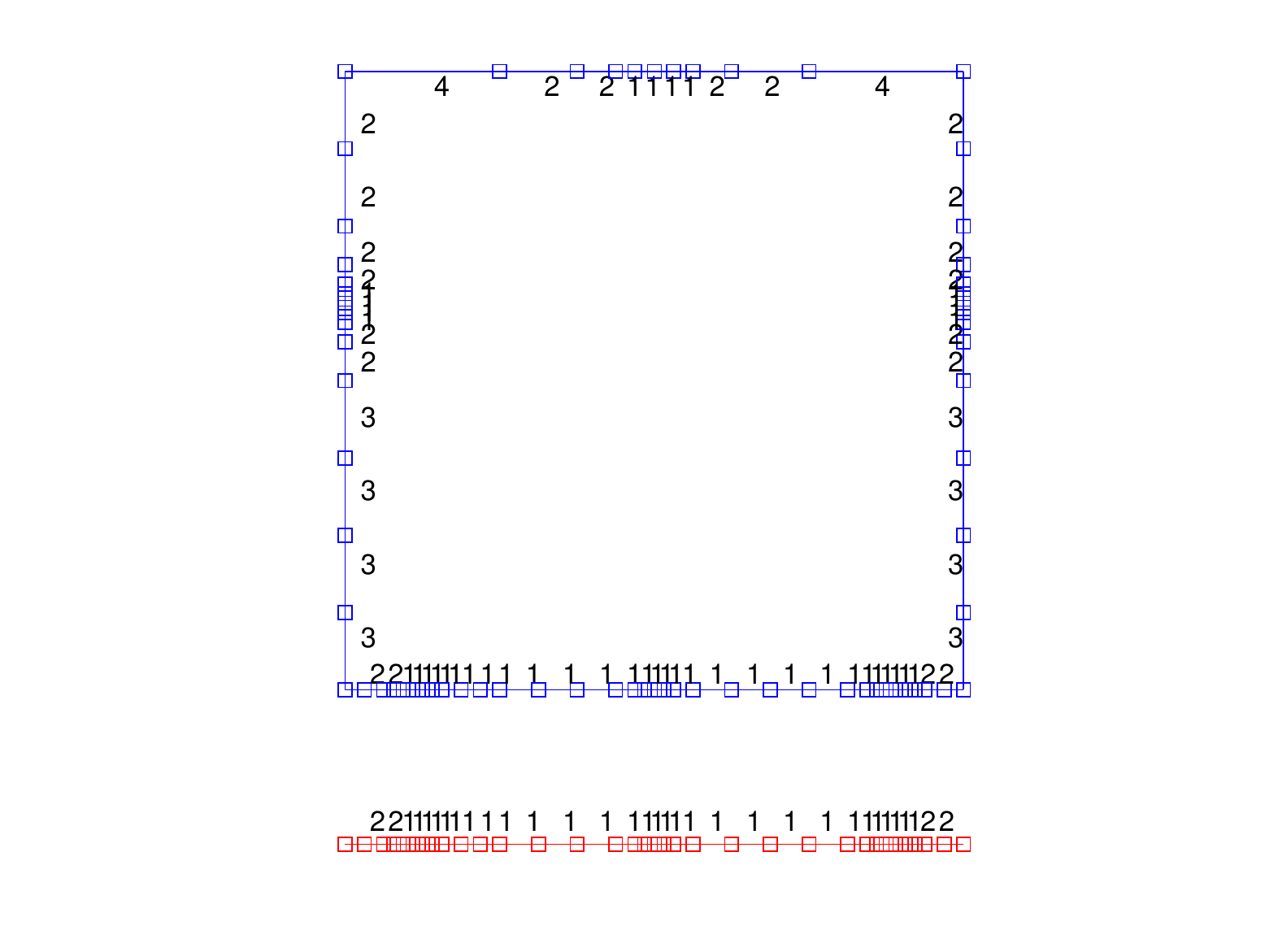}
     \put(7.1,8.4){ \includegraphics[trim = 41mm 31mm 35mm 7mm, clip, width=50.0mm, keepaspectratio]{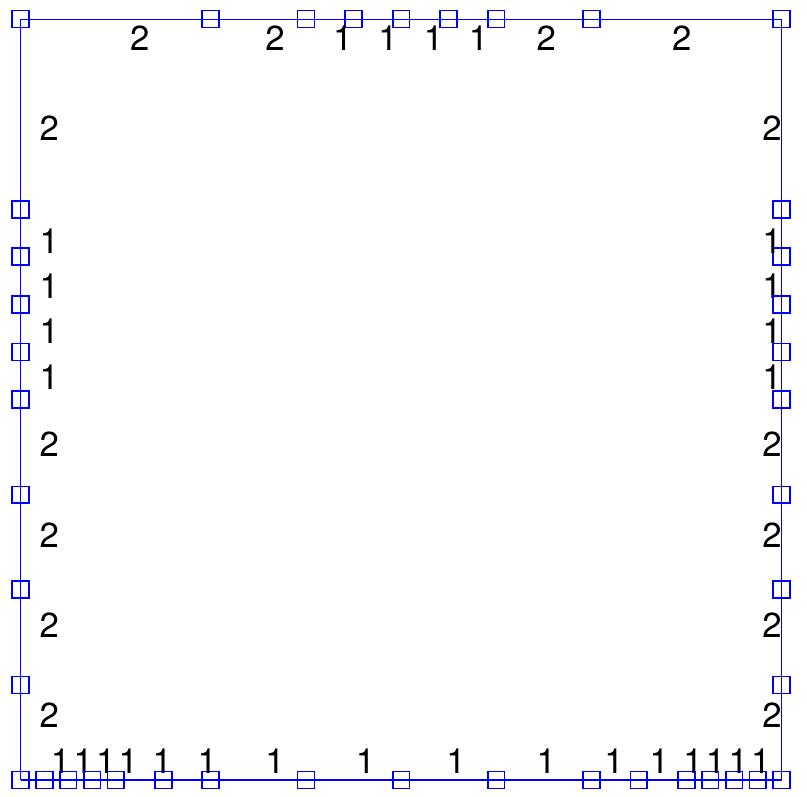}}
   \end{overpic}	}
 \caption{Adaptively generated meshes (Coulomb-friction)}
 \label{fig:Coulomb:AdaptiveMeshes}
 \end{figure}

 \subsection{Influence of the stabilization for the Neumann boundary value problem with Coulomb-friction}

From Lemma~\ref{lem:coerciveDiscrete} it is clear that if $\gamma_0$ is chosen to be too large the system matrix has at least one negative eigenvalue and the entire theory may no longer hold. Figure~\ref{fig:gamma0_Test:errorCoul} shows the error estimation for a uniform mesh with $256$ elements and $p=1$ versus $\gamma_0$. In all cases the iterative solver converges to a solution of the discrete problem. But for $\gamma_0 \geq 0.152$ the system matrix has a negative eigenvalue and the discrete solution looks unphysical or even simply useless. Interestingly, the error estimation captures this partly, the red curve in Figure~\ref{fig:gamma0_Test:errorCoul}, even though the error estimation may not be an upper bound of the discretization error. Once $\gamma_0$ is sufficiently small, here $1.9 \cdot 10^{-12} \leq \gamma_0 \leq 6.6 \cdot 10^{-2}$, there is (almost) no dependency on the absolute value of $\gamma_0$ itself, neither in the error estimate nor in the discrete solution itself. Only if $\gamma_0$ is
further decreased, i.e.~the stabilization is effectively switched off, the Lagrangian multiplier starts to oscillate as it is typical for the non-stabilized case, when using the same mesh and polynomial degree for $u^{hp}|_{\Gamma_C}$ and $\lambda^{kq}$ and no special basis functions. This is captured by the increase in the error estimation.\\

 \begin{figure}[tbp]
    \centering
    \includegraphics[trim = 4mm 1mm 18mm 5mm, clip,width=80.0mm, keepaspectratio]{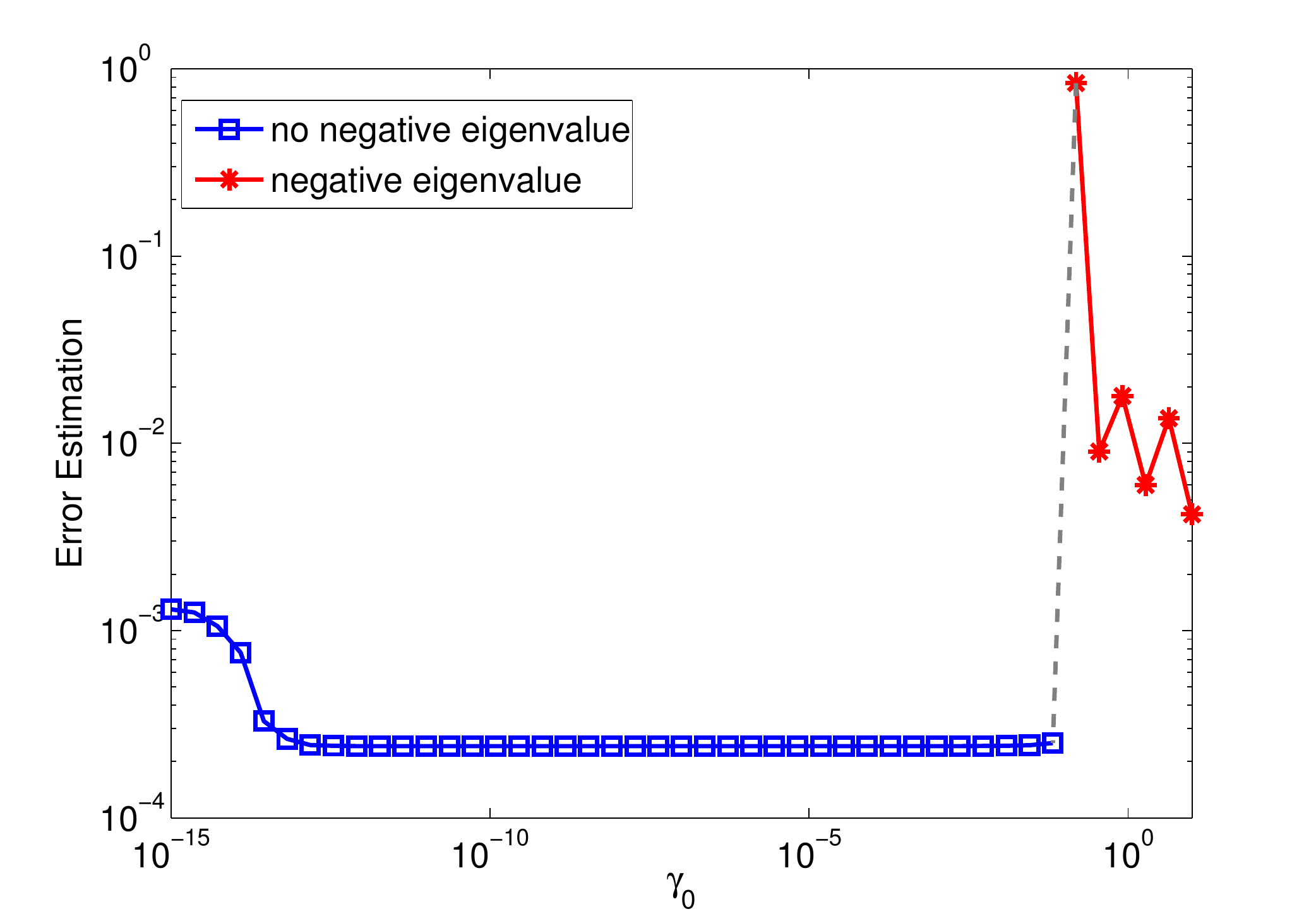}
    \caption{Dependency of error estimation on $\gamma_0$ for uniform mesh with 256 elements and $p=1$ (Coulomb-friction)}
    \label{fig:gamma0_Test:errorCoul}
  \end{figure}

Within the simulation, the most time consuming contribution is the computation of the matrices $\widehat{\mathbf{WW}}$, $\widehat{\mathbf{K^\top K^\top}}$ and $\widehat{\mathbf{WK^\top}}$ for the stabilization matrix $\hat{\mathbf{S}}$. Since $\gamma_0$ is allowed to be very small it may be favorable to compute these matrices only approximately. In the following we replace $\widehat{\mathbf{WW}}$ by $\bar{\mathbf{W}}^\top \mathbf{M}_D^{-\top} \mathbf{M}_\gamma \mathbf{M}_D^{-1} \bar{\mathbf{W}}$ where
\begin{align}
  \left(\mathbf{M}_\gamma\right)_{i,j}:=\left\langle \gamma \phi_j, \phi_i\right\rangle_{\Gamma_C}, \quad
	\left(\mathbf{M}_D\right)_{i,j}:=\left\langle \phi_j, \phi_i\right\rangle_{\Gamma_\Sigma}, \quad
	\left(\bar{\mathbf{W}}\right)_{i,j}:=\left\langle W\varphi_j, \phi_i\right\rangle_{\Gamma_\Sigma}, \quad
	\left(\bar{\mathbf{K}}^\top\right)_{i,j}:=\left\langle K^\top \phi_j, \phi_i\right\rangle_{\Gamma_\Sigma}
\end{align}
with $\operatorname{span} \left\{\phi_i\right\}_i = \mathcal{V}_{hp+1}^D$ and $\operatorname{span} \left\{\varphi_i\right\}_i = \mathcal{V}_{hp}$. In particular $\mathbf{M}_D$ is only a block-diagonal matrix and thus its inverse is cheap. The difference to the original formulation in Section~\ref{sec:Implementation} is in an intermediate projection of $Wu^{hp}$, $Wv^{hp}$ onto the discontinuous finite element space $\mathcal{V}^D_{hp+1}$. Analogously, the matrices $\widehat{\mathbf{K^\top K^\top}}$, $\widehat{\mathbf{WK^\top}}$ are replaced by $(\bar{\mathbf{K}}^\top)^\top \mathbf{M}_D^{-\top} \mathbf{M}_\gamma \mathbf{M}_D^{-1} \bar{\mathbf{K}}^\top$, $(\bar{\mathbf{K}}^\top)^\top \mathbf{M}_D^{-\top} \mathbf{M}_\gamma \mathbf{M}_D^{-1} \bar{\mathbf{W}}$, respectively. Even though four instead of three matrices must now be computed, only two potentials (due to elementwise partial integration of $W$) must be evaluated and thus this is significantly faster.

Figure~\ref{fig:Stab_aaprox:errorCoul} shows the decay of the error estimation for the uniform $h$ version with $p=1$ and for the $hp$-adaptive scheme with Gauss-Lobatto-Lagrange basis functions when using the above approximation of the stabilization matrix. For comparison the corresponding curves from Figure~\ref{fig:Coulomb:errorDiri} are also depicted. The difference in the error estimation for the original stabilization approach and its approximation is $\pm 0.014\%$ for the uniform $h$-version with $p=1$ and $\pm 0.02\%$ for the $hp$-adaptive scheme. Hence, contact stabilized BEM is suitable for practical applications.

 \begin{figure}[tbp]
    \centering
    \includegraphics[trim = 4mm 1mm 18mm 5mm, clip,width=85.0mm, keepaspectratio]{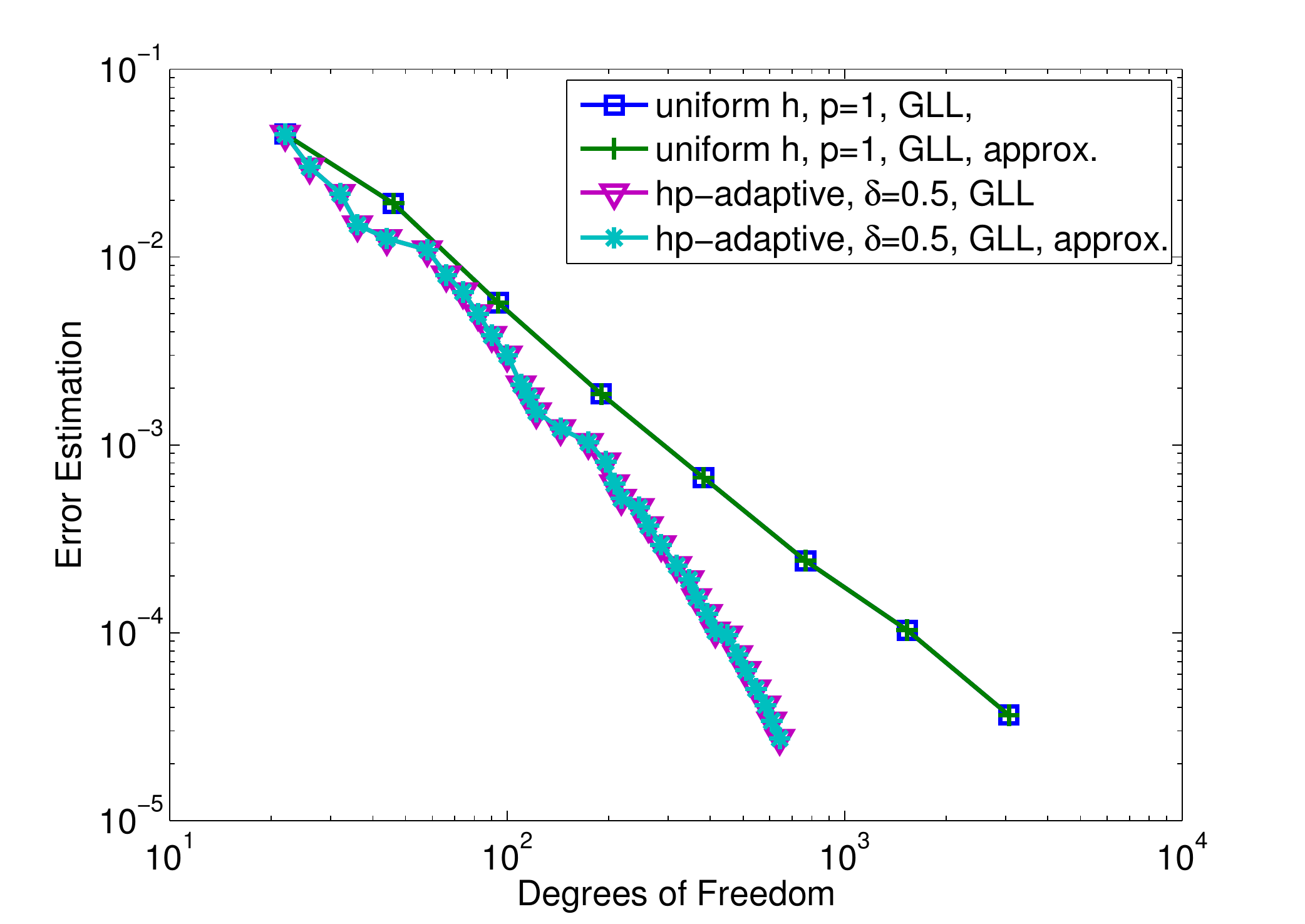}
    \caption{Error estimations for families of discrete solutions with and without approximation of the stabilization matrix $\hat{\mathbf{S}}$ (Coulomb-friction)}
    \label{fig:Stab_aaprox:errorCoul}
  \end{figure}

\noindent \emph{Acknowledgments:} H.G.~thanks the Danish Science Foundation (FNU) for partial support through research grant 10-082866.

%
%
%
%
%
%
%

  \bibliographystyle{abbrv}
  \bibliography{BE_kontakt_arxiv}

%
%
%
%
%
%

\end{document}